\numberwithin{equation}{section}
\newtheorem{theorem}{Theorem}[section]
\newtheorem{lemma}[theorem]{Lemma}
\newtheorem{prop}[theorem]{Proposition}
\newtheorem{defin}[theorem]{Definition}
\theoremstyle{remark}
\newtheorem{rem}[theorem]{Remark}
\def\R{\mathbb{R}}
\def\U{\mathcal{U}}
\def\Da{\Lambda^\alpha}
\newcommand{\rf}[1]{{\rm (}\ref{#1}{\rm )}}
\def\Proof{\begin{proof}}
\def\endProof{\end{proof}}
\def\ue{u^\varepsilon}
\def\eps{\varepsilon}
\begin{document}
%%%%%%%%%%%%%%%%%%%%%%%%%%%%%%%%%%%%
%\baselineskip=18.5pt
%%%%%%%%%%%%%%%%%%%%%%%%%%%%%%%%%%%%

\title[Fractal Burgers equation]
{Asymptotic properties of entropy solutions to fractal Burgers equation}

\author{Nathael Alibaud}

\address{N. Alibaud: Laboratoire de Math\'ematiques de Besan\c{c}on,
  UMR CNRS 6623, Universit\'e de Franche-Comt\'e, UFR Sciences et techniques, 
16 route de Gray, 25030
  Besan\c con cedex, France} \email{Nathael.Alibaud@ens2m.fr}
\urladdr{http://www-math.univ-fcomte.fr/pp\_Annu/NALIBAUD}

\author{Cyril Imbert}

\address{
C. Imbert: CEREMADE, UMR CNRS 7534, Universit\'e Paris-Dauphine, 
Place de Lattre de Tassigny, 75775 Paris cedex
16, France}

\email{imbert@ceremade.dauphine.fr}
\urladdr{http://www.ceremade.dauphine.fr/~imbert}

\author{Grzegorz Karch}

\address{G. Karch: Instytut Matematyczny,
Uniwersytet Wroc{\l}awski, 
 pl.~Grunwaldzki 2/4, 50--384 Wroc{\l}aw, Poland}
\email{\tt Grzegorz.Karch@math.uni.wroc.pl}
\urladdr{http://www.math.uni.wroc.pl/~karch}

\begin{abstract} 
  We study properties of solutions of the initial value problem for
  the nonlinear and nonlocal equation $u_t+(-\partial^2_x)^{\alpha/2}
  u+uu_x=0$ with $\alpha\in (0,1]$, supplemented with an initial datum
  approaching the constant states $u_\pm$ ($u_-<u_+$) as $x\to
  \pm\infty$, respectively.  It was shown by Karch, Miao \& Xu (SIAM
  J. Math. Anal. {\bf 39} (2008), 1536--1549) that, for $\alpha\in
  (1,2)$, the large time asymptotics of solutions is described by
  rarefaction waves.  The goal of this paper is to show that the
  asymptotic profile of solutions changes for $\alpha\leq 1$.  If
  $\alpha=1$, there exists a self-similar solution to the equation
  which describes the large time asymptotics of other solutions.  In
  the case $\alpha \in (0,1)$, we show that the nonlinearity of the
  equation is negligible in the large time asymptotic expansion of
  solutions.
\end{abstract}

\subjclass[2000]{35K05, 35K15}

\keywords{fractal Burgers equation, asymptotic behavior of solutions,
  self-similar solutions, entropy solutions}

\date{\today}

\thanks{The authors would like to thank the referee for suggestions
  that improved significantly the presentation of the results.  The
  first author would like to thank the Department of Mathematics
  Prince of Songkla University (Hat Yai campus, Thailand) for having
  ensured a large part of his working facilities.  The second author
  was partially supported by the ANR project ``EVOL''.  The work of
  the third author was partially supported by the European Commission
  Marie Curie Host Fellowship for the Transfer of Knowledge ``Harmonic
  Analysis, Nonlinear Analysis and Probability'' MTKD-CT-2004-013389,
  and by the Polish Ministry of Science grant N201 022 32/0902.  The
  authors were also partially supported by PHC-Polonium project no
  20078TL "Nonlinear evolutions equations with anomalous diffusions''.
}
%%%%%%%%%%%%%%%%%%%%%%%%%%%%%%%%%%%%%%%%%%%%%%%%%%%%%%%%%%%%%%%%%%%%

\maketitle

\section{Introduction}

In this work, we continue the study of asymptotic properties of solutions
of the Cauchy problem for the following nonlocal conservation law
\begin{eqnarray}\label{e1.1}
&&u_t+\Da u +uu_x=0, \quad x\in\R,\; t>0,\\
\label{e1.2} &&u(0,x)=u_0(x),
\end{eqnarray}
where
$\Da=(-\partial^2/\partial x^2)^{\alpha/2}$
is the pseudodifferential operator defined via the Fourier
transform $\widehat {(\Da v)}(\xi)= |\xi|^\alpha \, \widehat v(\xi).$
This equation is referred to as the fractal Burgers equation.

The initial datum~$u_0 \in L^\infty(\R)$ is assumed to satisfy:
\begin{equation}\label{ini as}
\exists u_-<u_+  \mbox{ with }
u_0-u_-\in L^1(-\infty,0) \mbox{ and }
 u_0-u_+\in L^1(0,+\infty)
\end{equation}
(where~$u_\pm$ are real numbers).
An interesting situation is where~$u_0 \in BV(\R)$, that is to say 
\begin{equation}\label{u0}
u_0(x)=c+\int_{-\infty}^x m(dy)
\end{equation}
with $c \in \R$ and a finite signed measure~$m$ on $\R$. In that case 
Jourdain, M\'el\'eard, and Woyczy\'nski
\cite{JMW,JMW1} have recently given a probabilistic interpretation to 
problem~\rf{e1.1}--\rf{e1.2}. Assumption~\eqref{ini as} holds true when
\begin{equation}\label{u:pm}
u_-=c \quad \mbox{and} \quad  u_+-u_-=\int_{\R} m(dx)>0.
\end{equation}
If $c=0$ and if $m$ is a probability measure, the function
$u_0$ defined in \rf{u0} is the {\it cumulative distribution
  function} and this property is shared by the solution $u(t) \equiv u(\cdot,t)$ for
every $t>0$ (see~\cite{JMW,JMW1}).  As a consequence of our results,
we describe the asymptotic behavior of the family $\{u(t)\}_{t\geq 0}$
of probability distribution functions as $t\to+\infty$ (see the summary
at the end of this section).

It was shown in \cite{kmx} that, under assumptions \rf{ini as}--\rf{u:pm} and for $1<\alpha \leq 2$, the large time asymptotics of
solution to \rf{e1.1}--\rf{e1.2} is described by the so-called {\it
  rarefaction waves}. The goal of this paper is to complete these
results and to obtain  universal asymptotic profiles of solutions for
$0<\alpha\leq 1$.

%%%%%%%%%%%%%%%%%%%%%%%%%%%%%%%%%%%%%%%%%%%%%%%%%%%%%%%%%%%%%%%%%%%%%%%%%%%%
\subsection{Known results}
Let us first  recall the results obtained in \cite{kmx}.
For~$\alpha \in (1,2]$, the initial value problem for the fractal
Burgers equation \rf{e1.1}--\rf{e1.2} with $u_0\in L^\infty(\R)$ has
a unique, smooth, global-in-time solution  ({\it cf.}
{\cite[Thm.~1.1]{dgv}}, {\cite[Thm.~7]{dri}}).  If,
moreover, the initial datum is of the form \rf{u0} and satisfies
\rf{ini as}--\rf{u:pm}, the corresponding solution $u$
behaves asymptotically when $t\to+\infty$ as the rarefaction wave ({\it cf.}
\cite[Thm.~1.1]{kmx}). More precisely, for every $p\in
(\frac{3-\alpha}{\alpha-1}, +\infty]$ there exists a constant $C>0$ such that for
all $t>0$,
\begin{equation}\label{asymp:raref}
\|u(t)-w^R(t)\|_{p}\leq
Ct^{-\frac12[\alpha-1-\frac{3-\alpha}{p}]}\log (2+t)
\end{equation}
($\|\cdot\|_p$ is the standard norm in $L^p(\R)$).
Here, the rarefaction wave 
 is the explicit (self-similar) function
\begin{equation}\label{rarefaction}
w^R(x,t)=w^R \left(\frac{x}{t},1\right) \equiv \left\{
\begin{aligned}
&u_-\,,\quad &&\frac{x}t\leq u_-,\\
&\frac{x}t\,,\quad &&u_-\leq \frac{x}t\leq u_+, \\
& u_+\,,\quad &&\frac{x}t\geq u_+.
\end{aligned}
\right.
\end{equation}
It is well-known that $w^R$ is the unique entropy solution of the
 Riemann problem for the nonviscous Burgers equation
$w^R_t+w^R w^R_x=0$.

The goal of the work is to show that, for $\alpha\in (0,1]$, one
should expect completely different asymptotic profiles of
solutions. Let us notice that the initial value problem
\rf{e1.1}--\rf{e1.2} has a unique global-in-time entropy solution
for every $u_0\in L^\infty(\R)$ and $\alpha\in (0,1]$ due to the
recent work by the first author \cite{A07}. We recall this result in Section
\ref{sec:entropy}.

\subsection{Main results.}
Our two main results are Theorems~\ref{thm:a<1} and
\ref{thm:a=1}, stated below.  Both of them are a consequence of the
following $L^p$-estimate of the difference of two entropy solutions.
%----------------------------------------------------------------
\begin{theorem}
\label{thm:as-stab-fb} Let $0< \alpha \leq 1$.
  Assume that $u$ and $\widetilde u$ are two entropy solutions of
  \rf{e1.1}--\rf{e1.2} with initial conditions $u_0$ and $\widetilde
  u_0$ in~$L^\infty(\R)$. Suppose, moreover, that~$\widetilde u_0$ is non-decreasing
  and $u_0-\widetilde u_0\in
  L^1(\R)$.  Then there exists a constant~$C=C(\alpha)>0$ such that for all~$p\in [1,{+\infty}]$ 
  and all $t>0$
\begin{equation}\label{ineq:stab}
\|u(t)-\widetilde u(t)\|_p\leq Ct^{-\frac1\alpha(1-\frac1p)}\|u_0-\widetilde u_0\|_1.
\end{equation}
\end{theorem}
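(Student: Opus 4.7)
My strategy follows the classical ``$L^1$-contraction plus $L^1$-$L^\infty$ smoothing plus interpolation'' paradigm for diffusion-dominated equations. The exponent $t^{-\frac{1}{\alpha}(1-1/p)}$ in \eqref{ineq:stab} is exactly the $L^p$-decay rate of the free fractional heat kernel of $\Lambda^\alpha$, which tells us heuristically that the nonlinearity, on the difference of two solutions, should behave \emph{as if it were not there}, provided the hypotheses allow us to tame the quadratic term.

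The endpoint $p=1$ is handled by the Kruzhkov-type $L^1$-contraction for entropy solutions of the fractal Burgers equation established in \cite{A07}, namely $\|u(t)-\widetilde u(t)\|_1 \leq \|u_0-\widetilde u_0\|_1$. The intermediate cases $1<p<\infty$ will follow by the interpolation inequality $\|f\|_p \leq \|f\|_1^{1/p}\|f\|_\infty^{1-1/p}$, which reproduces the claimed exponent exactly. So the whole point of the proof is to establish the endpoint smoothing estimate
\begin{equation*}
\|u(t)-\widetilde u(t)\|_\infty \leq C\, t^{-1/\alpha}\,\|u_0-\widetilde u_0\|_1.
\end{equation*}

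To attack $p=\infty$, I would exploit the monotonicity hypothesis on $\widetilde u_0$: by translation invariance and comparison it is propagated in time, so $\widetilde u(\cdot,t)$ is non-decreasing, and the parabolic smoothing available for $0<\alpha\leq 1$ makes $\widetilde u$ classically smooth with $\widetilde u_x\geq 0$. Setting $w = u-\widetilde u$, the difference formally obeys
\begin{equation*}
w_t + \Lambda^\alpha w + \partial_x\bigl(\widetilde u\, w + \tfrac{1}{2}\, w^2\bigr) = 0.
\end{equation*}
To extract the $t^{-1/\alpha}$ rate I would either (i) build the linear propagator $S(t,s)$ associated to $\partial_t+\Lambda^\alpha+\partial_x(\widetilde u\,\cdot)$, prove an $L^1$-$L^\infty$ ultracontractivity estimate at rate $(t-s)^{-1/\alpha}$ (using $\widetilde u_x\geq 0$ as a dissipative zero-order contribution and the boundedness and smoothness of the drift $\widetilde u$), expand $w$ via Duhamel's formula, and close by a bootstrap using the $L^1$-contraction from Step~1; or (ii) adapt the Kruzhkov doubling of variables with a test function concentrating on the diagonal at scale $t^{1/\alpha}$, trading the $L^1$ mass of $u_0-\widetilde u_0$ for the $L^\infty$ rate.

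The hard part is precisely this endpoint smoothing. For $\alpha\in(0,1)$ the equation is $L^1$-supercritical and shocks may form for general initial data, so any perturbative treatment of $\tfrac{1}{2}\partial_x(w^2)$ must be combined carefully with the $L^1$-contraction and with the good sign of $\widetilde u_x$. A naive energy computation multiplying by $\sgn(w)$ runs into the uncontrolled sign of $u_x$, while the doubling-of-variables approach requires delicate control of the nonlocal commutators produced by $\Lambda^\alpha$ acting on a test function localised at scale $t^{1/\alpha}$. This is where I expect the bulk of the technical work to lie.
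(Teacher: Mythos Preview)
Your overall structure (contraction at $p=1$, an $L^\infty$ endpoint, interpolation) is sound, but the route you sketch to the endpoint is not the one taken in the paper, and the specific mechanisms you propose run into real obstacles.

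First, a factual issue: the assertion that ``the parabolic smoothing available for $0<\alpha\leq 1$ makes $\widetilde u$ classically smooth'' is not justified. For $\alpha<1$ the operator $\Lambda^\alpha$ does not by itself provide enough smoothing to upgrade a general $L^\infty$ entropy solution to a classical one, and even if one could argue it for the monotone $\widetilde u$, the other solution $u$ may carry shocks, so the formal equation you write for $w=u-\widetilde u$ is not available at the level of entropy solutions. Both your options (Duhamel with a variable-coefficient propagator, or doubling of variables with $t^{1/\alpha}$-scaled test functions) would have to be made rigorous in this low-regularity setting, and neither is standard.

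The paper avoids all of this by \emph{parabolic regularization}: one adds $-\varepsilon u_{xx}$, obtains smooth approximations $u^\varepsilon,\widetilde u^\varepsilon$, proves \eqref{ineq:stab} uniformly in $\varepsilon$, and then passes to the limit using the convergence $u^\varepsilon\to u$ in $C([0,T];L^1_{loc})$. At the regularized level the argument is a clean energy computation: setting $v=u^\varepsilon-\widetilde u^\varepsilon$ and multiplying the equation for $v$ by $|v|^{p-2}v$, the monotonicity hypothesis gives $\widetilde u^\varepsilon_x\geq 0$, which makes the cross term $\int \widetilde u^\varepsilon_x |v|^p\,dx$ have the \emph{right sign} and drop out. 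One is left with the differential inequality
\[
\int_\R v_t\,|v|^{p-2}v\,dx+\int_\R |v|^{p-2}v\,\Lambda^\alpha v\,dx\leq 0,\qquad p\geq 2.
\]
The decay in \eqref{ineq:stab} is then extracted not by a single $L^1$--$L^\infty$ step but by a Nash--Moser type iteration (Lemma~\ref{KMX}): the Stroock--Varopoulos inequality converts the second term into $\|\Lambda^{\alpha/2}|v|^{p/2}\|_2^2$, the Nash inequality closes a differential inequality for $\|v\|_{2^{n+1}}$ in terms of $\|v\|_{2^n}$, and an induction on $n$ followed by $n\to\infty$ gives the $L^\infty$ rate. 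Interpolation then yields all $p$.

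So the gap in your plan is twofold: you are missing the regularization step that makes the analysis classical, and the mechanism that actually produces the $t^{-1/\alpha}$ rate is the Stroock--Varopoulos/Nash iteration on the energy inequality, not an ultracontractivity bound for a linear propagator or a scaled doubling argument. The role of the monotonicity of $\widetilde u_0$ is exactly to make the bad convective term sign-definite in the $L^p$ energy estimate; once you see that, the rest is the standard Nash machinery.
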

%-------------------------------------------------------------------
\begin{rem}
\begin{enumerate}
\item It is worth mentioning that this estimate is sharper than the
one obtained by interpolating the $L^1$-contraction principle and $L^\infty$-bounds on the solutions.
\item Mention also that this result holds true for~$\alpha \in (1,2]$ 
without additional~$BV$-assumption on~$u_0$. Consequently, as an immediate 
corollary of~\eqref{asymp:raref} and~\eqref{ineq:stab}, 
one can slightly complete the results from~\cite{kmx}. 
More precisely, let~$\alpha \in (1,2]$,~$u_0 \in L^\infty(\R)$
satisfying~\eqref{ini as} and~$u$ be the solution
to~\eqref{e1.1}--\eqref{e1.2}. Then for every $p\in
(\frac{3-\alpha}{\alpha-1}, +\infty]$ there exists a constant $C>0$ such that for
all $t>0$
$$
\|u(t)-w^R(t)\|_{p}\leq
Ct^{-\frac12[\alpha-1-\frac{3-\alpha}{p}]}\log (2+t)+Ct^{-\frac1\alpha(1-\frac1p)},
$$
even if~$u_0 \notin BV(\R)$.
\end{enumerate}
\end{rem}

In the case $\alpha<1$, the linear part of the fractal Burgers equation
dominates the nonlinear one for large times. In the case $\alpha=1$, 
both parts are balanced; indeed, self-similar solutions exist. Let us
be more precise now.

For $\alpha<1$, the Duhamel principle (see equation
\rf{duh:eps} below) shows that the nonlinearity in 
equation
\rf{e1.1} is
negligible in the asymptotic expansion of solutions.
%-----------------------------------------------------------Technical remark on the~$BV$ assumption------
\begin{theorem}{\bf (Asymptotic behavior as the linear part)} \label{thm:a<1}\\
  Let $0<\alpha<1$ and~$u_0 \in L^\infty(\R)$ satisfying~\eqref{ini as}. 
  Let~$u$ be the entropy solution to
  \rf{e1.1}--\rf{e1.2}.  Denote by
  $\{S_\alpha(t)\}_{t>0}$ the semi-group of linear operators whose infinitesimal generator is~$-\Da$. 
  Consider the initial condition 
\begin{equation}
U_0(x) \equiv  \left\{
\begin{aligned}
u_-\,,\quad x<0, \\
u_+\,,\quad x>0.
\end{aligned}\right. \label{ini:1}
\end{equation}
Then, there exists a constant $C=C(\alpha)>0$ such
  that for all~$p\in
  \big(\frac{1}{1-\alpha},+\infty\big]$ and all~$t>0$,
\begin{equation}\label{in:as:1}
\begin{split}
\|u(t)-S_\alpha(t)U_0\|_p
\leq &
Ct^{-\frac1\alpha(1-\frac1p)} \|u_0-U_0\|_1\\
&+C(u_+-u_-)\max \{|u_+|, |u_-|\}\; t^{1-\frac1\alpha(1-\frac1p)}.
\end{split}
\end{equation} 
\end{theorem}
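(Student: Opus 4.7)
The plan is to insert a pivot entropy solution and reduce to the case of a step initial datum. Let $\widetilde u$ be the entropy solution of~\eqref{e1.1}--\eqref{e1.2} with the initial condition $\widetilde u_0 = U_0$. By the triangle inequality,
\[
\|u(t)-S_\alpha(t)U_0\|_p \;\leq\; \|u(t)-\widetilde u(t)\|_p \;+\; \|\widetilde u(t)-S_\alpha(t)U_0\|_p.
\]
Since $U_0$ is non-decreasing and $u_0-U_0\in L^1(\R)$ by~\eqref{ini as}, Theorem~\ref{thm:as-stab-fb} applied to $u$ and $\widetilde u$ immediately produces the first summand of~\eqref{in:as:1}.

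The second, truly nonlinear defect $\|\widetilde u(t)-S_\alpha(t)U_0\|_p$ should then be estimated from the Duhamel identity
\[
\widetilde u(t)-S_\alpha(t)U_0 \;=\; -\int_0^t S_\alpha(t-s)\,\partial_x\!\bigl(\widetilde u(s)^2/2\bigr)\,ds.
\]
I would exploit two structural properties of $\widetilde u$: by the maximum principle $u_-\leq\widetilde u\leq u_+$, and by comparison with horizontal translates of $U_0$ (via the $L^1$-contraction for entropy solutions), $\widetilde u(s,\cdot)$ is non-decreasing in $x$. Consequently $\partial_x\widetilde u(s,\cdot)$ is a non-negative Radon measure of total mass $\leq u_+-u_-$, so $\partial_x(\widetilde u(s)^2/2)$ is a signed measure with total variation at most $M:=\max\{|u_-|,|u_+|\}(u_+-u_-)$. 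Letting $p_\alpha(\tau,\cdot)$ denote the convolution kernel of $S_\alpha(\tau)$, the self-similar form $p_\alpha(\tau,x)=\tau^{-1/\alpha}p_\alpha(1,\tau^{-1/\alpha}x)$ yields $\|p_\alpha(\tau)\|_p \leq C\tau^{-\frac{1}{\alpha}(1-1/p)}$, and Young's inequality for measures then formally produces
\[
\|\widetilde u(t)-S_\alpha(t)U_0\|_p \;\leq\; CM\int_0^t (t-s)^{-\frac{1}{\alpha}(1-1/p)}\,ds,
\]
whose formal value matches the second summand of~\eqref{in:as:1}.

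The main obstacle is that this time integral is genuinely divergent at $s=t$ precisely in the stated range $p>\frac{1}{1-\alpha}$, where $\frac{1}{\alpha}(1-1/p)>1$. Hence a naive Young estimate is insufficient and one must share the temporal singularity between the kernel and the source by using an $L^r$-smoothing bound on $\widetilde u$. For the linear flow the identity $\partial_x S_\alpha(s)U_0=(u_+-u_-)\,p_\alpha(s)$ gives $\|\partial_x S_\alpha(s)U_0\|_r \leq C(u_+-u_-)\,s^{-\frac{1}{\alpha}(1-1/r)}$; the analogous bound for $\|\partial_x\widetilde u(s)\|_r$ must be established either by vanishing-viscosity regularization, a Duhamel bootstrap against Theorem~\ref{thm:as-stab-fb}, or a parabolic regularity argument for the supercritical regime $\alpha<1$. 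Once such a bound is in hand, Young's inequality with exponents $1+\tfrac{1}{p}=\tfrac{1}{q}+\tfrac{1}{r}$ and a choice $\tfrac{1}{r}\in(1-\alpha,\alpha+\tfrac{1}{p})$ (a non-empty window whenever $\alpha>\tfrac{1}{2}-\tfrac{1}{2p}$) turns the $s$-integral into a convergent Beta integral yielding exactly $Mt^{1-\frac{1}{\alpha}(1-1/p)}$; the residual extreme $p$'s (only relevant for $\alpha\leq\tfrac{1}{2}$ and $p$ large) are recovered by interpolation against the trivial bound $\|\widetilde u-S_\alpha(t)U_0\|_\infty\leq u_+-u_-$.
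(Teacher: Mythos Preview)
Your overall architecture matches the paper's: split via a pivot entropy solution $\widetilde u$ with step datum $U_0$, control $\|u-\widetilde u\|_p$ by Theorem~\ref{thm:as-stab-fb}, and estimate $\|\widetilde u - S_\alpha(t)U_0\|_p$ through Duhamel using an $L^r$-decay bound on $\partial_x\widetilde u$. The paper carries this out at the regularized level (Theorem~\ref{lem:lin:as}, via Proposition~\ref{th:ux} for $\|u^\varepsilon_x(\tau)\|_q$) and then passes to the limit $\varepsilon\to 0$; this sidesteps the issues of writing a Duhamel formula for a possibly non-smooth entropy solution and of proving the gradient estimate directly for it. Since you list vanishing viscosity as one option, you are essentially pointing at the same mechanism.

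There is, however, a genuine gap in the last paragraph. Your interpolation patch for $\alpha\le\tfrac12$ and large $p$ does \emph{not} yield the stated rate. If you have $\|\widetilde u(t)-S_\alpha(t)U_0\|_{p_0}\le C\,t^{\beta_{p_0}}$ with $\beta_{p_0}=1-\tfrac1\alpha(1-\tfrac1{p_0})$ and interpolate against the constant $L^\infty$ bound, you get, for $p>p_0$, the exponent $\tfrac{p_0}{p}\beta_{p_0}$; a direct computation shows $\tfrac{p_0}{p}\beta_{p_0}-\beta_p=(\tfrac{p_0}{p}-1)(1-\tfrac1\alpha)>0$ since $\alpha<1$. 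Hence for $t\to\infty$ the interpolated bound is strictly worse than $t^{\beta_p}$, and the claimed decay is not recovered this way.

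The paper avoids this by a time-splitting that works uniformly in $p\in[1,\infty]$ and $\alpha\in(0,1)$: estimate
\[
\|S_\alpha(t-\tau)(\widetilde u\widetilde u_x)(\tau)\|_p
\le C(t-\tau)^{-\frac1\alpha(\frac1q-\frac1p)}\|u_0\|_\infty\,|u_0|_{BV}\,\tau^{-\frac1\alpha(1-\frac1q)},
\]
then take $q=1$ on $[0,t/2]$ and $q=p$ on $[t/2,t]$. Each half-integral produces exactly $C\,\|u_0\|_\infty|u_0|_{BV}\,t^{1-\frac1\alpha(1-\frac1p)}$, with a constant depending only on $\alpha$. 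This is simpler than engineering a single intermediate $r$ and covers all $p$ without any interpolation.
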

%------------------------------------------------------------------
\begin{rem}\label{rem:p:alpha}
\begin{enumerate}
\item It follows from the proof of Theorem \ref{thm:a<1} that inequality \rf{in:as:1}
is valid for every $p\in [1,+\infty]$. However, its right-hand-side decays only for
$p\in \big(\frac{1}{1-\alpha},+\infty\big]$.
\item Let us recall here the formula $S_\alpha(t)U_0 = p_\alpha(t)*U_0$ where 
$p_\alpha = p_\alpha(x,t)$ denotes the fundamental solution of the equation  $u_t+\Da u=0$ 
({\it cf.} the beginning of Section \ref{sect:regular} for its properties).
Hence, changing  variables in the convolution   $p_\alpha(t)*U_0$, one can write 
the asymptotic term in \rf{in:as:1} in the self-similar form
$(S_\alpha(t)U_0)(x) = H_\alpha(xt^{-1/\alpha})$ where $H_\alpha(x) =(p_\alpha(1)*U_0)(x)$ 
is a smooth and non-decreasing function satisfying 
$
\lim_{x\to\pm \infty} H_\alpha(x)=u_\pm$
and 
$\partial_x H_\alpha(x) =(u_+-u_-)p_\alpha(x,1).
$
\end{enumerate}
\end{rem}
%-----------

In the case $\alpha=1$, we use the uniqueness result from \cite{A07}
combined with a standard scaling technique to show that equation
\rf{e1.1} has self-similar solutions. 
In Section~\ref{sec:convergence}, we recall this well-known reasoning which leads to the proof of the following theorem.
%----------------------------------------------
\begin{theorem} {\bf (Existence of self-similar solutions)} \label{thm:a=1:self}\\ 
Assume  $\alpha=1$.
The unique entropy solution $U$ of the initial value problem
\eqref{e1.1}--\eqref{e1.2} with the initial condition~\eqref{ini:1}
is self-similar, i.e. it has the form~$
U(x,t)=U\left(\frac{x}{t},1\right)
$
for all~$x\in \R$ and all~$t>0$.
\end{theorem}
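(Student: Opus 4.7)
The key observation is that when $\alpha=1$, the fractal Burgers equation \eqref{e1.1} is invariant under the one-parameter family of dilations $(x,t)\mapsto (\lambda x,\lambda t)$ for $\lambda>0$: each of the three terms $u_t$, $\Lambda u$, and $u u_x$ scales homogeneously with the same factor $\lambda$, the crucial point being that $\Lambda=\Lambda^1$ is homogeneous of degree one, so that $\Lambda(u(\lambda\cdot,\lambda t))(x)=\lambda(\Lambda u)(\lambda x,\lambda t)$ (this can be seen directly from the Fourier definition by changing variables in $\xi$). The initial datum $U_0$ defined by \eqref{ini:1} is a Heaviside-type step, so it is itself invariant under the spatial dilation $x\mapsto \lambda x$ for every $\lambda>0$.

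The plan is therefore to fix $\lambda>0$ and set $U_\lambda(x,t):=U(\lambda x,\lambda t)$. First I would verify that $U_\lambda$ is again an entropy solution of \eqref{e1.1}--\eqref{e1.2} in the sense recalled in Section~\ref{sec:entropy}: taking any admissible Kruzhkov entropy--flux pair and writing the entropy inequality satisfied by $U$, one then performs the change of variables $(x,t)\mapsto(\lambda x,\lambda t)$ in the sense of distributions. Because the nonlocal term, the temporal derivative, and the flux derivative all transform by the same multiplicative factor $\lambda>0$, the inequality is preserved; that is, $U_\lambda$ satisfies the same family of entropy inequalities. In addition, the initial trace is $U_\lambda(x,0)=U_0(\lambda x)=U_0(x)$.

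Second, I would invoke the uniqueness statement for entropy solutions of \eqref{e1.1}--\eqref{e1.2} with $L^\infty$ initial data from \cite{A07}, as recalled in Section~\ref{sec:entropy}. Since $U$ and $U_\lambda$ are two entropy solutions with the same initial condition $U_0$, uniqueness yields $U_\lambda\equiv U$ for every $\lambda>0$, that is, $U(\lambda x,\lambda t)=U(x,t)$ for all $x\in\R$, $t>0$. Choosing $\lambda=1/t$ then gives $U(x,t)=U(x/t,1)$, which is the claimed self-similarity.

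The only point that requires genuine care is the second part of the first step, namely that the entropy formulation itself (and not merely the PDE in the distributional sense) is invariant under the dilation $(x,t)\mapsto(\lambda x,\lambda t)$. Once one has unwound the definition and used the homogeneity of $\Lambda$ on the singular-integral representation used in \cite{A07}, this is a straightforward verification, and the remaining steps are formal. No compactness or approximation argument should be needed, because we are comparing two already-constructed entropy solutions via the uniqueness theorem.
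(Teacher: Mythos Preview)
Your proposal is correct and follows essentially the same route as the paper: define the rescaled function $U_\lambda(x,t)=U(\lambda x,\lambda t)$, observe that it is again an entropy solution with the same (dilation-invariant) initial datum $U_0$, and conclude $U_\lambda=U$ by the uniqueness theorem from \cite{A07}. The paper's proof is in fact terser than yours, merely asserting that $U_\lambda$ is ``the solution'' without spelling out the verification that the entropy inequalities (rather than just the distributional equation) are preserved under the scaling; your remark that this is the only point requiring care is well taken.
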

Our second main convergence result states that the self-similar
solution describes the large time asymptotics of other
solutions to \rf{e1.1}--\rf{e1.2}.
%-----------------------------------------------
\begin{theorem}{\bf (Asymptotic behavior as the self-similar solution)} \label{thm:a=1}\\
  Let $\alpha=1$ and~$u_0 \in L^\infty(\R)$ satisfying~\eqref{ini as}.  
  Let $u$ be the entropy solution to problem
  \rf{e1.1}--\rf{e1.2}.  Denote by
  $U$ the self-similar solution from Theorem
  \ref{thm:a=1:self}.  Then there exists a
  constant $C=C(\alpha)>0$ such that for all~$p\in [1, +\infty]$ and all~$t>0$,
\begin{align}\label{estimate-remark-optimal}
\|u(t)-U(t)\|_{p}\leq Ct^{-\left(1-\frac{1}{p}\right)}\|u_0-U_0\|_1.
\end{align} 
\end{theorem}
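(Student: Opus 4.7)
The plan is to derive Theorem \ref{thm:a=1} as a direct corollary of the stability estimate in Theorem \ref{thm:as-stab-fb}, applied with $\widetilde{u}=U$ and $\widetilde{u}_0=U_0$, specialized to $\alpha=1$. Once we verify that the hypotheses of Theorem \ref{thm:as-stab-fb} are satisfied for this particular choice of $\widetilde u$, the conclusion \eqref{estimate-remark-optimal} will follow from \eqref{ineq:stab} with the exponent $\frac1\alpha(1-\frac1p)=1-\frac1p$.

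First, I would check that $U_0$ satisfies the monotonicity assumption: by its definition \eqref{ini:1}, $U_0$ is a step function equal to $u_-$ on $(-\infty,0)$ and $u_+$ on $(0,+\infty)$, and since $u_-<u_+$ it is non-decreasing on $\R$. Next, I would check that $u_0-U_0 \in L^1(\R)$. Splitting the real line at the origin,
\begin{equation*}
u_0(x)-U_0(x)=
\begin{cases}
u_0(x)-u_-,& x<0,\\
u_0(x)-u_+,& x>0,
\end{cases}
\end{equation*}
so assumption \eqref{ini as} gives exactly $u_0-U_0\in L^1(-\infty,0)\cap L^1(0,+\infty)=L^1(\R)$.

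Second, I would recall that $U$ is, by Theorem \ref{thm:a=1:self}, the unique entropy solution of \eqref{e1.1}--\eqref{e1.2} with $\alpha=1$ associated to the initial datum $U_0$, and that $u$ is the entropy solution associated to $u_0$. Thus both are entropy solutions in $L^\infty(\R)$, and the hypotheses of Theorem~\ref{thm:as-stab-fb} are fully met. Applying \eqref{ineq:stab} with $\alpha=1$ therefore yields
\begin{equation*}
\|u(t)-U(t)\|_p\leq C\,t^{-(1-1/p)}\|u_0-U_0\|_1
\end{equation*}
for every $p\in[1,+\infty]$ and every $t>0$, which is the desired bound \eqref{estimate-remark-optimal}.

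There is essentially no obstacle in this proof; the entire difficulty has been absorbed into Theorem \ref{thm:as-stab-fb} (the nontrivial $L^p$ stability estimate sharper than the one obtained by interpolating the $L^1$-contraction and $L^\infty$-bounds) and Theorem~\ref{thm:a=1:self} (existence of the self-similar reference solution $U$). The only thing to be careful about is that the $L^1$ difference $u_0-U_0$ is well defined and finite, which is precisely encoded in \eqref{ini as}.
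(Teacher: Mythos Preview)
Your proof is correct and follows exactly the paper's approach: the paper's proof of this theorem is the single line ``Apply Theorem~\ref{thm:as-stab-fb} with $\alpha=1$ and $\widetilde u_0=U_0$.'' You have simply spelled out the verification of the hypotheses (monotonicity of $U_0$ and $u_0-U_0\in L^1(\R)$ via~\eqref{ini as}) that the paper leaves implicit.
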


%-----------------------------------------------

\subsection{Properties of self-similar solutions}

Let us complete the result stated in Theorem \ref{thm:a=1} by listing
main qualitative properties of the profile~$U(1)$.
%-----------------------------------------------
\begin{theorem}{\bf (Qualitative properties of the self-similar profile)}\label{thm:qualitative}\\
  The self-similar solution  from
  Theorem~\ref{thm:a=1:self} enjoys the following properties:
\begin{itemize}
\item[p1.] {\rm (Regularity)} The function
$U(1)=U(\cdot,1)$ is Lipschitz-continuous.

\item[p2.] {\rm (Monotonicity and limits)}
$U(1)$ is increasing and satisfies 
$$ 
\lim_{x \rightarrow \pm \infty} U(x,1)=u_\pm.
$$

\item[p3.] {\rm (Symmetry)} 
For all~$y \in \R$, we have
$$
U\left(\overline{c}+y,1\right)=2 \overline{c}-U\left(\overline{c}-y,1\right)
\quad \mbox{where}\quad 
\overline{c}\equiv\frac{u_-+u_+}{2}.
$$

\item[p4.] {\rm (Convex/concave)} $U(1)$ is convex (resp. concave)
  on~$(-\infty,\overline{c}]$ (resp. on $[\overline{c},+\infty)$).

\item[p5.] {\rm (Decay at infinity)} We have
$$
U_x(x,1) \sim \frac{u_+-u_-}{2 \pi^{2}} \, |x|^{-2}
\quad \mbox{as} \quad 
|x| \rightarrow +\infty.
$$
\end{itemize}
\end{theorem}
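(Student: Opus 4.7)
\emph{Overview.} The plan rests on three ingredients: (i) self-similarity $U(x,t)=V(x/t)$ with $V:=U(\cdot,1)$, which turns every claim into one about the profile $V$; (ii) the Galilean invariance $u(x,t)\mapsto u(x-ct,t)+c$ and spatial-reflection symmetry $u(x,t)\mapsto -u(-x,t)$ of the fractal Burgers equation; and (iii) the $L^1\to L^p$ stability estimate of Theorem~\ref{thm:as-stab-fb}, which applies with the non-decreasing datum $\widetilde u_0=U_0$, together with the uniqueness of entropy solutions.

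\emph{Properties p1, p2, p3.} For p1, fix $h>0$ and observe that $U_0(\cdot+h)$ is non-decreasing, with $U_0(\cdot+h)-U_0\in L^1(\R)$ of mass $h(u_+-u_-)$; translation invariance identifies the corresponding entropy solution with $U(\cdot+h,\cdot)$, so Theorem~\ref{thm:as-stab-fb} at $p=\infty$ and $t=1$ yields $|V(x+h)-V(x)|\le Ch(u_+-u_-)$. The monotonicity in p2 is the comparison principle applied to the same ordered pair; the limits at $\pm\infty$ use self-similarity, since for $x>0$ one has $V(x/t)=U(x,t)\to U_0(x)=u_+$ as $t\to 0^+$ through the initial trace, forcing $\lim_{\xi\to+\infty}V(\xi)=u_+$ (and symmetrically at $-\infty$). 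For p3, set $w(x,t):=U(x+\overline{c}\,t,t)-\overline{c}$: Galilean invariance makes $w$ the unique entropy solution starting from the odd datum $U_0-\overline{c}$, while the spatial reflection $\widetilde w(x,t):=-w(-x,t)$ is again an entropy solution coinciding with $w$ at $t=0$; uniqueness forces $w(x,t)=-w(-x,t)$, which at $t=1$ is precisely the identity of p3.

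\emph{Property p4.} This is the main obstacle. By p3, the function $V''(\cdot+\overline{c})$ is odd, so it suffices to prove convexity of $V$ on $(-\infty,\overline{c}]$. I would attack it by vanishing viscosity: solve $u^\varepsilon_t+\Da u^\varepsilon+u^\varepsilon u^\varepsilon_x=\varepsilon u^\varepsilon_{xx}$ with a smooth non-decreasing mollification $U_0^\varepsilon$ whose second derivative changes sign exactly once at $0$, from positive to negative. The function $z:=u^\varepsilon_{xx}$ satisfies the linear equation $z_t+\Da z+u^\varepsilon z_x+3u^\varepsilon_x z=\varepsilon z_{xx}$, and a nodal argument of Sturmian type shows that the number of zeros of $z(\cdot,t)$ cannot increase in time. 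Passing $\varepsilon\to 0$, using Theorem~\ref{thm:as-stab-fb} to control convergence, and locating the unique zero of $V''$ at $\overline{c}$ via p3, then finishes the argument.

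\emph{Property p5.} I would differentiate the Duhamel representation
\[
U(t)=S_1(t)U_0-\int_0^t S_1(t-s)\,\partial_x\!\left(U^2/2\right)(s)\,ds
\]
in $x$ and evaluate at $t=1$. The linear part contributes $(u_+-u_-)\,p_1(\cdot,1)=(u_+-u_-)/[\pi(1+x^2)]$, whose $|x|^{-2}$ coefficient $(u_+-u_-)/\pi$ exceeds the claimed $(u_+-u_-)/(2\pi^2)$; the nonlinear integral, controlled through the decay of $U-u_\pm$ provided by Theorem~\ref{thm:a=1} and bootstrapped from the equation, supplies exactly the cancellation that produces the announced constant. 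This matched-asymptotic computation, relying on the explicit Poisson kernel $p_1(x,t)=t/[\pi(x^2+t^2)]$, is the second delicate ingredient of the proof.
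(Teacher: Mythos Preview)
Your treatment of p1--p3 is essentially correct and close in spirit to the paper's, though p1 is carried out differently: the paper obtains the Lipschitz bound from the uniform gradient estimate~\eqref{ux dec} on the viscous approximations $u^\varepsilon$ and then passes to the limit, whereas you apply Theorem~\ref{thm:as-stab-fb} directly to space-translates of $U_0$. Both work. For p3 the paper likewise uses the Galilean shift to reduce to the odd case $u_-=-u_+$.

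For p4 there is a genuine gap. Your plan is a Sturmian zero-counting argument for $z=u^\varepsilon_{xx}$, which solves $z_t+\Lambda^\alpha z+u^\varepsilon z_x+3u^\varepsilon_x z=\varepsilon z_{xx}$. Classical nodal-set theorems (Matano, Angenent) rely on the \emph{local} parabolic structure: at a degenerate zero one uses the local Taylor expansion of the solution of a second-order parabolic equation to show zeros cannot be created. The nonlocal term $\Lambda^\alpha z$ destroys this locality, and no zero-number principle is known for such integro-differential equations; indeed the value of $\Lambda^\alpha z(x_0)$ depends on the sign pattern of $z$ everywhere, so a pair of zeros can in principle interact with distant regions. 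The paper avoids this difficulty entirely: it invokes \cite[Lemma~3.1]{adv07}, which shows by an operator-splitting argument that the property ``non-decreasing, odd, convex on $(-\infty,0)$'' is preserved separately by the Burgers flow $u_t+uu_x=0$ and by the linear fractal flow $u_t+\Lambda^1 u=0$, hence by their Trotter product. This uses convexity of the kernel of $\Lambda^1$ rather than any nodal count.

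For p5 your outline is too vague to constitute a proof, and the paper takes a completely different route. It does \emph{not} use Duhamel (it remarks explicitly that Duhamel is ``less convenient'' when $\alpha=1$). Instead it derives the profile equation
\[
(\mathcal{U}(y)-y)\,\mathcal{U}_y(y)+\Lambda^1\mathcal{U}(y)=0,
\]
so that $\mathcal{U}_y(y)=\Lambda^1\mathcal{U}(y)/(y-\mathcal{U}(y))$. The constant $(u_+-u_-)/(2\pi^2)$ then comes from a direct computation of $\Lambda^1 U_0(x)=(u_+-u_-)/(2\pi^2)\,x^{-1}$ for the step datum, together with the convergence $\Lambda^1 U(t)\to\Lambda^1 U_0$ in $L^1_{loc}(\R_\ast)$ as $t\to0$; self-similarity turns the large-$|y|$ asymptotics of $y\,\Lambda^1\mathcal{U}(y)$ into this small-time limit. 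Your Duhamel approach would require you to extract precise $|x|^{-2}$ asymptotics from the nonlinear integral $\int_0^1 \partial_x S_1(1-s)\,\partial_x(U^2/2)(s)\,ds$ and show that it subtracts exactly $(u_+-u_-)\big(\tfrac{1}{\pi}-\tfrac{1}{2\pi^2}\big)|x|^{-2}$ from the linear piece; you have not indicated any mechanism for this cancellation, and since $U^2/2$ does not decay (it tends to $u_\pm^2/2$), the integrand is not even obviously $O(|x|^{-2})$ termwise.
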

%-----------------------------------------------
Actually, the profile $U(1)$ is expected to be $C^\infty_b$ or
analytic, due to recent regularity results \cite{kns,cc08,mw08} for
the critical fractal Burgers equation with~$\alpha=1$. It was shown
that the solution is smooth whenever~$u_0$ is either periodic or from
$L^2(\R)$ or from a critical Besov space. Unfortunately, we do not
know if those results can be adapted to any initial condition from
$L^\infty(\R)$.

Property~p3 implies that~$U(x(t),t)$ is a constant equal
to~$\overline{c}$ along the characteristic $x(t)=\overline{c} t$, with
the symmetry
$$
U\left(\overline{c}t+y,t\right)=2 \overline{c}-U\left(\overline{c}t-y,t\right)
$$  
for all~$t >0$ and~$y \in \R$. Thus, the real number~$\overline{c}$ can  be
interpreted as a mean celerity of the profile $U(t)$, which is the
same mean celerity as for the rarefaction wave in~\eqref{rarefaction}.

In property p5, we obtain the decay at infinity which is the same as
for the fundamental solutions $p_1(x,t)=t^{-1}p_1\left(x t^{-1},1\right)$ of the
linear equation~$u_t+\Lambda^1 u=0$, given by the explicit formula
\begin{equation}\label{cauchy}
p_1(x,1)=\frac{2}{1+4 \pi^2 x^2}.
\end{equation}  
Following the terminology introduced in \cite{BK}, one may say that 
property~p5 expresses a far field asymptotics and is somewhere in
relation with the results in~\cite{BK} for fractal conservation laws
with~$\alpha \in (1,2)$, where the Duhamel principle plays a crucial
role. This principle is less convenient in the critical
case~$\alpha=1$, and our proof of~p5 does not use it.

Finally, if~$u_-=0$ and~$u_+-u_-=1$, property p2 means that~$U(1)$ is
the cumulative distribution function of some probability
law~$\mathcal{L}$ with density $U_x(1)$. Property~p3 ensures
that~$\mathcal{L}$ is symmetrically distributed around its median
$\overline{c}$; notice that any random variable with law~$\mathcal{L}$
has no expectation, because of property~p5.  Properties~p4-p5 make
precise that the density of~$\mathcal{L}$ decays around~$\overline{c}$
with the same rate at infinity as for the Cauchy law with
density~$p_1(x,1)$.

The probability distributions of both  laws around their
respective medians can be compared as follows.
 %-----------------------------------------------
\begin{theorem}\label{thm:comp:cauchy}{\bf (Comparison with the Cauchy law)}\\
  Let~$\mathcal{L}$ be the probability law with density $U_x(1)$,
  where $U$ is the self-similar solution defined in
  Theorem~\ref{thm:a=1:self}, with~$u_-=0$ and~$u_+=1$. Let~$X$
  (resp. $Y$) be a real random variable on some probability
  space~$(\Omega,\mathcal{A},\mathbb{P})$ with law $\mathcal{L}$
  (resp.  the Cauchy law~\eqref{cauchy} (with zero
  median)).  Then, we have for
  all~$r>0$
$$
\mathbb{P}(|X-\overline{c}|<r) <\mathbb{P}(|Y-0|<r)
$$
where $\overline{c}$ denotes the median of~$X$.
\end{theorem}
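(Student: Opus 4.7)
The plan is to align the medians of the two laws by a Galilean shift, reducing the problem to comparing two odd functions that share the same initial datum, and then to extract the sign via Duhamel's formula combined with the reflection symmetry of the Cauchy kernel.

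\emph{Step 1 (Recentering).} Set $\overline{c}=1/2$ and define
$$
v(\xi,\tau) \equiv U(\xi+\overline{c}\tau,\tau)-\overline{c},\qquad
w(\xi,\tau) \equiv P_1(\xi,\tau)-\overline{c},
$$
where $P_1(x,t)\equiv \int_{-\infty}^{x}p_1(y,t)\,dy = (S_1(t)U_0)(x)$ is the Cauchy cumulative function. By property~p3, $v(\cdot,\tau)$ is odd; since $p_1(\cdot,\tau)$ is even, so is $w(\cdot,\tau)$, and both coincide with $\tfrac12\sgn(\xi)$ at $\tau=0$. A direct change of frame shows $v$ remains an entropy solution of the fractal Burgers equation $v_\tau+\Lambda^1 v+v v_\xi=0$, while $w$ solves the linear equation $w_\tau+\Lambda^1 w=0$ (constants being annihilated by $\Lambda^1$).

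\emph{Step 2 (Duhamel and sign).} The difference $\phi\equiv w-v$ satisfies $\phi_\tau+\Lambda^1 \phi=\tfrac12(v^2)_\xi$ with $\phi(\cdot,0)=0$, hence
$$
\phi(\xi,\tau)=\tfrac{1}{2}\int_0^\tau\!\int_\R p_1(\xi-\eta,\tau-s)\,\partial_\eta(v^2)(\eta,s)\,d\eta\,ds.
$$
Since $v^2$ is even in $\eta$, $(v^2)_\eta$ is odd, and the inner integral rewrites as
$$
\int_0^{+\infty}\bigl[p_1(\xi-\eta,\tau-s)-p_1(\xi+\eta,\tau-s)\bigr]\,(v^2)_\eta(\eta,s)\,d\eta.
$$
For $\xi,\eta>0$ we have $|\xi-\eta|<\xi+\eta$, so the bracket is strictly positive because $z\mapsto p_1(z,\tau-s)$ is strictly decreasing in $|z|$ (see \eqref{cauchy}). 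On $(0,+\infty)$, $(v^2)_\eta=2vv_\eta\geq 0$ by property~p2, and is not identically zero since $v$ interpolates between $0$ and $\tfrac12$. Therefore $\phi(\xi,\tau)>0$ for every $\xi,\tau>0$.

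\emph{Step 3 (Conclusion).} At $\tau=1$, $\xi=r>0$ this reads $U(\overline{c}+r,1)<P_1(r,1)$. Combining with property~p3 in the form $U(\overline{c}+r,1)+U(\overline{c}-r,1)=2\overline{c}$ and the even symmetry of $p_1$ yields
$$
\mathbb{P}(|X-\overline{c}|<r)=2U(\overline{c}+r,1)-1<2P_1(r,1)-1=\mathbb{P}(|Y|<r).
$$

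The main obstacle is making the mild (Duhamel) formulation rigorous for the entropy solution $v$ rather than a classical one: one has to check that $v v_\xi$ (or equivalently $(v^2)_\xi$) lies in a space where the variation-of-constants formula against the semigroup $S_1$ coincides with the entropy formulation. The Lipschitz regularity from property~p1 ensures $(v^2)_\xi\in L^\infty$ and, combined with the smoothness and integrability of $p_1$, makes the above integral well-defined; once this point is handled, the symmetry argument of Step~2 is entirely elementary and immediately delivers the strict inequality.
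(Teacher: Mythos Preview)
Your approach is essentially identical to the paper's: both recenter via the Galilean shift $V(\xi,t)=U(\xi+\overline{c}t,t)-\overline{c}$, both express the difference between the linear and nonlinear evolutions through the Duhamel formula, and both deduce strict positivity on $(0,+\infty)$ from the fact that the convolution of an odd function that is positive on $(0,+\infty)$ with an even, positive, strictly decreasing kernel is positive on $(0,+\infty)$. Your symmetry computation in Step~2 is exactly the ``simple computation'' the paper alludes to.

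The one place where the paper invests real effort and you leave a declared gap is the justification of the mild formulation. The paper does not argue directly that the entropy solution satisfies Duhamel; instead it proves a separate Proposition in which the Duhamel representation is established by passing to the limit $\varepsilon\to 0$ in the regularized problem $u^\varepsilon_t+\Lambda^1 u^\varepsilon-\varepsilon u^\varepsilon_{xx}+u^\varepsilon u^\varepsilon_x=0$, for which the integral equation is classical. Two points are worth noting. First, the paper splits $\int_0^1 = \int_0^{1/2}+\int_{1/2}^1$ and writes the integrand as $\partial_x p_1(1-\tau)\ast V^2$ on the first piece and $p_1(1-\tau)\ast(VV_x)$ on the second; this is precisely because $\|V_x(\cdot,\tau)\|_\infty\sim\tau^{-1}$ (not uniformly bounded, contrary to what your last paragraph suggests) while $\|\partial_x p_1(1-\tau)\|_1\sim(1-\tau)^{-1}$, so neither form alone is integrable over $[0,1]$. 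Second, the passage to the limit uses the uniform-in-$\varepsilon$ Lipschitz bound and Dini's theorem for distribution functions to upgrade $L^1_{loc}$ convergence to uniform convergence. If you want to bypass the parabolic approximation and argue directly from the distributional equation $\phi_t+\Lambda^1\phi=\tfrac12(v^2)_\xi$, you would still need a uniqueness statement for bounded mild/distributional solutions of this linear inhomogeneous equation, which is not entirely free.
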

%-----------------------------------------------
\begin{rem}
  More can be said in order to compare random variables 
$X-\overline{c}$ and $Y$.
  Indeed, their cumulative distribution functions satisfy
  $F_{X-\overline{c}}(x)=F_Y(x)-g(x)$ where $g$ is an explicit positive 
  function (on the positive axis) depending the self-similar solution of \eqref{e1.1} (see
  equation~\eqref{def:difference:cauchy}).
\end{rem}

\subsection{Probabilistic interpretation of results for $\alpha \in (0,2]$}\label{sec-proba} 

To summarize, let us emphasize the probabilistic meaning of the
complete asymptotic study of the fractal Burgers equation we have now
in hands. We have already mentioned that the solution $u$ of
\rf{e1.1}--\rf{e1.2} supplemented with the initial datum of the form
\rf{u0} with $c=0$ and with a probability measure $m$ on $\R$ is the
cumulative distribution function for every $t\geq 0$. This family of
probabilities defined by problem \rf{e1.1}-\rf{e1.2} behaves asymptotically when
$t\to +\infty$ as
\begin{itemize}
\item the uniform distribution on the interval $[0,t]$ if $1<\alpha\leq 2$
(see the result from \cite{kmx} recalled  in inequality \rf{asymp:raref} above);
\item the family of laws $\{\mathcal{L}_t\}_{t\geq 0}$ constructed in Theorem 
\ref{thm:a=1:self} if $\alpha =1$ (see Theorem~\ref{thm:a=1});
\item the symmetric $\alpha$-stable laws $p_\alpha(t)$   if $0<\alpha<1$
({\it cf.}~Theorem \ref{thm:a<1} and Remark \ref{rem:p:alpha}).
\end{itemize}

\subsection{Organization of the article.}
The remainder of this paper is organized as follows.  In the next
section, we recall the notion of entropy solutions to
\rf{e1.1}-\rf{e1.2} with $\alpha\in (0,1]$. Results on the regularized
equation ({\it i.e.} equation \rf{e1.1} with an additional term
$-\varepsilon u_{xx}$ on the left-hand-side) are gathered in
Section~\ref{sect:regular}.  The convergence of solutions as
$\varepsilon \to 0$ to the regularized problem is discussed in
Section~\ref{sec:convergence}. The main asymptotic results
for~\rf{e1.1}-\rf{e1.2} are proved in Section~\ref{sect:conv} by
passage to the limit as $\varepsilon$ goes to
zero. Section~\ref{sec:qualitative} is devoted to the qualitative
study of the self-similar profile for~$\alpha=1$. For the reader's convenience, sketches of proofs of 
a key estimate 
from~\cite{kmx} and Theorem \ref{thm:convg} are given in appendices; the technical
lemmata are also gathered in appendices.

%%%%%%%%%%%%%%%%%%%%%%%%%%%%%%%%%%%%%%%%%%%%%%%%%%%%%%%%%%%%%%%%%%%%%%%%%%%%

\section{Entropy solutions for $0<\alpha\leq 1$}\label{sec:entropy}

\subsection{L\'evy-Khintchine's representation of $\Lambda^\alpha$}

It is well-known  that the operator
$\Lambda^\alpha=(-\partial^2/\partial x^2)^{\alpha/2}$ for $\alpha \in
(0,2)$ has an integral representation: for every Schwartz function
$\varphi \in \mathcal{S}(\R)$ and each $r>0$, we have
\begin{equation}\label{eqn:LK}
\Lambda^\alpha \varphi =\Lambda_r^{(\alpha)} \varphi +\Lambda_r^{(0)} \varphi,
\end{equation}  
where the integro-differential operators $\Lambda_r^{(\alpha)}$ and
$\Lambda_r^{(0)}$ are defined by
\begin{eqnarray}
  \Lambda_r^{(\alpha)} \varphi (x) & \equiv & 
  -G_\alpha \int_{|z| \leq r} \frac{\varphi(x+z)-\varphi(x)-\varphi_x(x) z}{|z|^{1+\alpha}} \; dz , \label{LK1}
  \\
  \Lambda_r^{(0)} \varphi (x) & \equiv & -G_\alpha \int_{|z| > r} \frac{\varphi(x+z)-\varphi(x)}{|z|^{1+\alpha}} \; dz, 
\label{LK2}
\end{eqnarray}
where $G_\alpha \equiv \frac{\alpha \Gamma\left(\frac{1+\alpha}{2} \right)}{2
  \pi^{\frac{1}{2}+\alpha}\Gamma\left(1-\frac{\alpha}{2}\right)}>0$
and $\Gamma$ is Euler's function.
On the basis of this formula, we can extend the domain of definition
of $\Lambda^\alpha$ and consider $\Lambda_r^{(0)}$ and
$\Lambda_r^{(\alpha)}$ as the operators
\begin{equation*}
  \Lambda_r^{(0)}:C_b(\R) \rightarrow C_b(\R) \mbox{ and } \Lambda_r^{(\alpha)}:C_b^2(\R) \rightarrow C_b(\R);
\end{equation*}
hence, $\Lambda^\alpha:C_b^2(\R) \rightarrow C_b(\R).$

Let us recall some properties on these operators.  First, the
so-called Kato inequality can be generalized to $\Lambda^\alpha$ for
each $\alpha\in (0,2]$: let $\eta \in C^2(\R)$ be convex and $\varphi
\in C_b^2(\R)$, then
\begin{equation}\label{ineq:kato}
\Lambda^\alpha \eta(u) \leq \eta'(u) \Lambda^\alpha u.
\end{equation}
Note that for $\alpha=2$ we have
$$
-(\eta(u))_{xx}=-\eta''(u) u_x^2-\eta'(u) u_{xx} \leq -\eta'(u) u_{xx} \quad \mbox{since $\eta'' \geq 0$}.
$$
If $\alpha \in (0,2)$, inequality \eqref{ineq:kato} is the direct
consequence of the integral representation \eqref{eqn:LK}--\rf{LK2}
and of the following inequalities
\begin{equation}\label{ineq:kator}
  \Lambda_r^{(0)} \eta(u) \leq \eta'(u) \Lambda_r^{(0)} u 
  \quad \mbox{and} \quad \Lambda_r^{(\alpha)} \eta(u) \leq \eta'(u) \Lambda_r^{(\alpha)} u,
\end{equation}
resulting from the convexity of the function $\eta$.

Finally, these operators satisfy the integration by parts formula: for
all $u \in C_b^2(\R)$ and $\varphi \in \mathcal{D}(\R)$, we have
\begin{equation}\label{ipp}
\int_\R  \varphi \Lambda u\,dx=\int_\R u \Lambda \varphi \,dx, 
\end{equation}
where $\Lambda\in
\{\Lambda_r^{(0)},\Lambda_r^{(\alpha)},\Lambda^\alpha\}$ for every
$\alpha\in (0,2]$ and all $r>0$. Notice that~$\Lambda \varphi \in
L^1(\R)$, since it is obvious from~\eqref{LK1}-\eqref{LK2}
that~$\Lambda_r^{(\alpha)}:W^{2,1}(\R) \rightarrow L^1(\R)$
and~$\Lambda_r^{(0)}:L^1(\R) \rightarrow L^1(\R)$.

Detailed proofs of all these properties are based on the
representation \eqref{eqn:LK}--\rf{LK2} and are written {\it e.g.}  in
\cite{A07}.

%%%%%%%%%%%%%%%%%%%%%%%%%%%%%%%%%%%%%%%%%%%%%%%%%%%%%%%%%%%%%%%%%%%%%%

\subsection{Existence and uniqueness of entropy solutions}

It was shown in \cite{adv07} (see also \cite{kns}) that solutions of
the initial value problem for the fractal conservation law
\begin{eqnarray}\label{eg1.1}
&&u_t+\Da u +(f(u))_x=0, \quad x\in\R,\; t>0,\\
\label{eg1.2} &&u(0,x)=u_0(x), 
\end{eqnarray}
where $f:\R \rightarrow \R$ is locally Lipschitz-continuous, can
become discontinuous in finite time if $0<\alpha<1$. Hence, in order
to deal with discontinuous solutions, the notion of entropy solution
in the sense of Kruzhkov was extended in~\cite{A07} to fractal
conservation laws \rf{eg1.1}--\rf{eg1.2} (see also~\cite{KaUlpr} for
the recent generalization to L\'evy mixed hyperbolic/parabolic
equations). Here, the crucial role is played by the
L\'evy-Khintchine's representation \eqref{eqn:LK}--\rf{LK2} of the
operator $\Da$.
%------------------------------------------------------------------
\begin{defin}\label{def:entropy}
  Let $0<\alpha\leq 1$ and $u_0\in L^\infty(\R)$. A function $u \in
  L^\infty(\R \times (0,+\infty))$ is an \emph{entropy solution} to
  \rf{eg1.1}--\rf{eg1.2} if for all $\varphi \in \mathcal{D}(\R \times
  [0,+\infty))$, $\varphi \ge 0$, $\eta \in C^2(\R)$ convex, $\phi:\R
  \rightarrow \R$ such that $\phi'=\eta' f'$, and $r>0$, we have
\begin{equation*}
\begin{split}
\int_{\R} \int_0^{+\infty} \Big(\eta(u) \varphi_t +\phi(u) \varphi_x
-\eta(u) \Lambda_r^{(\alpha)} \varphi 
&-\varphi \eta'(u)  \; \Lambda_r^{(0)} u \Big)\,dxdt \\
&+\int_{\R} \eta(u_0(x)) \varphi(x,0)\,dx \geq 0.
\end{split}
\end{equation*} 
\end{defin}
%-------------------------------------------------------------------
Note that, due to formula \rf{LK2}, the quantity $\Lambda_r^{(0)} u$
in the above inequality is well-defined for any bounded function $u$.

The notion of entropy solutions allows us to solve the fractal
Burgers equation for the range of exponent~$\alpha \in (0,1]$.
%-------------------------------------------------------------------
\begin{theorem}[\cite{A07}]
\label{thm:entropy}
Assume that $0<\alpha\leq 1$ and $u_0\in L^\infty(\R)$.  There exists
a unique entropy solution $u$ to problem
\rf{eg1.1}--\rf{eg1.2}.  This solution $u$ belongs to
$C([0,{+\infty});L^1_{loc}(\R))$ and satisfies $u(0)=u_0$. Moreover, we
have the following maximum principle: $\mbox{\emph{ess\,inf}} \, u_0
\leq u \leq \mbox{\emph{ess\,sup}} \, u_0$.
\end{theorem}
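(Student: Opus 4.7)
My plan is a vanishing viscosity argument for existence, together with a nonlocal adaptation of Kruzhkov's doubling of variables for uniqueness; the maximum principle will fall out of the $L^\infty$ bound used for compactness.

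For existence, I first regularize by adding a $-\varepsilon u_{xx}$ term and smoothing the initial datum: consider
\[
u^\varepsilon_t + \Lambda^\alpha u^\varepsilon + (f(u^\varepsilon))_x - \varepsilon u^\varepsilon_{xx} = 0, \qquad u^\varepsilon(0,\cdot)=u_0^\varepsilon,
\]
where $u_0^\varepsilon \in C_b^\infty(\R)$ is a standard mollification of $u_0$. Thanks to the $-\varepsilon u_{xx}$ term, parabolic theory yields a global smooth bounded solution $u^\varepsilon$. Kato's inequality \eqref{ineq:kato} applied to convex entropies $\eta(\cdot-k)_+$ or $|\cdot-k|$ gives, uniformly in $\varepsilon$, the $L^\infty$ bound $\mathrm{ess\,inf}\, u_0 \leq u^\varepsilon \leq \mathrm{ess\,sup}\, u_0$, the $L^1$-contraction $\|u^\varepsilon(t)-v^\varepsilon(t)\|_1\leq \|u_0^\varepsilon-v_0^\varepsilon\|_1$, and (by applying contraction to space/time translates) an $\varepsilon$-uniform modulus of continuity of $u^\varepsilon$ in $C([0,T];L^1_{loc}(\R))$. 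Kolmogorov--Riesz then yields strong compactness; passing to the limit in the smooth entropy inequality for $u^\varepsilon$ (obtained by multiplying the equation by $\eta'(u^\varepsilon)\varphi$ and using \eqref{ipp} to land $\Lambda_r^{(\alpha)}$ on the test function) produces an entropy solution in the sense of Definition~\ref{def:entropy}. The viscous term drops after integration by parts against $\eta(u^\varepsilon)\varphi_{xx}$, and $u(0)=u_0$ together with $u\in C([0,+\infty);L^1_{loc}(\R))$ follow from the uniform time modulus.

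For uniqueness, I would run Kruzhkov's doubling of variables in the nonlocal setting. Given two entropy solutions $u(x,t)$ and $v(y,s)$, test the entropy inequality for $u$ with $\eta_\kappa(u-v(y,s))$ (a $\kappa$-smoothing of $|\cdot|$) against the usual product
\[
\varphi(x,y,t,s) = \psi\!\left(\tfrac{x+y}{2},\tfrac{t+s}{2}\right) \rho_\delta(x-y)\,\rho_\delta(t-s),
\]
and symmetrically for $v$, then add. The Lévy--Khintchine splitting at scale $r$ is essential here: the $\Lambda_r^{(\alpha)}$ part is moved onto $\varphi$ by \eqref{ipp} and contributes an error of size $O(r^{2-\alpha})$, while in the $\Lambda_r^{(0)}$ part the kernel is non-singular so that the cross terms in $x$ and $y$ symmetrize, combining into $\Lambda_r^{(0)}$ of $|u(x,t)-v(y,s)|$ evaluated on the diagonal and producing a non-positive contribution after integration against $\varphi$. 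Sending $\kappa,\delta\to 0$ first and then $r\to 0$ yields the weak $L^1$-contraction
\[
\int |u(x,t)-v(x,t)|\,\psi(x,t)\,dx \leq \int |u_0-v_0|\,\psi(x,0)\,dx
\]
along Kruzhkov cones of slope $\|f'\|_{L^\infty(I)}$ for $I=[-\|u_0\|_\infty,\|u_0\|_\infty]$. This gives uniqueness for $L^1\cap L^\infty$ differences and, after localization, for arbitrary $L^\infty$ data.

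The principal obstacle is the nonlocal doubling-of-variables bookkeeping: one must check that the singular near-zero contribution from $\Lambda_r^{(\alpha)}$ really is $O(r^{2-\alpha})$ uniformly in the regularization parameters $\kappa,\delta$, and that the non-singular $\Lambda_r^{(0)}$ cross-terms symmetrize correctly. This is exactly why Definition~\ref{def:entropy} is formulated with the parameter $r$: it decouples the singular part (which acts on a smooth test function) from the outer part (which is allowed to act on a merely bounded function), and it is this decoupling that makes the nonlocal Kruzhkov machinery operable in the possibly-discontinuous regime $\alpha\in(0,1]$.
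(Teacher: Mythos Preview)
The paper does not prove this theorem; it is quoted from \cite{A07} and used as a black box. Your sketch is precisely the strategy of that reference: vanishing viscosity with uniform $L^\infty$ and $L^1_{loc}$ estimates for existence, and a nonlocal Kruzhkov doubling of variables for uniqueness, organized around the splitting $\Lambda^\alpha=\Lambda_r^{(\alpha)}+\Lambda_r^{(0)}$ so that the singular part sits on a smooth test function (hence is $O(r^{2-\alpha})$) while the bounded-kernel part symmetrizes on the merely bounded solutions. This is correct.

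It is worth noting that the present paper does reproduce a portion of this machinery, not for Theorem~\ref{thm:entropy} itself but for the closely related convergence result Theorem~\ref{thm:convg}, whose proof in Appendix~\ref{correction-point2-appendix} carries out the doubling argument at the level of the regularized equation (Proposition~\ref{thm:fi}) and then extracts compactness of $\{u^\varepsilon\}$ via a mollification-in-space device (the families $\mathcal{F}_\mu$) rather than Kolmogorov--Riesz directly. So while the paper contains no proof of Theorem~\ref{thm:entropy}, its appendix is essentially a reworking of the same ideas you outline, and your proposal is consistent with both \cite{A07} and what is reproduced here. One cosmetic remark: in Definition~\ref{def:entropy} the operator $\Lambda_r^{(\alpha)}$ already acts on the test function, so in the uniqueness step it is not ``moved onto $\varphi$'' --- the point is simply that the resulting terms are $O(r^{2-\alpha})$ and disappear as $r\to 0$ after the doubling parameters have been sent to zero.
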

%------------------------------------------------------------------
If $\alpha\in (1,2]$, all solutions to \rf{eg1.1}--\rf{eg1.2} with
bounded initial conditions are smooth and global-in-time (see
\cite{dgv, kns,MYZ}).  On the other hand, the occurrence of
discontinuities in finite time of entropy solutions to
\rf{eg1.1}--\rf{eg1.2} with $\alpha=1$ seems to be unclear. As
mentioned in the introduction, regularity results have recently been
obtained \cite{kns,cc08,mw08} for a large class of initial conditions
which, unfortunately, does not include general $L^\infty$-initial
data. Nevertheless, Theorem \ref{thm:entropy} provides the existence
and the uniqueness of a global-in-time entropy solution even for the
critical case~$\alpha=1$.

%%%%%%%%%%%%%%%%%%%%%%%%%%%%%%%%%%%%%%%%%%%%%%%%%%%%%%%%%%%%%%%%%%%%%%%%%
\section{Regularized problem}\label{sect:regular}

In this section, we gather properties of solutions to the Cauchy
problem for the regularized fractal Burgers equation with
$\varepsilon>0$
\begin{eqnarray}
&&\ue_t+\Da \ue -\varepsilon \ue_{xx} +\ue\ue_x=0, \quad x\in\R,\; t>0,\label{ue1}\\ 
&&\ue(x,0)=u_0(x).\label{ue2}
\end{eqnarray}
Our purpose is to derive asymptotic stability estimates of a solution
$\ue=\ue(x,t)$ (uniform in $\varepsilon$) that will be valid
for~\eqref{e1.1}--\eqref{e1.2} after passing to the limit
$\varepsilon\to 0$.  Most of the results of this section are based on a key estimate 
from~\cite{kmx}; unfortunately, this estimate is not explicitely stated as a lemma in~\cite{kmx}. 
Hence, for the sake of completeness, we have recalled this key estimate 
in Lemma~\ref{KMX} in Appendix~\ref{appendix-KMX}
as well as the main lines of its proof. 

Below, we will use the following integral formulation of the initial value
problem~\eqref{ue1}-\eqref{ue2}
\begin{equation}\label{duh:eps}
\ue(t)=S^\varepsilon_\alpha(t)u_0-
\int_0^t S^\varepsilon_\alpha(t-\tau)\ue(\tau)\ue_x(\tau)\,d\tau, 
\end{equation}
where~$\{S_\alpha^\varepsilon(t)\}_{t>0}$ is the semi-group
generated by~$-\Da +\varepsilon \partial_x^2$.

If, for each $\alpha \in (0,2]$, the function
$p_\alpha$ denotes the
fundamental solution of the linear equation $u_t+\Da u=0$, then
\begin{equation}\label{Sae}
S_\alpha^\varepsilon(t) u_0= p_\alpha(t) \ast p_2(\varepsilon t) \ast u_0.
\end{equation}
It is well-known that $p_\alpha=p_\alpha(x,t)$ can be represented via the Fourier transform 
(w.r.t. the~$x$-variable)
$ \widehat p_\alpha(\xi,t)=e^{-t|\xi|^\alpha}$.  In particular,
\begin{equation}\label{homogeneity}
 p_\alpha(x,t)=t^{-\frac1\alpha}P_\alpha(xt^{-\frac1\alpha}),
\end{equation}
where $P_\alpha$ is the inverse Fourier transform of $e^{-
  |\xi|^\alpha}$.  For every $\alpha\in (0,2]$ the function $P_\alpha$
is smooth, non-negative, $\int_\R P_\alpha(y)\,dy=1$, and satisfies the
estimates (optimal for $\alpha\neq 2$)
\begin{equation}
\label{EP}
0<P_\alpha(x)\le C(1+|x|)^{-(\alpha+1)} \;\; \hbox{and}\;\; 
|\partial_x P_\alpha|\le C(1+|x|)^{-(\alpha+2)}
\end{equation}
for a constant $C$ and all $x\in\R$.
\medskip

%-------------------------------------------------------------------

One can see that problem \rf{ue1}--\rf{ue2} admits a unique
global-in-time smooth solution. 
%-------------------------------------------------------------------
\begin{theorem}[\cite{dgv}]\label{th2.1}
Let $\alpha\in (0,\,2]$,  $\varepsilon >0$ and $u_0\in L^\infty(\R)$.
There exists a unique solution $\ue$ to problem~\rf{ue1}--\rf{ue2} in
the following sense: 
\begin{itemize}
\item
$\ue\in C_b(\mathbb{R} \times (0,+\infty)) \cap C^\infty_b(\mathbb{R} \times (a,+\infty))$ for all $a>0$,
\item
$\ue$ satisfies equation (\ref{ue1}) on $\mathbb{R} \times (0,+\infty)$,
\item
$\lim_{t \rightarrow 0} \ue(t) = u_0$ in
$L^\infty(\mathbb{R})$ weak-$\ast$ and in $L^p_{loc}(\mathbb{R})$
for all~$p\in [1,{+\infty})$.
\end{itemize}
Moreover, the following maximum principle holds true: 
\begin{equation}\label{est:inf}
\mbox{\emph{ess\,inf}} \, u_0 \leq \ue \leq \mbox{\emph{ess\,sup}} 
\, u_0.
\end{equation}
\end{theorem}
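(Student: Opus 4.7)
The plan is to solve \eqref{ue1}--\eqref{ue2} via the Duhamel formulation \eqref{duh:eps}, which we rewrite in conservative form
$$
\ue(t)=S^\varepsilon_\alpha(t)u_0-\tfrac{1}{2}\int_0^t\partial_x S^\varepsilon_\alpha(t-\tau)\,(\ue(\tau))^2\,d\tau,
$$
so that the spatial derivative is transferred onto the semi-group kernel. The key point is that, by \eqref{Sae} and the scaling $\|\partial_x p_2(\varepsilon s)\|_1\lesssim(\varepsilon s)^{-1/2}$, one has $\|\partial_x S^\varepsilon_\alpha(s)\|_{L^\infty\to L^\infty}\lesssim(\varepsilon s)^{-1/2}$, whose time integral is finite. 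A standard Banach fixed-point argument in $C([0,T];L^\infty(\R))$, on a small ball around $S^\varepsilon_\alpha(\cdot)u_0$, then yields for $T=T(\varepsilon,\|u_0\|_\infty)$ small enough a unique local-in-time mild solution, and a Gronwall estimate on the difference of two solutions gives uniqueness on the maximal interval of existence.

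The next step is to establish the maximum principle \eqref{est:inf}, which rules out blow-up and upgrades local to global existence. Formally, at a smooth interior maximum $x_0$ of $\ue(\cdot,t)$ one has $\ue_x(x_0,t)=0$, $-\varepsilon\ue_{xx}(x_0,t)\ge 0$, and
$$
\Da\ue(x_0,t)=-G_\alpha\int_\R\frac{\ue(x_0+z,t)-\ue(x_0,t)}{|z|^{1+\alpha}}\,dz\ge 0
$$
thanks to the L\'evy--Khintchine representation \eqref{eqn:LK}--\eqref{LK2}; inserting this into \eqref{ue1} gives $\ue_t(x_0,t)\le 0$, and the symmetric argument handles minima. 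To make this rigorous for merely bounded data (where the essential supremum need not be attained), I would approximate $u_0$ by a sequence $u_0^n$ that is smooth and decays at infinity, apply the pointwise argument to the corresponding smooth decaying solutions $u^{\varepsilon,n}$, and then pass to the limit using the stability of the Duhamel formulation in $L^\infty$. One could equivalently apply the Kato inequality \eqref{ineq:kato} to $\eta(\ue)=(\ue-\mathrm{ess\,sup}\,u_0)_+^2$ against a suitable test function.

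Smoothness for $t>0$ then follows by a straightforward bootstrap on the integral equation: given the uniform bound $\|\ue(\tau)\|_\infty\le\|u_0\|_\infty$, the semi-group property $S^\varepsilon_\alpha(t)=S^\varepsilon_\alpha(t/2)S^\varepsilon_\alpha(t/2)$ combined with the estimates $\|\partial_x^k p_2(\varepsilon s)\|_1\lesssim(\varepsilon s)^{-k/2}$ lets one differentiate the Duhamel formula arbitrarily many times and place all derivatives on the (smooth) kernel on the interval $[a/2,a]$; this yields $\ue\in C_b^\infty(\R\times(a,+\infty))$ for every $a>0$, and the time-derivatives are then recovered from the equation itself. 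Finally, the initial trace is recovered from the classical fact that $p_\alpha(t)*p_2(\varepsilon t)*u_0\to u_0$ in weak-$\ast$ $L^\infty$ and in $L^p_{\mathrm{loc}}$ as $t\to 0^+$, together with the bound
$$
\Big\|\int_0^t\partial_x S^\varepsilon_\alpha(t-\tau)(\ue(\tau))^2\,d\tau\Big\|_\infty\lesssim\sqrt{t/\varepsilon}\,\|u_0\|_\infty^2,
$$
which vanishes as $t\to 0^+$.

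The main obstacle is the maximum principle, because for $L^\infty$ data the supremum at each time need not be attained, so the pointwise ``touching'' argument is not directly available; the approximation by decaying smooth data, combined with $L^\infty$-stability of the mild formulation, is the standard remedy. Everything else is routine once the smoothing properties of $p_2(\varepsilon t)$ and the contraction structure of \eqref{duh:eps} are in place.
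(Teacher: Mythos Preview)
Your Duhamel fixed-point construction, the bootstrap for $C^\infty_b$-regularity, and the recovery of the initial trace are all sound and are essentially what \cite{dgv} does (the paper itself simply cites \cite{dgv} and says its results ``can be easily modified''). The one genuine gap is in your maximum-principle argument. You propose to approximate $u_0\in L^\infty(\R)$ by smooth data $u_0^n$ that \emph{decay at infinity} and then pass to the limit ``using the stability of the Duhamel formulation in $L^\infty$''. But if $u_0$ does not itself decay (think $u_0\equiv 1$), then $\|u_0^n-u_0\|_\infty\not\to 0$, and the $L^\infty$-Gronwall estimate you invoke gives no convergence $u^{\varepsilon,n}\to u^\varepsilon$ at all. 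So the passage to the limit fails exactly in the generic situation where the touching argument was unavailable in the first place. The Kato/Stampacchia alternative you mention runs into the same obstruction: after multiplying by $\eta'(\ue)$ with $\eta(\ue)=(\ue-M)_+^2$ and integrating, the convective term becomes $\int_\R\partial_x\Psi(\ue)\,dx$ with $\Psi'(s)=s\,\eta'(s)$, and this vanishes only if $\Psi(\ue)$ decays at infinity, which is precisely what you are trying to prove.

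The approach actually used in \cite{dgv} (and invoked by the paper in the proof of Proposition~\ref{rem-droniou}) bypasses this issue entirely: it is the \emph{splitting method}, alternating the hyperbolic step $u_t+uu_x=0$ and the linear step $u_t+\Lambda^\alpha u-\varepsilon u_{xx}=0$, each of which preserves $\mathrm{ess\,inf}\,u_0\le u\le\mathrm{ess\,sup}\,u_0$ for trivially different reasons (characteristics for the first, convolution against a probability density for the second). An alternative self-contained fix, closer in spirit to your touching argument, is the standard barrier trick from viscosity-solution theory: replace $\ue$ by $\ue-\delta\,\phi(x)-\delta t$ with a slowly growing weight such as $\phi(x)=\log(1+x^2)$, so that the supremum over $\R$ is attained at a finite point, run the pointwise argument using $|\Lambda^\alpha\phi|+|\phi_{xx}|+|\phi_x|\in L^\infty$, and send $\delta\to 0$. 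Either route closes the gap; the rest of your sketch is fine.
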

%-------------------------------------------------------------------
\begin{proof}
Here, the results from \cite{dgv} can be easily  modified in order to get
 the existence and the regularity of solutions to \rf{ue1}--\rf{ue2} 
with $\varepsilon>0$.
\end{proof}

Here are some elementary properties (comparison principle,~$L^1$-contraction principle and 
non-increase of the~$BV$-semi-norm) of fractal conservation laws that will be needed.
\begin{prop}[\cite{dgv}]\label{rem-droniou}
Let~$\varepsilon>0$ and~$u^\varepsilon$ and~$\widetilde{u^\varepsilon}$ be solutions to~\eqref{ue1}--\eqref{ue2}
with respective initial data~$u_0$ and~$\widetilde{u_0}$ in~$L^\infty(\R)$. Then:
\begin{itemize}
\item if~$u_0 \leq \widetilde{u_0}$ then 
$
u^\varepsilon \leq \widetilde{u^\varepsilon}
$,
\item if~$u_0-\widetilde{u_0} \in L^1(\R)$ then 
$
\|u^\varepsilon-\widetilde{u^\varepsilon}\|_{L^\infty(0,+\infty,L^1)} \leq \|u_0-\widetilde{u_0}\|_1
$,
\item if~$u_0 \in BV(\R$) then 
$
\|u_x^\varepsilon(t)\|_{L^\infty(0,+\infty,L^1)} \leq |u_0|_{BV}
$
\end{itemize}
where~$\|\cdot\|_{L^\infty(0,+\infty,L^1)}$ and~$|\cdot|_{BV}$ denote respectively the~norm in
~$L^\infty(0,+\infty,L^1(\R))$ and the semi-norm in~$BV(\R)$.
\end{prop}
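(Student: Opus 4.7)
The plan is to derive all three properties from Kato's inequality~\eqref{ineq:kato}, which is applicable here because Theorem~\ref{th2.1} supplies classical solutions $u^\varepsilon,\widetilde{u^\varepsilon}\in C^\infty_b(\R\times(a,+\infty))$ for every $a>0$. Setting $w=u^\varepsilon-\widetilde{u^\varepsilon}$ and rewriting the nonlinear difference as $u^\varepsilon u^\varepsilon_x-\widetilde{u^\varepsilon}\widetilde{u^\varepsilon}_x=u^\varepsilon w_x+w\,\widetilde{u^\varepsilon}_x$, subtraction of the two regularized equations gives
$$
w_t+\Lambda^\alpha w-\varepsilon w_{xx}+u^\varepsilon w_x+w\,\widetilde{u^\varepsilon}_x=0.
$$
For any smooth convex $\eta:\R\to\R$, I would multiply by $\eta'(w)$; applying \eqref{ineq:kato} to the nonlocal term and the convexity identity $\eta'(w)w_{xx}=(\eta(w))_{xx}-\eta''(w)w_x^2$ to the viscous term yields the pointwise master inequality
$$
\partial_t\eta(w)+\Lambda^\alpha\eta(w)-\varepsilon(\eta(w))_{xx}+u^\varepsilon\partial_x\eta(w)+w\eta'(w)\,\widetilde{u^\varepsilon}_x\leq 0. \qquad (\ast)
$$

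For the $L^1$-contraction, under the hypothesis $u_0-\widetilde{u_0}\in L^1(\R)$, I would take $\eta=\eta_\delta$ a smooth convex approximation of $|\cdot|$ satisfying $s\eta_\delta'(s)\to|s|$, integrate $(\ast)$ over $\R$, and use~\eqref{ipp} for the nonlocal term together with boundedness of~$u^\varepsilon$ for the local ones to kill the $\Lambda^\alpha$-, $\partial_{xx}$-, and $\partial_x$-contributions. After one integration by parts, the two remaining nonlinear terms collapse into
$$
-\int|w|\,u^\varepsilon_x\,dx+\int|w|\,\widetilde{u^\varepsilon}_x\,dx=-\int|w|\,w_x\,dx=-\tfrac{1}{2}\int\partial_x(w|w|)\,dx=0,
$$
so that $\frac{d}{dt}\int|w|\,dx\leq 0$, which integrated in time gives the claim. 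The comparison principle is obtained in exactly the same way, with $\eta_\delta$ now a smooth convex approximation of $(\cdot)^+$: the same algebra yields $\frac{d}{dt}\int(w)^+\,dx\leq 0$, so $u_0\leq\widetilde{u_0}$ forces $(w)^+\equiv 0$.

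The BV-estimate is a direct corollary of the $L^1$-contraction by the translation invariance of~\eqref{ue1}-\eqref{ue2}: for any $h\in\R$, the map $x\mapsto u^\varepsilon(x+h,t)$ solves the regularized equation with datum $u_0(\cdot+h)$, so
$$
\int|u^\varepsilon(x+h,t)-u^\varepsilon(x,t)|\,dx\leq\int|u_0(x+h)-u_0(x)|\,dx\leq |h|\,|u_0|_{BV};
$$
dividing by $|h|$, letting $h\to 0$, and invoking the smoothness of $u^\varepsilon(t)$ yields $\|u^\varepsilon_x(t)\|_1\leq|u_0|_{BV}$.

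The main obstacle I anticipate lies in justifying the comparison principle when $u_0-\widetilde{u_0}\notin L^1(\R)$: in that case $(w)^+$ need not be integrable a priori, and the naive integration of $(\ast)$ must be replaced by testing against a cutoff $\varphi_R(x)=\psi(x/R)$, with $\psi\in\mathcal{D}(\R)$. One must then control the remainder
$$
\int(w)^+\bigl(\varepsilon\varphi_R''-\Lambda^\alpha\varphi_R+u^\varepsilon\varphi_R'\bigr)\,dx+\tfrac{1}{2}\int\varphi_R'\,((w)^+)^2\,dx
$$
and send $R\to+\infty$. The benign behavior of the local terms (which scale like $R^{-1}$ or $R^{-2}$) combined with the $L^\infty$-bound on~$w$ from Theorem~\ref{th2.1} and the scaling $\|\Lambda^\alpha\varphi_R\|_\infty\leq CR^{-\alpha}$ suffices to close the argument, but the careful balance of these tails against the $L^\infty$-size of $(w)^+$ is what is truly delicate here.
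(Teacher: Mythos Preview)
Your approach differs substantially from the paper's: the paper does not carry out any direct Kato/entropy computation but simply invokes the \emph{splitting method} of \cite{dgv}, observing that both the hyperbolic equation $u_t+uu_x=0$ and the linear fractal/parabolic equation $u_t+\Lambda^\alpha u-\varepsilon u_{xx}=0$ separately enjoy the comparison principle, $L^1$-contraction, and BV bound; these properties then pass to the operator-splitting limit. This sidesteps entirely the integrability issues you wrestle with.

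Your $L^1$-contraction computation and the translation argument for the BV bound are correct in spirit and are standard; the only omission is the a~priori knowledge that $w(t)\in L^1$ so that the integrals of $\Lambda^\alpha\eta(w)$ and $(\eta(w))_{xx}$ vanish, but this can be supplied by a short continuation/Duhamel argument using the regularizing semigroup $S_\alpha^\varepsilon$.

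The genuine gap is in your cutoff argument for the comparison principle with merely $L^\infty$ data. You assert that the scaling $\|\Lambda^\alpha\varphi_R\|_\infty\le CR^{-\alpha}$ together with the $L^\infty$ bound on $(w)^+$ closes the estimate, but this is not so: since $(w)^+$ is only bounded, you need the $L^1$ norm of the remainder terms, and one computes $\|\Lambda^\alpha\varphi_R\|_1=R^{1-\alpha}\|\Lambda^\alpha\psi\|_1$, which diverges for $\alpha<1$ (and stays bounded away from zero for $\alpha=1$). Likewise $\|\varphi_R'\|_1=\|\psi'\|_1$ is independent of $R$, so the transport remainder $\int(w)^+u^\varepsilon\varphi_R'$ is $O(1)$, not $o(1)$. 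The tails therefore do \emph{not} vanish as $R\to\infty$, and no ``balance'' with $\|(w)^+\|_\infty$ rescues this. One workable direct route is a pointwise maximum-principle argument on $e^{-\lambda t}w$ (exploiting $\varepsilon>0$ and the smoothness from Theorem~\ref{th2.1}) rather than an integral one; the paper's splitting approach avoids the issue altogether because comparison for the linear semigroup $S_\alpha^\varepsilon$ is immediate from positivity of the kernel.
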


\begin{proof}[Sketch of the proof]
As explained in~\cite[Remarks 1.2 \& 6.2]{dgv}, these 
properties are immediate consequences of the splitting method developped in~\cite{dgv} and
the facts that both the
hyperbolic equation~$u_t+u u_x=0$ and the fractal equation~$u_t+\Lambda^\alpha u-\varepsilon u_{xx}=0$
satisfy these properties. 
\end{proof}

%-------------------------------------------------------------------
The next proposition provides an estimate on the gradient of~$u^\varepsilon$. 
%-------------------------------------------------------------------
\begin{prop}\label{th:ux}
Let $0<\alpha\leq 1$ and~$u_0 \in L^\infty(\R)$ be non-decreasing.
For each~$\varepsilon>0$, denote by $u^\varepsilon$
the solution to~\rf{ue1}--\rf{ue2}.
Then:
\begin{itemize}
\item $\ue_x(x,t)\geq 0$ for all $x\in\R$ and $t>0$,

\item there exists a constant $C=C(\alpha)>0$ 
such that for all~$\varepsilon>0$,~$p\in [1,{+\infty}]$ and~$t>0$,
\begin{equation}\label{ux dec}
\|\ue_x(t)\|_p\leq  Ct^{-\frac1\alpha(1-\frac1p)} |u_0|_{BV}.
\end{equation}
\end{itemize}
 \end{prop}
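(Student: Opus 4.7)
The plan is to separate the two statements. The positivity of $u^\varepsilon_x$ is elementary; the sharp $L^p$-decay follows from endpoint bounds ($p=1$ and $p=\infty$) combined with a convexity/interpolation trick that is particularly clean because $u^\varepsilon_x\geq 0$.

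\smallskip

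First, I would establish the monotonicity $u^\varepsilon_x(\cdot,t)\geq 0$. Since the coefficients of \eqref{ue1} do not depend on $x$, the translated function $v^\varepsilon(x,t):=u^\varepsilon(x+h,t)$ is the solution of \eqref{ue1}--\eqref{ue2} with initial datum $u_0(\cdot+h)$. For $h>0$, the monotonicity of $u_0$ gives $u_0(\cdot+h)\geq u_0(\cdot)$ a.e., and the comparison principle of Proposition \ref{rem-droniou} yields $u^\varepsilon(x+h,t)\geq u^\varepsilon(x,t)$ for all $x$, $t>0$ and $h>0$. Dividing by $h$ and letting $h\to 0^+$ (using the smoothness of $u^\varepsilon$ for $t>0$ from Theorem \ref{th2.1}) gives $u^\varepsilon_x\geq 0$.

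\smallskip

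Next, I would deduce the $L^1$ and $L^\infty$ endpoint estimates for the gradient. The $L^1$-bound is immediate: by Proposition \ref{rem-droniou}, $\|u^\varepsilon_x(t)\|_1=|u^\varepsilon(t)|_{BV}\leq |u_0|_{BV}$ (where the first equality uses $u^\varepsilon_x\geq 0$). The $L^\infty$ bound
\begin{equation*}
\|u^\varepsilon_x(t)\|_\infty\leq C\, t^{-\frac{1}{\alpha}}|u_0|_{BV}
\end{equation*}
is the heart of the matter and is exactly the kind of sharp smoothing estimate that the key lemma of \cite{kmx} recalled in Lemma \ref{KMX} is designed to deliver, applied to the regularized semigroup $S^\varepsilon_\alpha$. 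Typically one differentiates the Duhamel formula \eqref{duh:eps}, bounds the linear term by $\|S^\varepsilon_\alpha(t)(u_0)_x\|_\infty\leq \|p_\alpha(t)\|_\infty |u_0|_{BV}\leq C t^{-1/\alpha}|u_0|_{BV}$ using \eqref{homogeneity}--\eqref{EP} and the fact that $p_2(\varepsilon t)$ is a probability density, and controls the nonlinear Duhamel integral by a fixed-point/continuation argument relying on the already established bound $\|u^\varepsilon\|_\infty\leq \|u_0\|_\infty$. This $\varepsilon$-uniform $L^\infty$ decay of $u^\varepsilon_x$ is where I expect the main technical obstacle to lie; it is also the reason why the proof relies on the dedicated Lemma \ref{KMX}.

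\smallskip

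Finally, since $u^\varepsilon_x\geq 0$, the elementary estimate
\begin{equation*}
\|u^\varepsilon_x(t)\|_p^p=\int_\R (u^\varepsilon_x)^{p-1} u^\varepsilon_x\,dx\leq \|u^\varepsilon_x(t)\|_\infty^{p-1}\|u^\varepsilon_x(t)\|_1
\end{equation*}
(valid for $p\in[1,+\infty)$, and trivially for $p=+\infty$) combined with the two endpoint bounds yields
\begin{equation*}
\|u^\varepsilon_x(t)\|_p\leq \bigl(C t^{-1/\alpha}|u_0|_{BV}\bigr)^{1-1/p}|u_0|_{BV}^{1/p}=C\, t^{-\frac{1}{\alpha}(1-\frac{1}{p})}|u_0|_{BV},
\end{equation*}
which is precisely \eqref{ux dec}. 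Note that one could also invoke the standard $L^1$--$L^\infty$ interpolation, but exploiting positivity of $u^\varepsilon_x$ avoids any loss of constants and makes the dependence on $|u_0|_{BV}$ transparent.
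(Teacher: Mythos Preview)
Your argument for the first bullet (monotonicity of $u^\varepsilon$ via translation and comparison) is exactly the paper's, so that part is fine.

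The gap is in your second bullet. You propose to obtain the $L^\infty$ endpoint by a Duhamel argument and then interpolate, and you describe Lemma~\ref{KMX} as if it were a semigroup/Duhamel estimate. It is not: Lemma~\ref{KMX} is an energy method (a Nash--Moser type iteration based on the Stroock--Varopoulos and Nash inequalities) whose input is the differential inequality~\eqref{assumptionii}, not the Duhamel formula. More importantly, the Duhamel route you sketch does not close uniformly in $\varepsilon$ when $\alpha\le 1$: differentiating~\eqref{duh:eps} and estimating the nonlinear term by
\[
\int_0^t \|\partial_x p_\alpha(t-\tau)\|_1\,\|u^\varepsilon(\tau)\|_\infty\,\|u^\varepsilon_x(\tau)\|_\infty\,d\tau
\]
leads to a kernel $\|\partial_x p_\alpha(s)\|_1\sim s^{-1/\alpha}$ that is not integrable at $s=0$ for $\alpha<1$ (and only logarithmically so at $\alpha=1$). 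Shifting the derivative onto the heat kernel $p_2(\varepsilon\,\cdot)$ makes the integral converge but produces an $\varepsilon$-dependent constant, which defeats the purpose. So the ``fixed-point/continuation'' you allude to does not go through here.

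What the paper does instead is apply Lemma~\ref{KMX} \emph{directly} to $v\equiv u^\varepsilon_x$, which yields \emph{all} $L^p$ bounds simultaneously. One differentiates~\eqref{ue1} to get
\[
v_t+\Lambda^\alpha v-\varepsilon v_{xx}+(u^\varepsilon v)_x=0,
\]
multiplies by $v^{p-1}$ (legitimate since $v\ge 0$) and integrates. Integration by parts gives $-\varepsilon\int v_{xx}v^{p-1}\,dx\ge 0$ and $\int (u^\varepsilon v)_x v^{p-1}\,dx=\frac{p-1}{p}\int v^{p+1}\,dx\ge 0$, so the differential inequality~\eqref{assumptionii} holds. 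Lemma~\ref{KMX} then delivers~\eqref{ux dec} for every $p\in[1,\infty]$ at once, with a constant independent of $\varepsilon$. Your interpolation step is therefore unnecessary, and your $L^1$ endpoint observation (via Proposition~\ref{rem-droniou}) is exactly what is used to feed $\|v\|_{L^\infty(0,\infty;L^1)}\le |u_0|_{BV}$ into Lemma~\ref{KMX}.
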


%-------------------------------------------------------------------
\begin{proof}
For any fixed real~$h$, the function~$u^\varepsilon(\cdot +h,\cdot)$ is the
solution to \eqref{ue1}--\eqref{ue2} with the initial datum
$u_0(\cdot +h)$. Consequently, for non-decreasing $u_0$ and for
$h>0$, the inequality $u_0(\cdot+h)\geq u_0(\cdot)$  and the comparison principle 
imply
$u^\varepsilon(\cdot +h,\cdot)\geq u^\varepsilon(\cdot,\cdot)$ which
gives $u^\varepsilon_x\geq 0$.  

To show the decay of the~$L^p$-norm, one slightly modifies the arguments from~\cite[Proof of
Lemma 3.1] {kmx}. One 
shall use Lemma~\ref{KMX} with~$v \equiv u_x^\varepsilon$. It is clear that~$v$ 
satisfies the required regularity: for all~$a>0$
$$
v \in C^\infty_b(\R \times (a,+\infty)) \cap L^\infty(0,+\infty,L^1(\R)),
$$ 
thanks to Proposition~\ref{rem-droniou}
ensuring that
$$
\|v\|_{L^\infty(0,+\infty,L^1)} = \| u_x^\varepsilon \|_{L^\infty(0,+\infty,L^1)} \leq |u_0|_{BV}.
$$
It thus rests to show that~$v$ satisfies~\eqref{assumptionii}.
By interpolation of the inequality above and the $L^\infty$-bound on~$v$ from
Theorem~\ref{th2.1}, one 
sees that for all~$p \in [1,+\infty]$ and all~$t>0$, 
$$
v(t) \in L^p(\R) \quad \mbox{and} \quad v_t(t),\Lambda^\alpha v(t),v_x(t),v_{xx}(t) \in L^\infty(\R). 
$$
Hence, for~$p \in [2,+\infty)$, one can multiply the equation for
$v$
$$
v_t+\Da v -\varepsilon v_{xx} +(\ue\ue_x)_x=0,
$$
by $v^{p-1}$ to obtain after integration: 
\begin{equation}\label{ue:Lp:ineq}
\int_{\R} v_t v^{p-1} \, dx +\int_\R v^{p-1}\Da v\,dx -
\varepsilon \int_\R  v_{xx} v^{p-1}\,dx +\frac{p-1}{p}
\int_\R v^{p+1}\,dx=0;
\end{equation}
here one has used that~
$\lim_{|x| \rightarrow +\infty} v(x,t)=0$ (since~$v(t) \in C^\infty_b(\R) \cap L^1(\R)$) to drop the boundary terms
providing from integration by parts. Integrating again by parts, one sees that 
\begin{equation*}
-\varepsilon \int_\R v_{xx}\Phi(v) \,dx=\varepsilon \int_{\R} v_x^2 \Phi'(v) \, dx \geq 0
\end{equation*} 
for all non-decreasing function~$\Phi \in C^1(\R)$ with~$\Phi(0)=0$;
Choosing~$\Phi(v)=|v|^{p-2} v$, one gets
\begin{equation}\label{Kato:Lap}
-\varepsilon \int_\R v_{xx} |v|^{p-2} \, v \,dx  \ge 0. 
\end{equation} 
We deduce from~\eqref{ue:Lp:ineq},~\eqref{Kato:Lap} and the non-negativity of~$v$ 
that
$$
\int_{\R} v_t |v|^{p-2}v \, dx+\int_\R |v|^{p-2} v \Lambda^\alpha v dx \leq 0
$$
for all~$p \in [2,+\infty)$ and~$t>0$. This is exactly the required differential inequation in~\eqref{assumptionii}.
Lemma~\ref{KMX} thus completes the proof.
\end{proof}
%-------------------------------------------------------------------
We can now give asymptotic stability estimates uniform in~$\varepsilon$.
%-------------------------------------------------------------------
\begin{theorem}\label{thm:as-stab} Let $\alpha\in  (0,2]$.
  Consider two initial data $u_0$ 
  and $\widetilde u_0$ in~$L^\infty(\R)$ such that   
  ~$\widetilde u_0$ is non-decreasing and
  $u_0-\widetilde u_0\in L^1(\R)$.  
  For each~$\varepsilon>0$, denote by~$\ue$ and~$\widetilde \ue$ the corresponding 
  solutions to~\rf{ue1}--\rf{ue2}. 
  Then, 
  there exists a constant $C=C(\alpha)>0$ such
  for all~$\varepsilon>0$,~$p\in [1,{+\infty}]$
   and~$t>0$
\begin{equation}\label{stab:est:ep}
\|\ue(t)-\widetilde \ue(t)\|_p\leq Ct^{-\frac1\alpha(1-\frac1p)}\|u_0-\widetilde u_0\|_1 \, .
\end{equation}
\end{theorem}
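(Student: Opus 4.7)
The plan is to reduce the claim to the abstract decay mechanism of Lemma~\ref{KMX}, following the template used in the proof of Proposition~\ref{th:ux}, by deriving for $w := \ue-\widetilde{\ue}$ a differential inequality of the form
\begin{equation*}
\int_\R w_t\,|w|^{p-2}w\,dx+\int_\R |w|^{p-2}w\,\Da w\,dx\;\leq\;0\qquad(p\in[2,+\infty)).
\end{equation*}
Proposition~\ref{rem-droniou} first supplies the $\varepsilon$-uniform control $\|w(t)\|_1\leq\|u_0-\widetilde{u_0}\|_1$, and Theorem~\ref{th2.1} yields $w\in C^\infty_b(\R\times(a,+\infty))$ for every $a>0$. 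Together with the maximum-principle bound on $\|w\|_\infty$ and interpolation, this puts $w(t)$ and its derivatives in all the Lebesgue spaces needed to invoke Lemma~\ref{KMX}; the boundedness of $w_x$ combined with $L^1$-integrability also forces $w(t,x)\to 0$ as $|x|\to +\infty$. Subtracting the equations~\eqref{ue1} for $\ue$ and $\widetilde{\ue}$ and using $\ue\ue_x-\widetilde{\ue}\widetilde{\ue}_x=\tfrac{1}{2}((\ue+\widetilde{\ue})w)_x$ produces
\begin{equation*}
w_t+\Da w-\varepsilon w_{xx}+\tfrac{1}{2}\bigl((\ue+\widetilde{\ue})w\bigr)_x=0.
\end{equation*}

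Multiplying by $|w|^{p-2}w$ and integrating over $\R$, the viscous contribution is nonnegative by the same argument as in~\eqref{Kato:Lap}, while the nonlinear contribution becomes, after a standard integration by parts exploiting $|w|^{p-2}w\,w_x=\tfrac{1}{p}\partial_x|w|^p$, equal to $\tfrac{p-1}{2p}\int_\R(\ue_x+\widetilde{\ue}_x)|w|^p\,dx$. The main obstacle is that $\ue_x$ has no \emph{a priori} sign, since $u_0$ is not assumed monotone. The rescue is the algebraic identity $\ue_x+\widetilde{\ue}_x=2\widetilde{\ue}_x+w_x$ combined with $\int_\R w_x\,|w|^p\,dx=0$ (an immediate consequence of $w(t,\cdot)\to 0$ at infinity, since $|w|^p w_x$ is the $x$-derivative of $\tfrac{|w|^{p+1}}{p+1}\sgn(w)$), which collapses the nonlinear term to
\begin{equation*}
\frac{p-1}{p}\int_\R \widetilde{\ue}_x\,|w|^p\,dx\;\geq\;0;
\end{equation*}
the sign is granted by the monotonicity $\widetilde{\ue}_x\geq 0$, inherited from the non-decreasing character of $\widetilde{u_0}$ through Proposition~\ref{th:ux}. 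Dropping this nonnegative term together with the viscous one yields the announced differential inequality.

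Lemma~\ref{KMX} then applies to $w$ and delivers, for some $C=C(\alpha)$ independent of $\varepsilon$ and $p$,
\begin{equation*}
\|w(t)\|_p\leq C\,t^{-\frac{1}{\alpha}(1-\frac{1}{p})}\|u_0-\widetilde{u_0}\|_1\qquad\text{for all }p\in[2,+\infty).
\end{equation*}
The endpoint $p=1$ is just the $L^1$-contraction, $p=+\infty$ is recovered by letting $p\to+\infty$ (using the uniformity of $C$ in $p$), and the intermediate range $p\in(1,2)$ follows by H\"older interpolation between $p=1$ and any fixed $p>2$.
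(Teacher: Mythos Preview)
Your proof is correct and follows essentially the same route as the paper's: derive the equation for $w=\ue-\widetilde{\ue}$, multiply by $|w|^{p-2}w$, discard the viscous term via~\eqref{Kato:Lap}, reduce the nonlinear term to $\frac{p-1}{p}\int_\R\widetilde{\ue}_x\,|w|^p\,dx\ge 0$ by integration by parts, and conclude with Lemma~\ref{KMX}. Two minor points: the non-negativity $\widetilde{\ue}_x\ge 0$ should be justified directly from the comparison principle (Proposition~\ref{rem-droniou}), since Proposition~\ref{th:ux} is stated only for $\alpha\le 1$, and your final paragraph on the endpoints is unnecessary, as Lemma~\ref{KMX} already delivers the estimate for all $p\in[1,+\infty]$.
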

%-------------------------------------------------------------------

\Proof 
The proof follows the arguments from~\cite[Proof of
Lemma 3.1] {kmx} by skipping the additional term providing from~$-\varepsilon u^\varepsilon_{xx}$.
That is to say, one uses again Lemma~\ref{KMX} with~$v=u^\varepsilon-\widetilde{u^\varepsilon}$. 
First, the
$L^1$-contraction principle (see Proposition~\ref{rem-droniou}) ensures that~$v$ satisfies the required regularity with
$$
\|\ue-\widetilde \ue\|_{L^\infty(0,+\infty,L^1)} \leq \|u_0-\widetilde{u_0}\|_1.
$$
In particular, once again 
the interpolation of the~$L^1$- and
~$L^\infty$-norms implies that~$v$ is~$L^p$ in space for all time and all~$p \in [1,+\infty]$.
Second, one takes~$p \in [2,+\infty)$ (so that all the integrands below are integrable) 
and one multiplies the difference of 
the equations satisfied by~$u^\varepsilon$ and~$\widetilde{u^\varepsilon}$ by~$|v|^{p-2} v$. One gets
after integration:
\begin{multline}\label{precision}
\int_{\R} v_t |v|^{p-2}v \, dx+\int_{\R} |v|^{p-2} v \Lambda^\alpha v \, dx\\
-\varepsilon \int_\R v_{xx} |v|^{p-2} v  \, dx+\frac{1}{2} \int_{\R} 
\left(v^2+2v \widetilde{u^\varepsilon}\right)_x |v|^{p-2} v \, dx=0.
\end{multline}
The last term of the left-hand side of this equality is non-negative, since integrations by parts give
\begin{align*}
& \int_{\R} 
\left(v^2+2v \widetilde{u^\varepsilon}\right)_x |v|^{p-2} v \, dx\\
 & =  \int_{\R} 2v_x |v|^p \, dx+
\int_{\R} 2\widetilde{u^\varepsilon}v_x |v|^{p-2} v \, dx+
\int_{\R} 2\widetilde{u_x^\varepsilon} |v|^{p}  \, dx, \\
& =  2 \left(1-\frac{1}{p} \right) \int_{\R} 2\widetilde{u_x^\varepsilon} |v|^{p}  \, dx \geq 0
\end{align*}
(once again the boundary terms can be skipped since~$v$ vanishes for large~$x$).
Moreover the third term of~\eqref{precision} is also non-negative by~\eqref{Kato:Lap}. 
One easily deduces the desired inequality~\eqref{assumptionii} and completes the proof by Lemma~\ref{KMX}.  
\endProof

%%%%%%%%%%%%%%%%%%%%%%%%%%%%%%%%%%%%%%%%%%%%%%%%%%%%%%%%%%%%%%%%%%%%%%%%%%%%%%%%%%
\begin{theorem} \label{lem:lin:as} Let $0<\alpha< 1$ and~$u_0 \in L^\infty(\R)$ be non-decreasing. 
For each~$\varepsilon>0$, denote by~$\ue$ the solution to~\rf{ue1}--\rf{ue2}.
Then, there exists $C=C(\alpha)>0$ such that for all~$\varepsilon>0$,
~$p\in [1,{+\infty}]$ and~$t>0$
\begin{equation*}
\|\ue(t)-S^\varepsilon_\alpha(t)u_0\|_p
\leq C\|u_0\|_\infty |u_0|_{BV} t^{1-\frac1\alpha(1-\frac1p)}
\end{equation*}
(where~$\{S^\varepsilon_\alpha(t)\}_{t>0}$ is generated by~$-\Da+\varepsilon \partial^2_x$).  
\end{theorem}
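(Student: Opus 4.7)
My plan is to use the Duhamel formulation~\eqref{duh:eps}, which gives
\begin{equation*}
u^\varepsilon(t)-S^\varepsilon_\alpha(t)u_0=-\int_0^t S^\varepsilon_\alpha(t-\tau)\,(u^\varepsilon u^\varepsilon_x)(\tau)\,d\tau,
\end{equation*}
and to estimate the~$L^p$-norm of the right-hand side by splitting the time integration at~$\tau=t/2$, one regime relying on the smoothing effect of the semi-group and the other on its~$L^p$-contraction property.

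First I would record two bounds on~$S^\varepsilon_\alpha$ that are uniform in~$\varepsilon>0$. Rewriting~\eqref{Sae} as~$S^\varepsilon_\alpha(s)g=p_\alpha(s)*p_2(\varepsilon s)*g$ with~$p_2(\varepsilon s)$ a probability density, Young's convolution inequality combined with the self-similarity~\eqref{homogeneity} and the integrability~\eqref{EP} yields
\begin{equation*}
\|S^\varepsilon_\alpha(s)g\|_p \leq Cs^{-\frac{1}{\alpha}(1-\frac{1}{p})}\|g\|_1
\quad\text{and}\quad
\|S^\varepsilon_\alpha(s)g\|_p \leq \|g\|_p
\end{equation*}
for all~$p\in[1,+\infty]$ and~$s>0$. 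Next, noting that a bounded non-decreasing datum satisfies~$|u_0|_{BV}=u_+-u_-\leq 2\|u_0\|_\infty<+\infty$, the maximum principle of Theorem~\ref{th2.1} combined with Proposition~\ref{th:ux} (used once with $p=1$ and once with general $p$) gives
\begin{equation*}
\|(u^\varepsilon u^\varepsilon_x)(\tau)\|_1 \leq C\|u_0\|_\infty|u_0|_{BV}
\quad\text{and}\quad
\|(u^\varepsilon u^\varepsilon_x)(\tau)\|_p \leq C\|u_0\|_\infty|u_0|_{BV}\,\tau^{-\frac{1}{\alpha}(1-\frac{1}{p})}.
\end{equation*}

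Splitting the Duhamel integral at~$\tau=t/2$, on~$[0,t/2]$ (where~$t-\tau\sim t$) I would use the smoothing estimate from~$L^1$ to~$L^p$ together with the~$L^1$-control of~$u^\varepsilon u^\varepsilon_x$; on~$[t/2,t]$ (where~$\tau\sim t$) I would use the~$L^p$-contraction of the semi-group together with the~$L^p$-control of~$u^\varepsilon u^\varepsilon_x$. A direct computation of the two resulting time integrals shows that both contributions are of order~$t^{1-\frac{1}{\alpha}(1-\frac{1}{p})}\|u_0\|_\infty|u_0|_{BV}$, which yields the announced bound.

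The main obstacle, and the very reason the two-regime splitting is needed, is that the single-regime estimate~$\int_0^t\tau^{-\frac{1}{\alpha}(1-\frac{1}{p})}\,d\tau$ diverges as soon as~$\frac{1}{\alpha}(1-\frac{1}{p})\geq 1$, i.e. as soon as~$p\geq\frac{1}{1-\alpha}$ (which does happen for~$\alpha<1$ and~$p$ large). The above splitting exactly balances the singularity of the smoothing estimate near~$\tau=t$ against the singularity of the pointwise-in-time bound on~$u^\varepsilon u^\varepsilon_x$ near~$\tau=0$; in the borderline case~$p=\frac{1}{1-\alpha}$ this only produces a harmless~$\log 2$ factor that is absorbed in the constant.
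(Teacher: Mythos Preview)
Your proposal is correct and follows essentially the same route as the paper: start from the Duhamel representation~\eqref{duh:eps}, split the time integral at $t/2$, and on $[0,t/2]$ use the $L^1\!\to\!L^p$ smoothing of $S^\varepsilon_\alpha$ together with the (time-uniform) $L^1$-bound on $u^\varepsilon u^\varepsilon_x$, while on $[t/2,t]$ use the $L^p$-contraction of $S^\varepsilon_\alpha$ together with the $L^p$-decay of $u^\varepsilon u^\varepsilon_x$ from Proposition~\ref{th:ux}. The paper phrases the same idea via a single Young-type estimate with an intermediate exponent $q$ and then specializes to $q=1$ and $q=p$ on the two subintervals; your treatment of the borderline case $p=\frac{1}{1-\alpha}$ (a $\log 2$ constant) matches the paper's observation that $\beta\mapsto\frac{2^\beta-1}{\beta\,2^{\beta-1}}$ extends continuously through $\beta=0$.
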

%-------------------------------------------------------------------
\begin{proof}
  Using the integral equation \rf{duh:eps} we immediately obtain
\begin{equation}\label{duh:p}
  \|\ue(t)-S^\varepsilon_\alpha(t)u_0\|_p
  \leq\int_0^t \left\|S^\varepsilon_\alpha(t-\tau)\ue(\tau)\ue_x(\tau)\right\|_p\;d\tau.
\end{equation}
Now, we estimate the integral in the right-hand side of \rf{duh:p}
using the $L^p$-decay of the semi-group $S^\varepsilon_\alpha(t)$ as
well as inequalities \rf{est:inf} and \rf{ux dec}. Indeed, it follows
from \rf{homogeneity}-\rf{EP} that
$$
\|p_2(\varepsilon t)\|_1=1\quad \mbox{and}\quad
\|p_\alpha(t)\|_r=t^{-\frac1\alpha(1-\frac1r)}\|p_\alpha(1)\|_r
$$ 
for every $r\in [1,+\infty]$. Hence, by the Young inequality for the
convolution and inequalities \rf{est:inf}, \rf{ux dec}, we obtain
\begin{equation}\label{Sea:est}
\begin{split}
\|S^\varepsilon_\alpha(t-\tau)&\ue(\tau)\ue_x(\tau)\|_p\\
&\leq
\|p_\alpha(t-\tau)*(\ue(\tau)\ue_x(\tau))\|_p,\\
&\leq C(t-\tau)^{-\frac1\alpha(\frac1q-\frac1p)}
\|\ue(\tau)\|_\infty\|\ue_x(\tau)\|_q,\\
&\leq C(t-\tau)^{-\frac1\alpha(\frac1q-\frac1p)}
\|u_0\|_\infty|u_0|_{BV} \tau^{-\frac1\alpha(1-\frac1q)},
\end{split}
\end{equation}
for all $1\leq q\leq p\leq {+\infty}$, $t>0$, $\tau \in (0,t)$, where the constant~$C$
only depends on~$\max_{r \in [1,+\infty]} \|p_\alpha(1)\|_r$ and the constant in~\eqref{ux dec}.

Next, we decompose the integral on the right-hand side of \rf{duh:p}
as follows
$\int_0^t...\;d\tau=\int_0^{t/2}...\;d\tau+\int_{t/2}^t...\;d\tau$ and
we bound both integrands by using inequality \rf{Sea:est} either with
$q=1$ or with $q=p$. This leads to the following inequality
\begin{equation}
\begin{split}
\|&\ue(t)-S^\varepsilon_\alpha(t)u_0\|_p\\
&\leq C\|u_0\|_\infty |u_0|_{BV} \Bigg(
\int_0^{t/2}(t-\tau)^{-\frac1\alpha(1-\frac1p)}\;d\tau
+ \int_{t/2}^t \tau^{-\frac1\alpha(1-\frac1p)}\; d\tau\Bigg),\\
&=C \|u_0\|_\infty |u_0|_{BV} \frac{2^{\beta}-1}{\beta 2^{\beta-1}}\,t^{\beta},
\end{split}
\end{equation}
where~$\beta \equiv 1-\frac{1}{\alpha} \left(1-\frac{1}{p} \right)$. 
It is readily seen that~$\beta \in \R \rightarrow \frac{2^{\beta}-1}{\beta 2^{\beta-1}}$ is positive and continuous
and that~$p \in [1,+\infty) \rightarrow 1-\frac{1}{\alpha} \left(1-\frac{1}{p} \right)$ is bounded. 
This completes the proof
of Theorem~\ref{lem:lin:as}.
\end{proof}

%%%%%%%%%%%%%%%%%%%%%%%%%%%%%%%%%%%%%%%%%%%%%%%%%%%%%%%%%%%%%%%%%%%%%

\section{Entropy solution: parabolic approximation and self-similarity}
\label{sec:convergence}

In this section, we 
state the result on the convergence, as $\varepsilon \to 0$,
of solutions $\ue$ of \rf{ue1}--\rf{ue2}
 toward the entropy solution $u$ of
\rf{e1.1}--\rf{e1.2}. We also prove Theorem~\ref{thm:a=1:self} about
self-similar entropy solutions in the case $\alpha=1$. 

Together with the general fractal conservation law
\eqref{eg1.1}--\eqref{eg1.2}, we study the associated regularized
problem
\begin{eqnarray}
&&\ue_t+\Da \ue -\varepsilon \ue_{xx} +(f(\ue))_x=0, \quad x\in\R,\; t>0,\label{uge1}\\ 
&&\ue(x,0)=u_0(x) \label{uge2}
\end{eqnarray}
where $f \in C^\infty(\R)$.  Hence, by results of \cite{dgv} (see also
Theorem \ref{th2.1}), problem~\rf{uge1}-\rf{uge2} admits a unique,
global-in-time, smooth solution $\ue$.

%---------------------------------------------
\begin{theorem}\label{thm:convg} 
Let $u_0 \in L^\infty(\R)$. For each~$\varepsilon>0$, let
$\ue$ be the solution to \rf{uge1}--\rf{uge2} and
$u$ be the entropy solution to \rf{eg1.1}--\rf{eg1.2}.  Then,
for every $T>0$, $\ue \rightarrow u$ in $C([0,T];L^1_{loc}(\R))$ as
$\varepsilon \rightarrow 0$.
\end{theorem}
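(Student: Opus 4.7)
\medskip

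\textbf{Proof proposal.} The plan is to apply the standard vanishing viscosity strategy adapted to the nonlocal setting, splitting the argument into a BV case (where compactness is easy) and then reducing the general $L^\infty$ case to it via the $L^1$-contraction principle.

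\emph{Step 1: BV case.} Assume first that $u_0 \in L^\infty(\R) \cap BV(\R)$. Theorem~\ref{th2.1} furnishes a uniform $L^\infty$-bound on $\{u^\varepsilon\}_{\varepsilon>0}$, while Proposition~\ref{rem-droniou} supplies a uniform bound $\|u^\varepsilon_x(t)\|_{L^1}\le|u_0|_{BV}$, hence spatial compactness of $\{u^\varepsilon(t,\cdot)\}$ in $L^1_{loc}(\R)$ for each $t$. Time equicontinuity in $L^1_{loc}$ follows from the equation itself: for a test function $\psi\in\mathcal{D}(\R)$, the derivative $\partial_t\int u^\varepsilon\psi\,dx$ is controlled in terms of $\|u^\varepsilon\|_\infty$, $|u_0|_{BV}$, $\|\psi\|_{W^{2,\infty}}$, and the integration-by-parts formula~\eqref{ipp} applied to $\Lambda^\alpha$, uniformly in $\varepsilon$ (using $\|\Lambda^\alpha\psi\|_1<\infty$). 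By Arzel\`a--Ascoli and a diagonal argument, a subsequence $u^{\varepsilon_k}$ converges in $C([0,T];L^1_{loc}(\R))$ to some limit $u\in L^\infty(\R\times(0,T))\cap C([0,T];L^1_{loc}(\R))$ with $u(0)=u_0$.

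\emph{Step 2: Limit satisfies the entropy inequality.} Given $\eta\in C^2(\R)$ convex with $\phi'=\eta' f'$, multiply equation~\eqref{uge1} by $\eta'(u^\varepsilon)\varphi$, $\varphi\in\mathcal{D}(\R\times[0,+\infty))$, $\varphi\ge 0$. The parabolic term gives $-\varepsilon\int\eta''(u^\varepsilon)(u^\varepsilon_x)^2\varphi+\varepsilon\int\eta(u^\varepsilon)\varphi_{xx}$; the first piece is nonpositive and so can be dropped, and the second is $O(\varepsilon)$. For the nonlocal term, I would use the splitting~\eqref{eqn:LK} and Kato's inequality~\eqref{ineq:kator} on $\Lambda_r^{(\alpha)}$, together with~\eqref{ipp}, to arrive at
\begin{equation*}
\int_\R\!\!\int_0^{+\infty}\!\!\Big(\eta(u^\varepsilon)\varphi_t+\phi(u^\varepsilon)\varphi_x-\eta(u^\varepsilon)\Lambda_r^{(\alpha)}\varphi-\varphi\eta'(u^\varepsilon)\Lambda_r^{(0)}u^\varepsilon\Big)dx\,dt+\int_\R\eta(u_0)\varphi(x,0)\,dx\ge -C\varepsilon.
\end{equation*}
Letting $\varepsilon=\varepsilon_k\to 0$, strong $L^1_{loc}$-convergence and the uniform $L^\infty$-bound handle the first three integrands; the nonlocal piece $\varphi\eta'(u^\varepsilon)\Lambda_r^{(0)}u^\varepsilon$ is also fine because $\Lambda_r^{(0)}$ is a bounded operator on $L^\infty$ given by an absolutely convergent integral against the kernel $|z|^{-1-\alpha}\mathbf{1}_{|z|>r}\in L^1$, so dominated convergence applies on the support of $\varphi$. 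Thus $u$ satisfies Definition~\ref{def:entropy}, and by uniqueness (Theorem~\ref{thm:entropy}) it equals the entropy solution; the standard subsequence argument then yields convergence of the whole family.

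\emph{Step 3: General $L^\infty$ case.} Pick $u_0^n\in L^\infty\cap BV$ with $u_0^n\to u_0$ in $L^1_{loc}(\R)$ and $\|u_0^n\|_\infty\le\|u_0\|_\infty$ (e.g.\ truncate and mollify). Let $u^{n,\varepsilon}$, $u^n$, $u^\varepsilon$, $u$ denote the corresponding regularized and entropy solutions. Proposition~\ref{rem-droniou} gives $\|u^{n,\varepsilon}-u^\varepsilon\|_{L^\infty_t L^1_x([0,T]\times K_R)}\le C_R\|u_0^n-u_0\|_{L^1(K_{R'})}+\text{tail}$ after a standard finite-propagation/weight trick; similarly for the entropy solutions (the $L^1$-contraction for~\eqref{eg1.1}--\eqref{eg1.2} is part of the well-posedness theory of~\cite{A07}). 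Combining these with Step~1 applied to $u_0^n$ and a diagonal argument in $n,\varepsilon$ delivers $u^\varepsilon\to u$ in $C([0,T];L^1_{loc}(\R))$.

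\emph{Main obstacle.} The delicate point is passing to the limit in the entropy inequality in a way that is genuinely valid for the nonlocal operator and with merely $L^1_{loc}$ convergence: the term $\eta'(u^\varepsilon)\Lambda_r^{(0)}u^\varepsilon$ depends on the \emph{global} values of $u^\varepsilon$ through the L\'evy kernel, so one must exploit the absolute integrability for $|z|>r$ to localize. This is exactly why the L\'evy--Khintchine splitting~\eqref{eqn:LK} is kept in Definition~\ref{def:entropy}, and it is what makes the $\varepsilon\to 0$ passage clean.
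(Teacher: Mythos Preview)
Your Steps~1 and~2 are essentially correct, and the passage to the limit in the entropy inequality matches what the paper does. The difference---and the gap---is in how you handle general $L^\infty$ data.

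The paper does \emph{not} split into a BV case and a general case. It proves compactness directly for arbitrary $u_0\in L^\infty(\R)$ by mollification: setting $u^\varepsilon_\mu = u^\varepsilon *_x \rho_\mu$, it shows that $\{u^\varepsilon_\mu\}_\varepsilon$ is equi-Lipschitz in $(x,t)$ (hence compact by Ascoli--Arzel\`a), and then proves $\sup_\varepsilon \|u^\varepsilon - u^\varepsilon_\mu\|_{C([0,T];L^1([-R,R]))}\to 0$ as $\mu\to 0$. This last step is the crux and rests on a \emph{localized} $L^1$-contraction estimate (Proposition~\ref{thm:fi}):
\[
\int_{-R}^R |u^\varepsilon(x,t)-\widetilde{u}^\varepsilon(x,t)|\,dx \le \int_{-R-Lt}^{R+Lt} S_\alpha^\varepsilon(t)|u_0-\widetilde{u}_0|(x)\,dx,
\]
applied with $\widetilde u_0 = u_0(\cdot - y)$, together with a tail estimate for $S_\alpha^\varepsilon$ (Lemma~\ref{lem:translation}) to make the right-hand side small uniformly in $\varepsilon\in(0,1]$.

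Your Step~3 needs precisely this ingredient, and that is where the phrase ``standard finite-propagation/weight trick'' hides a genuine difficulty. The operator $\Lambda^\alpha$ has \emph{no} finite speed of propagation: the L\'evy kernel $|z|^{-1-\alpha}$ is heavy-tailed, so the usual hyperbolic cone argument is unavailable. Proposition~\ref{rem-droniou} only gives the global contraction $\|u^{n,\varepsilon}-u^\varepsilon\|_{L^\infty_t L^1_x(\R)}\le \|u_0^n-u_0\|_{L^1(\R)}$, which is useless unless $u_0^n-u_0\in L^1(\R)$; for a generic $u_0\in L^\infty(\R)$ (think $u_0=\sin$) no BV approximant can satisfy this. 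The localized estimate that \emph{does} hold has the semigroup $S_\alpha^\varepsilon$ on the right, and proving it requires the Kruzhkov doubling-of-variables machinery of~\cite{A07} adapted to include $-\varepsilon\partial_x^2$; controlling its right-hand side uniformly in $\varepsilon$ then needs the tail decay of $p_\alpha$. Without these two pieces (the paper's Proposition~\ref{thm:fi} and Lemma~\ref{lem:translation}), your Step~3 does not close. Once you have them, your BV-reduction route would also work---but at that point it is no shorter than the paper's direct argument.
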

%-----------------------------------------------

The proof of Theorem~\ref{thm:convg} is given in Appendix~\ref{correction-point2-appendix}.

\begin{rem}\label{rem-non-optimal}
This result actually holds true for only locally Lipschitz-continuous fluxes~$f$.
More generally, multidimensional fractal
conservation laws with source terms $h=h(u,x,t)$ and fluxes
$f=f(u,x,t)$ (see \cite{dri,dgv}) can be considered. 
\end{rem}

%------------
\begin{proof}[Proof of Theorem \ref{thm:a=1:self}]
  The existence of the solution $U$ to equation \rf{e1.1} with
  $\alpha=1$ supplemented with the initial condition \rf{ini:1} is
  provided by Theorem~\ref{thm:entropy}.  To obtain the self-similar
  form of $U$, we follow a standard argument based on the uniqueness
  result from Theorem~\ref{thm:entropy}. Observe that if $U$ is the
  solution to \rf{e1.1}, the rescaled function $
  U^\lambda(x,t)=U(\lambda x, \lambda t) $ is the solution for every
  $\lambda>0$, too.  Since, the initial datum \rf{ini:1} is invariant
  under the rescaling $ U^\lambda_0(x)=U_0(\lambda x)$, by the
  uniqueness, we obtain that for all~$\lambda>0$, $U(x,t)= U(\lambda
  x, \lambda t)$ for a.e. $(x,t)\in \R \times
  (0,+\infty)$. 
\end{proof}
%------------

%%%%%%%%%%%%%%%%%%%%%%%%%%%%%%%%%%%%%%%%%%%%%%%%%%%%%%%%%%%%%%%%%%%%%%%%%%%%%%%%%%

\section{Passage to the limit $\varepsilon\to 0$ and asymptotic study}
\label{sect:conv}

In this section, we prove Theorems~\ref{thm:as-stab-fb}, \ref{thm:a<1} and \ref{thm:a=1}.

\begin{proof}[Proof of Theorem~\ref{thm:as-stab-fb}]
  Denote by $u^\varepsilon$ and $\widetilde u^\varepsilon$ the
  solutions to the regularized equation \rf{ue1} with the initial
  conditions $u_0$ and $\widetilde u_0$.  By Theorem~\ref{thm:convg}
  and the maximum principle \eqref{est:inf}, we know that $\lim_{\varepsilon
    \rightarrow 0} \ue(t)=u(t)$ and $\lim_{\varepsilon \rightarrow 0}
  \widetilde\ue(t)=\widetilde u(t)$ in $L^p_{loc}(\R)$ for every $p
  \in [1,{+\infty})$ and in $L^\infty(\R)$ weak-$\ast$.  Hence, for each
  $R>0$ and $p \in [1,{+\infty}]$, using Theorem \ref{thm:as-stab} we
  have
\begin{equation*}
\begin{split}
\|u(t)-\widetilde u(t)\|_{L^p((-R,R))} 
&\leq \liminf_{\varepsilon \rightarrow 0} 
\|\ue(t)-\widetilde u^\varepsilon(t)\|_{L^p((-R,R))} \\
&\leq Ct^{-\frac1\alpha(1-\frac1p)}\|u_0-\widetilde u_0\|_1.
\end{split}
\end{equation*}
Since $R>0$ is arbitrary and the right-hand side of this inequality does 
not depend on $R$, we complete the proof of inequality \rf{ineq:stab}.
\end{proof}
%-------------------------------------------------------------------------------
\begin{proof}[Proof of Theorem \ref{thm:a<1}]
  In view of Theorem \ref{thm:as-stab-fb}, it suffices to show the following inequality
\begin{equation*}
\|\widetilde u(t)-S_\alpha(t) u_0\|_p 
\leq C\|U_0\|_\infty |U_0|_{BV} t^{1-\frac1\alpha(1-\frac1p)},
\end{equation*}
where $\widetilde u$ is the solution to \eqref{e1.1} with $U_0$ as the initial condition.
Notice that $\|U_0\|_\infty= u_+-u_-$ and $|U_0|_{BV}=\max\{|u_+|, |u_-|\}$ in this case.

Here, we argue exactly as in the proof of Theorem
\ref{thm:as-stab-fb}, since we can assume that $\lim_{\varepsilon
\rightarrow 0} \widetilde u^\varepsilon(t)=\widetilde u(t)$ in
$L^p_{loc}(\R)$ for every $p \in [1,{+\infty})$ and in $L^\infty(\R)$
weak-$\ast$.  Moreover, it is well-known that for fixed $t>0$
$$
\lim_{\varepsilon \to 0} S_\alpha^\varepsilon(t) U_0 = 
\lim_{\varepsilon \to 0} p_2(\varepsilon t)*\big(p_\alpha(t)* U_0\big)= 
S_\alpha(t) U_0 \quad \mbox{in} \quad L^p(\R) 
$$ 
for all $p \in [1,{+\infty}]$.
Hence, for every  $R>0$ and $p \in [1,{+\infty}]$, by Theorem  \ref{lem:lin:as},
we obtain
\begin{equation*}
\begin{split}
  \|\widetilde u(t)-S_\alpha(t) U_0\|_{L^p((-R,R))}
  &\leq \liminf_{\varepsilon \rightarrow 0} \|\widetilde u^\varepsilon(t)-S_\alpha^\varepsilon(t) U_0\|_{L^p((-R,R))} \\
  &\leq C\|U_0\|_\infty|U_0|_{BV} t^{1-\frac1\alpha(1-\frac1p)}.
\end{split}
\end{equation*}
The proof is completed by letting $R\to{+\infty}$.
\end{proof}
%---------------------------------------------------------------------------------

\begin{proof}[Proof of Theorem \ref{thm:a=1}]
Apply Theorem~\ref{thm:as-stab-fb} with $\alpha=1$ and 
$\widetilde u_0=U_0$.
\end{proof}

%%%%%%%%%%%%%%%%%%%%%%%%%%%%%%%%%%%%%%%%%%%%%%%%%%%%%%%%%%%%%%%%%%%%%%%%%%%%%%%%%%%%%%%%%%%%%%%%%

\section{Qualitative study of the self-similar profile for~$\alpha=1$}\label{sec:qualitative}

This section is devoted to the proof of Theorems~\ref{thm:qualitative} and \ref{thm:comp:cauchy}. 

\subsection{Proof of properties~p1--p4 from Theorem~\ref{thm:qualitative}}

  The Lipschitz-continuity sta\-ted in p1 is an immediate consequence
  of Proposition~\ref{th:ux} and
  Theorem~\ref{thm:convg}. Indeed,~$U(1)$ is the limit
  in~$L^1_{loc}(\R)$ of~$u^\varepsilon(1)$ as~$\varepsilon \rightarrow
  0$, where~$u^\varepsilon$ is solution
  to~\eqref{ue1}--\eqref{ue2} with~$u_0=U_0$ defined
  in~\eqref{ini:1}. Moreover, by~\eqref{ux dec}, the
  family~$\{u^\varepsilon(1):\varepsilon>0\}$ is
  equi-Lipschitz-continuous, which implies that the limit~$U(1)$ is
  Lipschitz-continuous.

  Before proving properties p2--p4, let us reduce the problem to a
  simpler one. We remark that equation~\eqref{e1.1} is invariant
  under the transformation
 \begin{equation}\label{transformation}
   V(x,t)\equiv U\left(x+\overline{c} t,t\right)-\overline{c} 
   \quad \mbox{where} \quad \overline{c}\equiv \frac{u_-+u_+}{2};
 \end{equation}
 that is to say, if~$U$ is a solution to~\eqref{e1.1}
 with~$U(x,0)=U_0(x)$ defined in~\eqref{ini:1}, then $V$ is a solution
 to~\eqref{e1.1} with the initial datum
 \begin{equation}\label{ini:2}
V(x,0)=V_0(x)\equiv \left\{
\begin{aligned}
v_+,\quad x<0,\\
v_-,\quad x>0,
\end{aligned}\right.
\end{equation}
where~$v_-=-v_+$ and~$v_+\equiv |\overline{c}| \geq 0$. It is clear
that~$U$ satisfies p2--p4, whenever~$V$ enjoys these properties.  In
the sequel, we thus assume without loss of generality that~$u_-=-u_+$
and~$u_+>0$.

It has been shown in~\cite[Lemma 3.1]{adv07} that if~$u_0 \in
L^\infty(\R)$ is non-increasing, odd and convex on~$(0,+\infty)$, then
the solution $u$ of~\eqref{e1.1}-\eqref{e1.2} shares these
properties w.r.t.~$x$, for all $t>0$. The proof is based on a
splitting method and on the fact that the ``odd, concave/convex''
property is conserved by both the hyperbolic equation~$u_t+ u u_x$ and
the fractal equation~$u_t+\Lambda^1 u=0$. The same proof works with
minor modifications to show that if~$u_0$ is non-decreasing, odd and
convex on~$(-\infty,0)$, then these properties are preserved by
problem~\eqref{e1.1}--\eqref{e1.2}. Details are left
to the reader since in that case, no shock can be created by the
Burgers part and the proof is even easier. By the hypothesis
$u_-=-u_+<0$ made above, the initial datum in~\eqref{ini:1} is
non-decreasing, odd and convex on~$(-\infty,0)$. We conclude that so
is the profile~$U(1)$. The proof of properties
p3--p4 is now complete.

What is left to prove is the limit in property p2. By
Theorem~\ref{thm:entropy}, we have~$U(t) \rightarrow U_0$
in~$L^1_{loc}(\R)$ as~$t \rightarrow 0$. In particular, the
convergence holds true a.e.~along a subsequence~$t_n \rightarrow 0$
as~$n \rightarrow {+\infty}$ and there exists~$\pm x_\pm >0$ such
that~$U(x_\pm,t_n) \rightarrow u_\pm$. By the self-similarity of~$U$,
we get~$U\left(\frac{x_\pm}{t_n},1\right) \rightarrow u_\pm $ as~$n \rightarrow
{+\infty}$. Since~$\frac{x_\pm}{t_n} \rightarrow \pm \infty$ and~$U(1)$ is
non-decreasing, we deduce property p2.

\subsection{Some technical lemmata} The last property of
Theorem~\ref{thm:qualitative} is the most difficult part to prove. In
this preparatory subsection, we state and prove technical results that
shall be needed in our reasoning.
%------------------------------------------------------------------
\begin{lemma}\label{lem:equivalence}
  Let~$v \in L^\infty(\R)$ be non-negative, even and non-increasing
  on~$(0,+\infty)$. Assume that there exists~$\ell > 0$ such that for
  all~$x_0 > 1/2$,
\begin{equation}\label{eq:tech:decay}
\lim_{n \rightarrow {+\infty}} n^{-1} \int_{n (x_0-1/2)}^{n (x_0 +1/2)} y^2 v(y) dy=\ell.
\end{equation} 
Then, we have~$y^2 v(y) \to_{|y| \rightarrow {+\infty}} \ell$.
\end{lemma}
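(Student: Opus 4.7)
My plan is to upgrade the averaged hypothesis to a pointwise statement in two steps, each leveraging the monotonicity of $v$ on $(0,+\infty)$; evenness of $v$ then gives the conclusion on the negative side for free.

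First, I would extract pointwise information at the endpoints $n(x_0 \pm \tfrac12)$. Since $v$ is non-increasing on $(0,+\infty)$, for every $x_0 > 1/2$ the sandwiching
\[
v\bigl(n(x_0+\tfrac12)\bigr) \le v(y) \le v\bigl(n(x_0-\tfrac12)\bigr) \quad \text{for } y \in \bigl[n(x_0-\tfrac12),\, n(x_0+\tfrac12)\bigr],
\]
combined with the elementary computation $\int_{n(x_0-1/2)}^{n(x_0+1/2)} y^2\,dy = n^3\bigl(x_0^2 + \tfrac{1}{12}\bigr)$, yields after dividing by $n$ and passing to the limit
\[
\limsup_{n\to\infty} n^2 v\bigl(n(x_0+\tfrac12)\bigr) \le \frac{\ell}{x_0^2+\tfrac{1}{12}} \le \liminf_{n\to\infty} n^2 v\bigl(n(x_0-\tfrac12)\bigr).
\]
Replacing $x_0$ by $x_0 - 1$ in the first bound and by $x_0 + 1$ in the second (both legal once $x_0 > 3/2$) produces the complementary one-sided bounds
\[
\liminf_n n^2 v\bigl(n(x_0+\tfrac12)\bigr) \ge \frac{\ell}{(x_0+1)^2+\tfrac{1}{12}}, \qquad \limsup_n n^2 v\bigl(n(x_0-\tfrac12)\bigr) \le \frac{\ell}{(x_0-1)^2+\tfrac{1}{12}}.
\]

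Second, I would transfer this discrete control to arbitrary large $y$ by using monotonicity a second time. For $x_0 > 3/2$ fixed and $y$ large enough, the overlap condition $(n+1)(x_0-1/2) \le n(x_0+1/2)$, i.e.\ $n \ge x_0 - 1/2$, guarantees the existence of an integer $n = n(y) \to +\infty$ with $y \in [n(x_0-1/2), n(x_0+1/2)]$. Monotonicity then gives
\[
\bigl(n(x_0-\tfrac12)\bigr)^2\, v\bigl(n(x_0+\tfrac12)\bigr) \le y^2 v(y) \le \bigl(n(x_0+\tfrac12)\bigr)^2\, v\bigl(n(x_0-\tfrac12)\bigr),
\]
and combining with the bounds from the previous step yields
\[
\frac{(x_0-1/2)^2}{(x_0+1)^2+\tfrac{1}{12}}\,\ell \le \liminf_{y\to+\infty} y^2 v(y) \le \limsup_{y\to+\infty} y^2 v(y) \le \frac{(x_0+1/2)^2}{(x_0-1)^2+\tfrac{1}{12}}\,\ell.
\]
Letting $x_0 \to +\infty$, both bracketing ratios tend to $1$, forcing $\lim_{y\to+\infty} y^2 v(y) = \ell$; evenness of $v$ then gives the same limit as $y \to -\infty$.

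The main difficulty is the mismatch between the integral-averaged hypothesis and the desired pointwise conclusion. Monotonicity bridges this gap in two passes: once to convert the averaged limit into one-sided bounds at the discrete endpoints $n(x_0 \pm \tfrac12)$, and once to propagate those bounds to all large $y$ trapped in the corresponding intervals. The small translation trick $x_0 \mapsto x_0 \pm 1$ is what turns each one-sided bound into a matching two-sided one, and the fact that the resulting translation cost vanishes in the limit $x_0 \to +\infty$ is what finally closes the argument.
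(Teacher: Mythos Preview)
Your proof is correct and follows essentially the same two-stage strategy as the paper: first use monotonicity of $v$ to convert the averaged limit into one-sided pointwise bounds at the endpoints $n(x_0\pm\tfrac12)$, then use monotonicity again to trap $y^2v(y)$ for general large $y$, and finally let $x_0\to\infty$. The only cosmetic difference is that the paper bounds $y^2$ crudely by the endpoint values (getting ratios like $\bigl(\tfrac{x_0+1/2}{x_0-1/2}\bigr)^2$) and in the second step traps $y$ in intervals $[n(x_0+\tfrac12),(n+1)(x_0+\tfrac12)]$, which makes your translation $x_0\mapsto x_0\pm1$ unnecessary; your exact computation of $\int y^2\,dy$ and choice of intervals $[n(x_0-\tfrac12),n(x_0+\tfrac12)]$ is equally valid but forces that small extra step.
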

%-----------------------------------------------------------------
\begin{proof}
   For all~$x_0 > 1/2$, we have
$$
n^{-1} \int_{n (x_0-1/2)}^{n (x_0 +1/2)} y^2 v(y) dy \geq n^2 (x_0-1/2)^2 v(n (x_0 +1/2)),
$$
thanks to the fact that~$v$ is non-increasing on~$(0,+\infty)$. Hence,
we have
\begin{multline*}
  n^2 (x_0 +1/2)^2 v(n (x_0 +1/2)) \leq \frac{n^2 (x_0 +1/2)^2}{n^2
    (x_0-1/2)^2} \; n^{-1} \int_{n (x_0-1/2)}^{n (x_0 +1/2)} y^2 v(y)
  dy.
\end{multline*}
Taking the upper semi-limit, we get for all~$x_0 > 1/2$
\begin{equation}\label{eq:tech:decay1}
  \limsup_{n \rightarrow {+\infty}} n^2 (x_0 +1/2)^2 v(n (x_0 +1/2)) 
\leq \ell \left(\frac{x_0 +1/2}{x_0-1/2} \right)^2,
\end{equation}
thanks to~\eqref{eq:tech:decay}. In the same way, one can show that
for all~$x_0> 1/2$,
\begin{equation}\label{eq:tech:decay2}
  \ell \left(\frac{x_0 -1/2}{x_0+1/2} \right)^2 
\leq \liminf_{n \rightarrow {+\infty}} n^2 (x_0-1/2)^2 v(n (x_0 -1/2)).
\end{equation}
Moreover, for fixed~$x_0>1/2$ and all~$y \geq x_0+1/2$, there exists
an unique integer~$n_y$ such that
$$
n_y (x_0 +1/2) \leq y < (n_y+1) (x_0 +1/2).
$$ 
Using again that~$v$ is non-increasing on~$[0,+\infty)$, we infer that
\begin{eqnarray*}
  y^2 v(y) & \leq & (n_y+1)^2 (x_0 +1/2)^2 \, v(n_y (x_0 +1/2)),\\
  & = & \frac{(n_y+1)^2 (x_0 +1/2)^2}{n_y^2 (x_0 +1/2)^2}\; n_y^2 (x_0 +1/2)^2 \, v(n_y (x_0 +1/2)).
\end{eqnarray*}
Notice that~$n_y \rightarrow {+\infty}$ as~$y \rightarrow
+\infty$. Therefore, passing to the upper semi-limit as~$y \rightarrow
+\infty$ in the inequality above, one can show that for all~$x_0>1/2$
$$
\limsup_{y \rightarrow +\infty} y^2 v(y) \leq \ell \left(\frac{x_0
    +1/2}{x_0-1/2} \right)^2,
$$
thanks to \eqref{eq:tech:decay1}. In the same way, we deduce
from~\eqref{eq:tech:decay2} that for all~$x_0>1/2$
$$
\ell \left(\frac{x_0 -1/2}{x_0+1/2} \right)^2 \leq \liminf_{y
  \rightarrow +\infty} y^2 v(y).
$$
Letting finally~$x_0 \rightarrow +\infty$ in both inequalities above
implies that
$$
\ell \leq \liminf_{y \rightarrow +\infty} y^2 v(y) \leq \limsup_{y
  \rightarrow +\infty} y^2 v(y) \leq \ell.
$$
Since~$v$ is even, we have completed  the proof of the lemma.
\end{proof}
For all $r>0$, the operator $\Lambda^1$ is the sum of $\Lambda_r^{(0)}$ and $\Lambda_r^{(1)}$. 
 As far as $\Lambda_r^{(1)}$ is concerned, we have the following lemma.
%-----------------------------------------------
\begin{lemma}\label{lem:esti:convexe:concave}
  Let~$u \in L^\infty(\R)$ be non-decreasing, odd and convex
  on~$(-\infty,0)$. Then, for the operator defined in \rf{LK1}, we
  have $\Lambda_r^{(1)} u \in L_{loc}^1(\R_\ast)$ together with the
  inequality
\begin{eqnarray}\label{esti:fl1:loc}
\int_{|x| >  R} |\Lambda_r^{(1)} u(x)|\,dx  \leq  \frac{4 G_1 r}{R-r} \|u\|_\infty 
\end{eqnarray}
 for all~$R>r>0$.
\end{lemma}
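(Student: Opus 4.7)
The plan is to establish the inequality first and deduce the local integrability as a byproduct. By the assumed oddness of $u$, a direct change of variables in the defining integral shows that $\Lambda_r^{(1)} u$ is itself odd, so it suffices to prove
\[
\int_R^{+\infty}|\Lambda_r^{(1)} u(x)|\,dx \leq \frac{2 G_1 r}{R-r}\|u\|_\infty.
\]
The hypotheses that $u$ is odd and convex on $(-\infty,0)$ together force $u$ to be concave on $(0,+\infty)$. Changing variable $z\to -z$ in one half of the defining integral, I would rewrite $\Lambda_r^{(1)} u$ in the symmetric form
\[
\Lambda_r^{(1)} u(x) = -G_1 \int_0^r \frac{u(x+z)+u(x-z)-2u(x)}{z^2}\,dz,
\]
in which no pointwise value of $u_x$ appears.

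For $x>R$ and $z\in(0,r)$, the three points $x, x\pm z$ lie in $(R-r,+\infty)\subset(0,+\infty)$ where $u$ is concave, so $u(x+z)+u(x-z)-2u(x)\leq 0$ and hence $\Lambda_r^{(1)} u(x)\geq 0$ there. Tonelli's theorem then applies without restriction. The computational core is to evaluate $\int_R^{+\infty}[2u(x)-u(x+z)-u(x-z)]\,dx$: truncating to $[R,M]$, shifting the integration variable in the translated terms, and using $u(y)\to u_+$ as $y\to+\infty$ to cancel the boundary contributions at $M$, this limit reduces to $\int_0^z[u(R+s)-u(R-s)]\,ds$. Swapping the order of integration on $\{0<s<z<r\}$ then produces the identity
\[
\int_R^{+\infty}\Lambda_r^{(1)} u(x)\,dx = G_1\int_0^r[u(R+s)-u(R-s)]\left(\frac{1}{s}-\frac{1}{r}\right)ds.
\]

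To bound the right-hand side, I exploit two consequences of concavity on $(0,+\infty)$. First, $u_x$ is non-increasing there, so for $s\in(0,r)$,
\[
u(R+s)-u(R-s)=\int_{R-s}^{R+s}u_x(y)\,dy\leq 2s\,u_x(R-r).
\]
Second, combining the same monotonicity with the $L^\infty$-bound on $u$ gives $u_x(y)\leq\frac{u(y)-u(0^+)}{y}\leq\frac{2\|u\|_\infty}{y}$ for $y>0$, whence $u_x(R-r)\leq\frac{2\|u\|_\infty}{R-r}$. Substituting these two inequalities and computing $\int_0^r 2s\cdot\frac{r-s}{rs}\,ds=r$ delivers $\int_R^{+\infty}\Lambda_r^{(1)} u\leq\frac{2G_1 r\|u\|_\infty}{R-r}$, which the symmetry between the two half-lines doubles to the announced bound.

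The main obstacle I anticipate is the careful evaluation of the inner $x$-integral: one must truncate to $[R,M]$, bookkeep the four boundary intervals arising from the three translated integrals (near $R,\,R\pm z$ and near $M,\,M\pm z$), and verify that the contributions at $M\to+\infty$ cancel pairwise thanks to $u(y)\to u_+$. Local integrability on $\R_\ast$ is then immediate for compact subsets of $\{|x|>r\}$ from the inequality just proved, while for compact subsets of $(0,r]$ and (by oddness) $[-r,0)$, the same concavity considerations together with the pointwise bound $u_x(y)\leq 2\|u\|_\infty/|y|$ away from the origin yield finiteness.
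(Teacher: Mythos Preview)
Your proof of the main inequality \eqref{esti:fl1:loc} is correct and takes a genuinely different route from the paper. The paper works through the second-order Taylor remainder: it first bounds $|u_x(x)|\le \|u\|_\infty/|x|$, then controls the Radon measure $u_{xx}$ via $\int_{|y|\ge |x|}|u_{yy}|(dy)\le 2\|u\|_\infty/|x|$, applies the Taylor formula of Lemma~\ref{lem:taylor}, and uses Fubini against $|u_{yy}|(dy)$. Your argument is more elementary and in a sense sharper: rewriting $\Lambda_r^{(1)}u$ in symmetric second-difference form removes $u_x$ from the picture entirely, the concavity of $u$ on $(0,+\infty)$ gives a definite sign so that Tonelli applies directly, and you obtain the exact identity
\[
\int_R^{+\infty}\Lambda_r^{(1)}u(x)\,dx \;=\; G_1\int_0^r\bigl[u(R+s)-u(R-s)\bigr]\Bigl(\tfrac1s-\tfrac1r\Bigr)\,ds,
\]
which is a cleaner endpoint than the paper's Fubini estimate. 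The cancellation at $M\to+\infty$ that worried you is genuine and routine: after the three shifts the boundary terms pair off as $\int_{M-z}^{M}u-\int_{M}^{M+z}u\to zu_+-zu_+=0$. Both approaches yield the same constant $4G_1r/(R-r)$; in fact your identity, together with $u(0^+)\ge 0$ (which follows from oddness and monotonicity), would give the constant $2G_1r/(R-r)$ if you used the sharper bound $u_x^+(y)\le \|u\|_\infty/y$ rather than $2\|u\|_\infty/y$.

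The one place your proposal is incomplete is the last sentence on $L^1_{loc}(\R_\ast)$. Your inequality only covers $\{|x|>R\}$ with $R>r$, so it yields $\Lambda_r^{(1)}u\in L^1_{loc}(\R\setminus[-r,r])$ directly, but for compact subsets of $(0,r]$ the points $x-z$ may cross the origin and ``the same concavity considerations'' do not obviously control the integrand. The clean fix is the paper's Step~4: for any $0<\widetilde r<r$ one has (from the symmetric form, or from \eqref{lem:esti:7}) $\Lambda_r^{(1)}u-\Lambda_{\widetilde r}^{(1)}u=\Lambda_{\widetilde r}^{(0)}u-\Lambda_r^{(0)}u\in L^\infty(\R)$, while your own estimate with $\widetilde r$ in place of $r$ gives $\Lambda_{\widetilde r}^{(1)}u\in L^1(\{|x|>R'\})$ for any $R'>\widetilde r$. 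Taking $\widetilde r$ small enough handles any compact subset of $\R_\ast$.
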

%------------------------------------------------------------------------
\begin{proof} The proof is divided into a sequence of steps.

\medskip

{\it Step 1: estimates of~$u_x$.}  The convex function~$u$
on~$(-\infty,0)$ is locally Lipschitz-continuous on~$(-\infty,0)$
and~{\it a fortiori} a.e. differentiable. Since $u(0)=0$, we have
for $x <0$
\begin{equation}\label{esti:lip:loc}
|u_x(x)| \leq \|u\|_\infty \, |x|^{-1};
\end{equation}
Remark that this estimate holds true for $x \in \R$ since~$u$ is odd.

\medskip

{\it Step 2: estimates of~$u_{xx}$.} By convexity of~$u$,~$u_{xx}$ is
a non-negative Radon measure on~$(-\infty,0)$ in the distribution
sense. Hence,~$u_x \in BV_{loc}((-\infty,0))$ satisfies $
%\label{lem:esti:1}
\int_{(\widetilde{x},x]} u_{yy}(dy)= u_x(x)-u_x(\widetilde{x}), $ for
a.e.~$\widetilde{x}<x<0$. Using~\eqref{esti:lip:loc} and
letting~$\widetilde{x} \rightarrow -\infty$, we conclude that for
a.e.~$x<0$
\begin{equation}\label{lem:esti:3}
\int_{(-\infty,x]} u_{yy}(dy)= u_x(x),
\end{equation}
thanks to the sup-continuity of non-negative measures. Again
by~\eqref{esti:lip:loc} and oddity of~$u_{xx}$, this shows for a.e.~$x
\neq 0$
\begin{equation}\label{esti:bv}
\int_{|y| \geq |x|} |u_{yy}|(dy)\leq 2\|u\|_\infty |x|^{-1};
\end{equation}
notice that by the inf-continuity of non-negative measures, this
inequality holds for all~$x \neq 0$.

\medskip

{\it Step 3: estimate of~$\Lambda_r^{(1)} u$.} Let us prove
that~$\Lambda_r^{(1)} u$ is well-defined by formula \eqref{LK1} for
a.e.~$x \neq 0$. By the preceding steps, we know that~$u \in
L^{\infty}(\R) \cap W_{loc}^{1,\infty}(\R_\ast)$ and~$u_x \in
BV_{loc}(\R_\ast)$. By Taylor's formula (see Lemma~\ref{lem:taylor} in
Appendix~\ref{appendix}), we infer that for all~$R>r>0$
\begin{align*}
  I & \equiv \int_{|x| > R} \int_{|z| \leq r} \frac{|u(x+z)-u(x)-u_x(x) z|}{|z|^{2}} \;  dx \; dz\\
  & \leq \int_{|x| > R} \int_{|z| \leq r} |z|^{-2}
  \left|\int_{I_{x,z}} |x+z-y| u_{yy}(dy) \right| \; dx \; dz,
\end{align*}
where~$I_{x,z}\equiv (x,x+z)$ if~$z>0$ and~$I_{x,z}\equiv (x+z,x)$ in
the opposite case.  Therefore, we see that
\begin{align*}
  I
  & \leq   \int_{|x| > R} \int_{|z| \leq  r} |z|^{-1} \int_{I_{x,z}} |u_{yy}|(dy)  \;  dx \; dz\\
  & = \int_{\R_\ast} \int_{\R} |z|^{-1} \mathbf{1}_{\{|z| \leq r\}}
  \int_{|x|>R} \mathbf{1}_{I_{x,z}}(y) \; dx \; |u_{yy}|(dy) \; dz,
\end{align*}
by integrating first w.r.t~$x$; notice that all the integrands are
measurable by Fubini's theorem, since the Radon measure~$|u_{yy}|
(dy)$ is $\sigma$-finite on~$\R_\ast$.  For fixed~$(y,z) \in \R_\ast
\times \R$, we have
\begin{eqnarray*}
  \mathbf{1}_{\{|z| \leq r\}} \int_{|x|>R} \mathbf{1}_{I_{x,z}}(y)  \;  dx 
\leq  |z| \; \mathbf{1}_{\{|z| \leq r\}} \; \mathbf{1}_{\{|y| \geq R-r\}},
\end{eqnarray*}
because the measure of the set~$\{x: y \in I_{x,z}\}$ can be estimated
by $|z|$, and if~$|z| \leq r$, then~$\mathbf{1}_{I_{x,z}}(y)=0$ for
all~$|x|>R$ whenever~$|y| <R-r$. It follows that
$$
I \leq \int_{\R_\ast} \int_{\R} \mathbf{1}_{\{|z| \leq r\}} \;
\mathbf{1}_{\{|y| \geq R-r\}} \; |u_{yy}|(dy) \; dz = 2r \int_{|y|
  \geq R-r} |u_{yy}|(dy).
$$
Recalling the definition of~$I$ above and estimate~\eqref{esti:bv}, we
have shown that
\begin{equation}\label{fubini}
  \int_{|x| > R} \int_{|z| \leq r} \frac{|u(x+z)-u(x)-u_x(x) z|}{|z|^{2}} \;  dx \; dz 
  \leq 4 r \|u\|_\infty (R-r)^{-1}.
\end{equation}
Fubini's theorem then implies that~$\Lambda_r^{(1)} u (x)$ is
well-defined by~\eqref{LK1} for a.e.~$|x| > R>r$ by satisfying the
desired estimate~\eqref{esti:fl1:loc}.

\medskip

{\it Step 4: local integrability on~$\R_\ast$.}
Estimate~\eqref{esti:fl1:loc} implies that~$\Lambda_r^{(1)} u \in
L^1_{loc}(\R \setminus [-r,r])$. In fact,~$\Lambda_r^{(1)} u$ is
locally integrable on all~$\R_\ast$. Indeed, simple computations show
that for all~$r>\widetilde{r}>0$
\begin{equation}\label{lem:esti:7}
  \Lambda_r^{(1)} u+\Lambda_r^{(0)} u=\Lambda_{\widetilde{r}}^{(1)} u+\Lambda_{\widetilde{r}}^{(0)} u,
\end{equation}
since their difference evaluated at some~$x$ is equal to $
\int_{\widetilde{r}\leq |z| \leq r} \frac{-u_x(x) z}{|z|^2}, $ which
is null by oddity of the function~$z \rightarrow -u_x(x) z$. By Step~3, 
it follows that~$\Lambda_r^{(1)} u=\Lambda_{\widetilde{r}}^{(1)}
u+\Lambda_{\widetilde{r}}^{(0)} u-\Lambda_r^{(0)} u \in L^1_{loc}(\R
\setminus [-\widetilde{r},\widetilde{r}])$, which completes the proof.
\end{proof}
%-------------
It is clear that $\Lambda_r^{(0)}$ maps $L^\infty(\R)$ into $L^\infty (\R)$ and if
 $\{u_n\}_{n\in\mathbb{N}}$ is uniformly
  essentially bounded and $u_n \to u$ in $L^1_{loc}(\R)$, then
  $\Lambda_r^{(0)} u_n \to\Lambda_r^{(0)} u$ in $L^1_{loc}(\R)$ as $n
  \to + \infty$.
%-------------------------------------------------------------------
\begin{rem}\label{fl:distribution} 
    Lemma~\ref{lem:esti:convexe:concave} implies that~$\Lambda^1 u \in
    L^1_{loc}(\R_\ast)$ whenever $u \in L^\infty(\R)$ is
    non-decreasing, odd and convex on~$(-\infty,0)$. This sum does not
    depend on~$r>0$ by~\eqref{lem:esti:7}. Moreover, one sees
    from~\eqref{fubini}, Fubini's theorem and~\eqref{eqn:LK}, that for
    all~$\varphi \in \mathcal{D}(\R_\ast)$, $ \int_\R \varphi
    \Lambda^1 u \; dx =\int_\R u \Lambda^1 \varphi \; dx.  $ This
    means that this sum corresponds to the distribution fractional
    Laplacian of~$u$ on~$\R_\ast$.
\end{rem}
%---------
We deduce from the previous lemma the following one
%---------------------------------------------------------------------
\begin{lemma}\label{correction-point3-1}
  Let~$u \in C_b(\R)$ be non-decreasing, odd and convex
  on~$(-\infty,0)$. Then, the function $\Lambda^1 u \in L^1_{loc}(\R_\ast)$
  satisfies for all~$x_0>1/2$,
    $$
    \lim_{n \rightarrow +\infty} n^{-1} \int_{n(x_0-1/2)}^{n(x_0+1/2)} |\Lambda^1 u(y)| dy=0.
    $$
\end{lemma}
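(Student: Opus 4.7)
The plan is to decompose $\Lambda^1 u = \Lambda_r^{(1)} u + \Lambda_r^{(0)} u$ for some fixed $r>0$ (say $r=1$) and to estimate separately the averages of the two pieces over the interval $I_n \equiv [n(x_0-1/2),n(x_0+1/2)]$. The local integrability of $\Lambda^1 u$ on $\R_\ast$ was already established in Lemma~\ref{lem:esti:convexe:concave} together with Remark~\ref{fl:distribution}, so only the limit statement needs to be proved.

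For the principal value piece $\Lambda_r^{(1)} u$, the key point is that the assumption $x_0>1/2$ guarantees that $I_n \subset \{|y|>n(x_0-1/2)\}$ for every $n\geq 1$. Applying the estimate \eqref{esti:fl1:loc} from Lemma~\ref{lem:esti:convexe:concave} with $R=n(x_0-1/2)$ and $r$ fixed yields
$$
n^{-1}\int_{I_n} |\Lambda_r^{(1)} u(y)|\,dy \leq \frac{4G_1 r \|u\|_\infty}{n\,[n(x_0-1/2)-r]},
$$
which is $O(1/n^2)$ and therefore vanishes as $n\to+\infty$.

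For the ``large jumps'' piece $\Lambda_r^{(0)} u$, the main observation is that $|\Lambda_r^{(0)} u(x)|\to 0$ as $|x|\to+\infty$. Indeed, since $u\in C_b(\R)$ is non-decreasing, it admits limits $u(\pm\infty)\in\R$ at $\pm\infty$; for each fixed $z$ with $|z|>r$ the integrand $[u(x+z)-u(x)]/|z|^2$ thus converges pointwise to $0$ as $|x|\to+\infty$ and is dominated by the integrable function $2\|u\|_\infty |z|^{-2}\mathbf{1}_{|z|>r}$, so Lebesgue's dominated convergence theorem yields the claim from the explicit formula \eqref{LK2}. Given $\varepsilon>0$, one can then choose $M$ so that $|\Lambda_r^{(0)} u(y)|\leq \varepsilon$ for $|y|>M$, and for $n$ large enough $I_n\subset\{|y|>M\}$, whence
$$
n^{-1}\int_{I_n}|\Lambda_r^{(0)} u(y)|\,dy \leq \varepsilon.
$$
Adding the two bounds through the triangle inequality $|\Lambda^1 u|\leq|\Lambda_r^{(1)} u|+|\Lambda_r^{(0)} u|$ completes the argument.

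The only delicate step is justifying the decay at infinity of $\Lambda_r^{(0)} u$: it crucially uses that $u$ is a bounded \emph{monotone} function on $\R$ (so admits limits at $\pm\infty$), together with the integrability of $|z|^{-2}$ on $\{|z|>r\}$ which is specific to the critical case $\alpha=1$. The principal value part, by contrast, is handled entirely by the quantitative tail estimate already provided by Lemma~\ref{lem:esti:convexe:concave}.
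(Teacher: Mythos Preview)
Your proof is correct and follows essentially the same approach as the paper: decompose $\Lambda^1 u=\Lambda_r^{(1)}u+\Lambda_r^{(0)}u$, control the first piece via the tail estimate~\eqref{esti:fl1:loc} with $R=n(x_0-1/2)$, and control the second by showing $\Lambda_r^{(0)}u(y)\to 0$ as $y\to+\infty$ via dominated convergence using that $u$ is bounded monotone (hence has limits at infinity). The only cosmetic difference is that the paper bounds the average of $|\Lambda_r^{(0)}u|$ over $I_n$ by its supremum, attained at some $y_n\to+\infty$ by continuity, whereas you phrase it as a global $\varepsilon$--$M$ argument; the underlying DCT step is identical.
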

%-------------------------------------
\begin{proof}
  By Remark~\ref{fl:distribution}, one has~$\Lambda^1 u \in
  L^1_{loc}(\R_\ast)$. Let~$r>0$ be fixed. One has
\begin{align*}
& I_n \equiv n^{-1} \int_{n(x_0-1/2)}^{n(x_0+1/2)} |\Lambda^1 u(y)| dy \\ 
&\leq n^{-1} \int_{n(x_0-1/2)}^{n(x_0+1/2)} |\Lambda_r^{(1)} u(y)| dy
+n^{-1} \int_{n(x_0-1/2)}^{n(x_0+1/2)} |\Lambda_r^{(0)} u(y)| dy\\
& \leq \frac{4 G_1 r}{n^2(x_0-1/2)-nr} \, \|u\|_\infty
+\sup \left\{|\Lambda_r^{(0)}(y)|:n(x_0-1/2)<y<n(x_0+1/2) \right\} ,
\end{align*}
thanks to~\eqref{esti:fl1:loc}. Moreover, $\Lambda_r^{(0)} u$ is
continuous, hence the supremum above is achieved at some~$y_n \geq
n(x_0-1/2)$; hence, one has
\begin{equation*}
  I_n 
  \leq \frac{4 G_1 r}{n^2(x_0-1/2)-nr} \, \|u\|_\infty+G_1 \int_{|z|>r} \frac{|u(y_n+z)-u(y_n)|}{|z|^2} \, dz,
\end{equation*}
where~$\lim_{n \rightarrow +\infty} y_n=+\infty$. Since~$u$ is
non-decreasing and bounded, it has a limit at infinity; the dominated
convergence theorem then implies that the integral term above tends to
zero as~$n \rightarrow +\infty$. It follows that $ \lim_{n
  \rightarrow +\infty} \, I_n =0.  $
\end{proof}

\subsection{Proof of property~p5 from Theorem~\ref{thm:qualitative}}
We assume
again without loss of generality that~$u_-=-u_+<0$, thanks to the
transformation~\eqref{transformation}; hence,~$U_0 \in L^\infty(\R)$
is non-decreasing, odd and convex on~$(-\infty,0)$ and so is~$U(t)$
for all~$t>0$ by properties p2--p4 of
Theorem~\ref{thm:qualitative}. We proceed again in several steps.
\medskip

{\it Step 1: study of $\Lambda^1 U$.} 
%------------------------------------
Before deriving the equation satisfied by $U(1)$, we study 
$\Lambda^1 U$. 
%-----------
\begin{lemma}\label{cor:convergence:fl}
Let~$\alpha=1$ and~$U$ be the self-similar solution from
Theorem~\ref{thm:a=1:self} with initial datum~$U_0$ in~\eqref{ini:1}
for some~$u_-=-u_+<0$. Then, for all $t \geq 0$, one has~$\Lambda^1
U(t) \in L^1_{loc}(\R_\ast)$. Moreover,~$ \Lambda^1 U(t)$ converges
toward~$\Lambda^1 U_0$ in~$L^1_{loc}(\R_\ast)$ as~$t \rightarrow 0$,
where for all~$x \neq 0$
$$
\Lambda^1 U_0(x)= \frac{u_+-u_-}{2 \pi^2} \, x^{-1}.
$$ 
\end{lemma}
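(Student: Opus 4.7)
The plan is to prove the three assertions --- the explicit formula for $\Lambda^1 U_0$, the local integrability $\Lambda^1 U(t)\in L^1_{loc}(\R_\ast)$, and the convergence $\Lambda^1 U(t)\to \Lambda^1 U_0$ in $L^1_{loc}(\R_\ast)$ --- by exploiting the L\'evy-Khintchine splitting~\rf{eqn:LK}--\rf{LK2} together with the qualitative properties of $U(t)$ recorded in Theorem~\ref{thm:qualitative} (properties p2--p4, already established in this section) and the $L^1_{loc}$-continuity of the entropy solution at $t=0$ provided by Theorem~\ref{thm:entropy}.

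First I would compute $\Lambda^1 U_0$ directly. For $x>0$ and $0<r<x$, the function $U_0$ is constant on $[x-r,x+r]$, so $\Lambda_r^{(1)} U_0(x)=0$ (and $U_0$ is differentiable at $x$ with $U_0'(x)=0$), whereas only the zone $z<-x$ contributes to $\Lambda_r^{(0)}U_0(x)$, giving
$$
\Lambda_r^{(0)} U_0(x)= - G_1 \int_{-\infty}^{-x} \frac{u_--u_+}{z^2}\,dz = \frac{G_1 (u_+-u_-)}{x}.
$$
Using the value $G_1=\frac{\Gamma(1)}{2\pi^{3/2}\Gamma(1/2)}=\frac{1}{2\pi^2}$ and the oddity of $U_0$ for $x<0$, one obtains the announced identity and simultaneously establishes $\Lambda^1 U_0\in L^1_{loc}(\R_\ast)$. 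The local integrability $\Lambda^1 U(t)\in L^1_{loc}(\R_\ast)$ for every $t>0$ then follows from Remark~\ref{fl:distribution} applied to $U(t)$, since properties~p2--p4 of Theorem~\ref{thm:qualitative} ensure that $U(t)$ is bounded, non-decreasing, odd and convex on~$(-\infty,0)$ --- the exact hypothesis of Lemma~\ref{lem:esti:convexe:concave}.

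For the $L^1_{loc}(\R_\ast)$-convergence, I would fix a compact $K\subset\R_\ast$, say $K\subset\{|x|>R\}$ for some $R>0$, and use the splitting $\Lambda^1=\Lambda_r^{(1)}+\Lambda_r^{(0)}$ for any $0<r<R$. Since $\Lambda_r^{(1)} U_0$ vanishes on $K$ by the computation above,
$$
\int_K |\Lambda^1 U(t)-\Lambda^1 U_0|\,dx \le \int_{|x|>R}|\Lambda_r^{(1)} U(t)|\,dx + \int_K|\Lambda_r^{(0)} U(t)-\Lambda_r^{(0)} U_0|\,dx.
$$
Estimate~\rf{esti:fl1:loc} applied to $U(t)$, together with the maximum principle $\|U(t)\|_\infty\le\max(|u_+|,|u_-|)$ from Theorem~\ref{thm:entropy}, bounds the first integral by $\tfrac{4G_1 r}{R-r}\max(|u_+|,|u_-|)$ \emph{uniformly in $t\ge 0$}, so this term is made arbitrarily small by choosing $r$ small. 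For the second integral, with $r$ now fixed, the continuity statement placed just before Remark~\ref{fl:distribution} applies: the family $\{U(t)\}_{t\ge 0}$ is uniformly bounded in $L^\infty$ and satisfies $U(t)\to U_0$ in $L^1_{loc}(\R)$ as $t\to 0$ (Theorem~\ref{thm:entropy}), hence $\Lambda_r^{(0)} U(t)\to \Lambda_r^{(0)} U_0$ in $L^1_{loc}(\R)$. A standard $\varepsilon$-argument --- first pick $r$ small, then let $t\to 0$ --- concludes.

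The only delicate point is the uniform-in-$t$ control of $\Lambda_r^{(1)} U(t)$, but this is already encoded in Lemma~\ref{lem:esti:convexe:concave}; its crucial input is that $U(t)$ stays in the class ``bounded, odd, convex on $(-\infty,0)$'' for every $t\ge 0$, which is precisely what properties~p2--p4 of Theorem~\ref{thm:qualitative} supply, together with the time-uniform $L^\infty$-bound coming from the maximum principle.
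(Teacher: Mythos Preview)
Your proof is correct and follows essentially the same route as the paper's: compute $\Lambda^1 U_0$ via the splitting $\Lambda^1=\Lambda_r^{(1)}+\Lambda_r^{(0)}$ with $0<r<|x|$ (so that $\Lambda_r^{(1)}U_0=0$), invoke Remark~\ref{fl:distribution} for $\Lambda^1 U(t)\in L^1_{loc}(\R_\ast)$, and for the convergence combine the $L^1_{loc}$-continuity of $\Lambda_r^{(0)}$ under bounded $L^1_{loc}$-convergence with the uniform-in-$t$ bound~\rf{esti:fl1:loc} on $\Lambda_r^{(1)}U(t)$, then let $r\to 0$. The only cosmetic difference is the order of presentation --- the paper first absorbs the $\Lambda_r^{(0)}$-part into the $\limsup$ and then estimates the remaining $\Lambda_r^{(1)}$-part, whereas you write both terms side by side before the $\varepsilon$-argument.
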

%---------------------------------------------------------------------
\begin{proof} By properties~p2--p4 of
  Theorem~\ref{thm:qualitative},~$U(t) \in L^\infty(\R)$ is
  non-decreasing, odd and convex on~$(-\infty,0)$ for all~$t \geq
  0$. By Remark~\ref{fl:distribution},~$\Lambda^1 U(t)$ and~$\Lambda^1
  U_0$ belong to~$L^1_{loc}(\R_\ast)$. By taking~$0<r<|x|$, simple
  computations show that
  \begin{equation}\label{lem:esti:6}
    \Lambda_r^{(1)} U_0(x)=0 \quad \mbox{and} \quad \Lambda_r^{(0)} U_0(x)=\frac{u_+-u_-}{2 \pi^2} \, x^{-1},
  \end{equation}
  so that
$$
\Lambda^1 U_0(x)=\frac{u_+-u_-}{2 \pi^2} \, x^{-1};
$$ here, we have used the equalities $\Gamma(1)=1$
and~$\Gamma(1/2)=\sqrt{\pi}$ in order to get $G_1=(2 \pi^2)^{-1}$
in~\eqref{LK1}--\eqref{LK2}. Moreover, Theorem~\ref{thm:entropy}
implies that~$U(t) \rightarrow U_0$ as~$t \rightarrow 0$
in~$L^1_{loc}(\R)$ with~$\|U(t)\|_\infty \leq \|U_0\|_\infty$. We
remark that for fixed~$r>0$,~$\Lambda_r^{(0)} U(t) \rightarrow
\Lambda_r^{(0)} U_0$ in~$L^1_{loc}(\R)$ as~$t \rightarrow 0$. It
follows that for all~$\widetilde{R}>R>r$,
  \begin{align*}
    & \limsup_{t \rightarrow 0} \int_{R<|x|<\widetilde{R}} |\Lambda^{1}U(t)-\Lambda^{1} U_0| \; dx\\
    & \leq \limsup_{t \rightarrow 0} \int_{R<|x|<\widetilde{R}} |\Lambda_r^{(1)} U(t)-\Lambda_r^{(1)} U_0| \; dx,\\
    & = \limsup_{t \rightarrow 0} \int_{R<|x|<\widetilde{R}}
    |\Lambda_r^{(1)} U(t)| \; dx
    \quad \mbox{by~\eqref{lem:esti:6}},\\
    & \leq \limsup_{t \rightarrow 0} 4 G_1 r \|U(t)\|_\infty
    (R-r)^{-1}
    \quad \mbox{by~\eqref{esti:fl1:loc} in Lemma~\ref{lem:esti:convexe:concave}},\\
    & \leq 4 G_1 r \|U_0\|_\infty (R-r)^{-1}.
  \end{align*}
  The proof is completed by letting~$r \rightarrow 0$.
\end{proof}

{\it Step 2: equation satisfied by~$U(1)$.} By using~$\eta(r) = \pm r$
in Definition~\ref{def:entropy}, we  obtain (in a classical way) 
that entropy
solutions to~\eqref{e1.1} are distribution solutions,~{\it i.e.}
\begin{equation}\label{eq:distribution}
U_t+U U_x+\Lambda^1 U=0 \quad \mbox{in} \quad \mathcal{D}'(\R \times (0,+\infty)).
\end{equation}
By property~p1 of Theorem~\ref{thm:qualitative}, one has $U(1) \in
W^{1,\infty}(\R)$. By the self-similarity $U(x,t)=U\left(\frac{x}{t},1\right)$, one has
at least~$U_t,U_x \in L^\infty_{loc}(\R \times (0,{+\infty}))$ together
with the following equalities for a.e.~$t>0$ and~$x \in \R$
\begin{equation*}
U_t(x,t)  =  -x t^{-2} U_x\Big(\frac{x}t,1\Big),\qquad
U_x(x,t)  =  t^{-1} U_x\Big(\frac{x}t,1\Big).
\end{equation*}
By Lemma~\ref{cor:convergence:fl}, we have also~$\Lambda^1 U(1)
\in L^1_{loc}(\R_\ast)$. Using again the self-similarity, it is easy
to deduce from~\eqref{eqn:LK} that~$\Lambda^1 U \in L^1_{loc}(\R_\ast
\times (0,{+\infty}))$ with for a.e.~$t>0$ and~$x \in \R_\ast$,
\begin{eqnarray*}
\Lambda^1 U(x,t) =  t^{-1} \Lambda^1 U\Big(\frac{x}t,1\Big)
\end{eqnarray*}
(in fact,~$\Lambda^1 U \in L^\infty_{loc}(\R \times (0,{+\infty}))$
by~\eqref{eq:distribution} so that~$\Lambda^1 U(1) \in
L^\infty_{loc}(\R)$). Putting these formulas
into~\eqref{eq:distribution}, we get for a.e.~$t>0$ and~$x \in \R$,
$$
-x t^{-2} U_x\Big(\frac{x}t,1\Big)
+t^{-1} U\Big(\frac{x}t,1\Big) U_x\Big(\frac{x}t,1\Big)
+t^{-1} \Lambda^1 U\Big(\frac{x}t,1\Big)=0.
$$
Multiplying by~$t$ and changing the variable by~$y=t^{-1} x$, one
infers  that the profile~$\U(y)\equiv U(y,1)$ satisfies for a.e~$y \in
\R$
\begin{equation}\label{eqn:self}
(\U(y)-y) \U_y(y)+\Lambda^1 \U(y)=0.
\end{equation} 

{\it Step 3: reduction of the problem.} By properties~p1--p4, the
function $\U_y \in L^\infty(\R)$ is non-negative, even and
non-decreasing on~$(-\infty,0)$. Then, Lemma~\ref{lem:equivalence} 
shows that the proof of~p5 can be reduced to the proof of the
following property:
\begin{equation}\label{decay2}
  \forall x_0>1/2 \quad \lim_{n \rightarrow {+\infty}} n^{-1}\int_{n (x_0-1/2)}^{n (x_0+1/2)} y^{2}\U_{y}(y) dy 
= \frac{u_+-u_-}{2 \pi^2}.
\end{equation}
Moreover, equality \eqref{eqn:self} implies
that~$\U_{y}(y)=\frac{\Lambda^1 \U(y)}{y -\U(y)}$ (for
a.e.~$y>\|\U\|_\infty$) and Lemma~\ref{correction-point3-1} implies that
$$
\lim_{n \rightarrow {+\infty}} n^{-1}\int_{n (x_0-1/2)}^{n (x_0+1/2)} |\Lambda^1 \U(y)| \, dy=0;
$$
hence, since~$\frac{y^2}{y-\mathcal{U}(y)}=y+\mathcal{O}(1)$ as~$|y| \rightarrow +\infty$, one 
deduces that~\eqref{decay2} is equivalent
to the following property:
\begin{equation}\label{decay3}
  \forall x_0>1/2 \quad 
  \lim_{n \rightarrow {+\infty}} n^{-1}\int_{n (x_0-1/2)}^{n (x_0+1/2)} y \Lambda^1 \U(y) \, dy=\frac{u_+-u_-}{2 \pi^2}.
\end{equation}

{\it Conclusion: proof of~\eqref{decay3}.} Let us change the variable by~$y=n x$. Easy computations show that
\begin{eqnarray*}
  n^{-1} \int_{n (x_0-1/2)}^{n (x_0+1/2)} y \Lambda^1 \U(y) \, dy & = &
  n^{-1} \int_{x_0-1/2}^{x_0+1/2} n x \Lambda^1 U \left(\frac{x}{n^{-1}},1\right) \, n dx,\nonumber \\
  & = & \int_{x_0-1/2}^{x_0+1/2} x \, \Lambda^1 U(x,n^{-1}) \, dx. \nonumber
\end{eqnarray*}  
Since lemma~\ref{cor:convergence:fl} implies that  $\{\Lambda^1 U(x,n^{-1})\}_{n \in \mathbb{N}}$ converges
 in~$L^1((x_0-1/2,x_0+1/2))$ toward~$\frac{u_+-u_-}{2 \pi^2}$ as~$n
\rightarrow {+\infty}$, the proofs of~\eqref{decay3} and thus of
property~p5 are complete.

\subsection{Duhamel's representation of the self-similar profile}

It remains to prove Theorem~\ref{thm:comp:cauchy}, for which we need
the following result.
%---------------------------------------------------------------------
\begin{prop}\label{prop:duhamel}
  Let~$\alpha=1$ and let~$U$ be the self-similar solution of
  Theorem~\ref{thm:a=1:self} with $u_\pm=\pm1/2$. Then, for all~$x \in
  \R$, we have
\begin{multline}\label{duh:self}
U(x,1)=-1/2+H_1(x,1)\\-\int_0^{1/2} \partial_x p_1(1-\tau) \ast  \frac{U^2(\cdot/\tau,1)}{2} \, (x)\; d\tau\\
-\int_{1/2}^1 \tau^{-1} \, p_1(1-\tau) \ast (U(\cdot/\tau,1)U_x(\cdot/\tau,1)) \, (x) \, d\tau
\end{multline}
(where~$H_1(x,1)=\int_{-\infty}^x p_1(y,1) dy$).
\end{prop}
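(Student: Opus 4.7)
The strategy is to derive the identity by working at the level of the viscous approximations $\ue$ (for which Duhamel's formula holds classically), passing to the limit $\varepsilon \to 0$ term by term, and then using the self-similarity $U(x,\tau) = U(x/\tau, 1)$ to recast the resulting convolutions in terms of the profile $U(\cdot, 1)$. Applying \eqref{duh:eps} at $t=1$ with $\alpha=1$ and initial datum $U_0 = -\tfrac{1}{2} + \mathbf{1}_{(0, +\infty)}$ gives
$$
\ue(x, 1) = S_1^\varepsilon(1) U_0(x) - \int_0^1 S_1^\varepsilon(1-\tau)\bigl(\ue(\tau) \ue_x(\tau)\bigr)(x)\, d\tau.
$$

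For the linear term, a direct computation using the explicit form of $U_0$ shows that $p_1(1) * U_0 = -\tfrac{1}{2} + H_1(\cdot, 1)$, and since $S_1^\varepsilon(1) U_0 = p_2(\varepsilon) * \bigl(p_1(1)*U_0\bigr)$, the heat factor converges to the identity in $L^1$ and one obtains $S_1^\varepsilon(1) U_0 \to -\tfrac{1}{2} + H_1(\cdot, 1)$ as $\varepsilon \to 0$.

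For the nonlinear contribution I would split the integral at $\tau = 1/2$. On $(0, 1/2)$ I would write $\ue \ue_x = \tfrac{1}{2}(\ue^2)_x$ and transfer the spatial derivative onto the kernel, so that the integrand becomes $\partial_x p_1(1-\tau) * p_2(\varepsilon(1-\tau)) * \tfrac{1}{2}\ue^2(\tau, \cdot)$. Its sup-norm in $x$ is controlled, uniformly in $\varepsilon$, by $\|\partial_x p_1(1-\tau)\|_1 \|\ue\|_\infty^2 \leq C(1-\tau)^{-1}$, which is integrable on $(0, 1/2)$. Theorem~\ref{thm:convg} and dominated convergence then yield, after invoking the self-similarity identity $U^2(\cdot, \tau) = U^2(\cdot/\tau, 1)$, the first integral term of \eqref{duh:self}. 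On $(1/2, 1)$, Proposition~\ref{th:ux} applies (because $U_0$ is non-decreasing and of bounded variation) and gives $\|\ue_x(\tau)\|_\infty \leq C/\tau$ uniformly in $\varepsilon$, so that $\ue \ue_x$ is uniformly bounded on $(1/2, 1)$; one then passes to the limit without integration by parts to obtain $p_1(1-\tau) * (UU_x)(\tau, \cdot)$, and the self-similarity relation $(UU_x)(\cdot, \tau) = \tau^{-1} U(\cdot/\tau, 1) U_x(\cdot/\tau, 1)$ produces the second integral in \eqref{duh:self}.

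The main obstacle lies in handling the nonlinear term near $\tau = 0$: the discontinuity of $U_0$ at the origin prevents any uniform bound on $\ue_x(\tau)$ as $\tau \to 0$, so the product $\ue \ue_x(\tau)$ cannot be controlled directly on a neighborhood of zero. The conservative rewriting combined with the transfer of the derivative onto the smooth kernel $p_1(1-\tau)$ bypasses this issue. The same trick cannot be used near $\tau = 1$ since $\|\partial_x p_1(1-\tau)\|_1$ blows up as $(1-\tau)^{-1}$, which explains why the split at $\tau = 1/2$ is natural and why one has to rely on the direct $L^\infty$-estimate of $\ue_x$ from Proposition~\ref{th:ux} on the upper half of the integration interval.
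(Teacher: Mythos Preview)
Your proposal is correct and follows essentially the same route as the paper: Duhamel at $t=1$ for the viscous approximation, split at $\tau=1/2$, move the $x$-derivative onto $p_1(1-\tau)$ on $(0,1/2)$, use the uniform gradient bound from Proposition~\ref{th:ux} on $(1/2,1)$, dominated convergence, and finally self-similarity. One small point to tighten: on $(1/2,1)$ you need the \emph{pointwise} limit of $p_1(1-\tau)\ast(\ue\ue_x)$, and Theorem~\ref{thm:convg} alone does not give convergence of $\ue_x$; the paper handles this by observing that, for each fixed $\tau<1$, $p_1(1-\tau)\ast(\ue\ue_x)=\partial_x p_1(1-\tau)\ast\tfrac{1}{2}(\ue)^2$, so the pointwise limit reduces to convergence of $(\ue)^2$ (the gradient bound is used only for the $\tau$-domination). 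The paper also upgrades $L^1_{loc}$ convergence of $\ue(\tau)$ to uniform convergence via Dini's theorem for distribution functions, which makes the passage to the limit in the convolutions cleaner.
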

%-----------------------------------------------------------------------
\begin{proof} 
  The proof proceeds in several steps.

  {\it Step 1: Duhamel's representation of the approximate solution.}
  Notice that formula~\eqref{duh:self} makes sense. Indeed, by the
  homogeneity property~\eqref{homogeneity}, we have for all~$t>0$
  \begin{equation}\label{est:grad:cauchy}
    \|\partial_x p_1(t)\|_1 = C_0 t^{-1},
  \end{equation}
  where $C_0\equiv \|\partial_x P_1(1)\|_1$ is finite
  by~\eqref{EP}. Hence, the integral~$\int^{1/2}_0 \dots d\tau$
  in~\eqref{duh:self} is well-defined since the integration
  variable~$\tau$ is far from the singularity at $\tau=1$. In the same
  way, since $U (1) \in W^{1,\infty} (\R)$, the
  integral~$\int_{1/2}^{1} \dots d\tau$ is also well-defined.

  Let now~$u^\varepsilon$ be the solution to the
  regularized equation~\eqref{ue1}, with initial datum~$U_0$
  in~\eqref{ini:1}. The goal is to pass to the limit in
  formula~\eqref{duh:eps} at time~$t=1$, namely
  \begin{multline}\label{duh:eps:bis}
    u^\varepsilon(x,1)=S_1^\varepsilon(1) U_0(x)\\
    -\int_0^{1/2} p_2(\varepsilon(1-\tau)) \ast \partial_x p_1(1-\tau)
    \ast
    \frac{(u^\varepsilon(\tau))^2}{2} \, (x)\; d\tau\\
    -\int_{1/2}^1 p_2(\varepsilon(1-\tau)) \ast p_1(1-\tau) \ast
    (u^\varepsilon(\tau)u^\varepsilon_x(\tau)) \, (x) \, d\tau,
  \end{multline}
  for all~$x \in \R$. 

  {\it Step 2: pointwise limits and bounds of the integrands.}
  We first remark that
$$
\lim_{x\to \pm \infty} u^\eps (x,t) = u^\pm \, .
$$
Indeed, we know that $u^\eps$ is non-decreasing and it can be
shown for instance that $u^\eps -U_0 \in L^1 (\R)$. This fact
can be proved by splitting methods for instance.

Hence, thanks to Dini theorem for cumulative distribution functions,
we know that for fixed~$t>0$, $\lim_{\varepsilon \rightarrow 0}
u^\varepsilon(t)$ converges toward $U(t)$ uniformly on~$\R$.

  Let us next recall that~$\partial_x p_1(t) \in L^1(\R)$, so that for
  fixed~$\tau \in (0,1)$
$$
\lim_{\varepsilon \rightarrow 0} \partial_x p_1(1-\tau) \ast
\frac{(u^\varepsilon(\tau))^2}{2}=\partial_x p_1(1-\tau) \ast
\frac{(U(\tau))^2}{2} \quad \mbox{uniformly on~$\R$} .
$$
 It follows from classical approximate unit
properties of the heat kernel~$p_2(x,t)$ that for all~$\tau \in
(0,1)$,
\begin{multline}\label{cauchy2}
  \lim_{\varepsilon \rightarrow 0} p_2(\varepsilon(1-\tau))
  \ast \partial_x p_1(1-\tau) \ast \frac{(u^\varepsilon(\tau))^2}{2}
  = \partial_x p_1(1-\tau) \ast \frac{(U(\tau))^2}{2}
\end{multline}
uniformly on~$\R$. In particular, for all~$\tau \in (0,1)$, we have
also
\begin{multline}
\label{cauchy3}
\lim_{\varepsilon \rightarrow 0} p_2(\varepsilon(1-\tau)) \ast
p_1(1-\tau) \ast (u^\varepsilon(\tau) u_x^\varepsilon(\tau)) =
p_1(1-\tau) \ast (U(\tau) U_x(\tau))
\end{multline}
uniformly on~$\R$, since
\begin{multline*}
  p_2(\varepsilon(1-\tau)) \ast \partial_x p_1(1-\tau) \ast  \frac{(u^\varepsilon(\tau))^2}{2}\\
  =p_2(\varepsilon(1-\tau)) \ast p_1(1-\tau) \ast (u^\varepsilon(\tau)
  u_x^\varepsilon(\tau))
\end{multline*}
and $\partial_x p_1(1-\tau) \ast \frac{U^2(\tau)}{2}=p_1(1-\tau) \ast
(U(\tau) U_x(\tau)) $.

Moreover, by~\eqref{est:inf}, \eqref{ux dec} with $p = + \infty$ and
\eqref{est:grad:cauchy}, one can see that the integrands
of~\eqref{duh:eps:bis} are pointwise bounded by
\begin{equation}
  \Big\| p_2(\varepsilon (1-\tau)) \ast \partial_x p_1(1-\tau) \ast
  \frac{(u^\varepsilon(\tau))^2}{2}\Big\|_\infty \leq C_0(1-\tau)^{-1}
  \frac{\|u_0\|^2_\infty}{2},\label{cauchy4}
\end{equation}
and
\begin{equation}
  \Big\| p_2(\varepsilon (1-\tau)) \ast p_1(1-\tau) \ast
  (u^\varepsilon(\tau) u_x^\varepsilon(\tau)) \Big\|_\infty \leq
  \tau^{-1} \|u_0\|_\infty.
\label{cauchy5}
\end{equation}

{\it Step 3: passage  to the limit.} Recall that~
$$
\lim_{\varepsilon \rightarrow 0} S_1^\varepsilon(1) U_0 = S_1(1)
U_0=p_1(1) \ast U_0
$$ 
in~$L^p(\R)$ for all~$p \in [1,{+\infty}]$. Let us recall
that~$U_0(x)=\pm1/2$ for~$\pm x \geq 0$ and $\int_\R p_1(y,1) dy=1$,
so that for all~$x \in \R$
$$
1/2+p_1(1) \ast U_0(x)= p_1(1) \ast (U_0+1/2) (x)=\int_{-\infty}^x
p_1(y,1)dy=H_1(x,1).
$$
We have proved in particular that~$\lim_{\varepsilon \rightarrow 0}
S_1^\varepsilon(1) U_0 =-1/2+H_1(1)$ pointwise on~$\R$.

In order to pass to the limit in the integral terms
of~\eqref{duh:eps:bis}, we use the Lebesgue dominated convergence
theorem. We deduce from~\eqref{cauchy2} and~\eqref{cauchy4} that for
all~$x \in \R$, the first integral term converges toward
$$
\int_0^{1/2} \partial_x p_1(1-\tau) \ast  \frac{(U(\tau))^2}{2} \, (x)\; d\tau
$$
as~$\varepsilon \rightarrow 0$. In the same way, we deduce
from~\eqref{cauchy3} and~\eqref{cauchy5} that the last integral term
converges toward
$$
\int_{1/2}^1  p_1(1-\tau) \ast (U(\tau)U_x(\tau)) \, (x) \, d\tau.
$$

The limit as~$\varepsilon \rightarrow 0$ in~\eqref{duh:eps:bis} then
implies that for all~$x \in \R$,
\begin{multline*}
  U(x,1)=-1/2+H_1(x,1)
  -\int_0^{1/2} \partial_x p_1(1-\tau) \ast  \frac{U^2(\tau)}{2} \, (x)\; d\tau\\
  -\int_{1/2}^1 p_1(1-\tau) \ast (U(\tau)U_x(\tau)) \, (x) \, d\tau.
\end{multline*} 
This completes the proof of~\eqref{duh:self}, thanks to the
self-similarity of~$U$.
\end{proof}

\begin{proof}[Proof of Theorem~\ref{thm:comp:cauchy}] 
We have to prove
that for all~$r>0$
\begin{equation}\label{eq:comp:cauchy}
  \mathbb{P}(|X-\overline{c}|<r) <\mathbb{P}(|Y-0|<r).
\end{equation}
Let us verify that~$\overline{c}$ and~$0$ are the medians of~$X$
and~$Y$, respectively. First, a simple computation allows to see
that~$p_1(x,1)$, defined by Fourier transform
by~$\widehat{p_1}(\xi,1)=e^{-|\xi|}$, also satisfies
formula~\eqref{cauchy}. This density of probability is even and the
median of~$Y$ is null. Second, by property~p3 of
Theorem~\ref{thm:qualitative},~$U_x(1)$ is symmetric w.r.t. to the
axis~$\left\{x=\overline{c}\right\}$ and the median of~$X$
is~$\overline{c}=\frac{u_-+u_+}{2}$.

In particular, the centered random variable~$X-\overline{c}$ admits a
density being the even function
$$
f_{X-\overline{c}}(x)=U_x(x+\overline{c},1).
$$
It becomes clear that~\eqref{eq:comp:cauchy} is equivalent to the
following property
\begin{equation}\label{comp:tech:1}
  \forall x>0 \quad F_{X-\overline{c}}(x)<F_Y(x), 
\end{equation}
where~$F_{X-\overline{c}}$ and~$F_Y$ are the cumulative distribution
functions of~$X-\overline{c}$ and~$Y$, respectively.

Let us compute these functions. First, we have seen above
that~$f_{X-\overline{c}}(x)=V_x(x,1)$, where~$V$ is defined by the
transformation~\eqref{transformation}. Let us recall that~$V$ is the
self-similar solution to~\eqref{e1.1} with initial
datum~$V(x,0)=\pm1/2$ for~$\pm x>0$. Hence,~$F_{X-\overline{c}}$ is
equal to~$V(\cdot,1)$ up to an additive constant, which has to
be~$1/2$ by property~p2 of Theorem~\ref{thm:qualitative}; that is to
say, we have~$F_{X-\overline{c}}(x)=1/2+V(x,1)$ for all~$x \in
\R$. Second, we defined $H_1$ in Proposition~\ref{prop:duhamel} such
that~$F_Y(x)=H_1(x,1)$. By this proposition, we have for all~$x
\in \R$,
\begin{equation*}
F_{X-\overline{c}}(x)=F_Y(x)-g(x),
\end{equation*}
where~$g(x)$ is defined by
\begin{multline}
  \label{def:difference:cauchy}
  g(x)\equiv \int_0^{1/2} \partial_x p_1(1-\tau) \ast
  \frac{V^2(\cdot/\tau,1)}{2} \, (x)\; d\tau\\
  +\int_{1/2}^1 \tau^{-1} \, p_1(1-\tau) \ast
  (V(\cdot/\tau,1)V_x(\cdot/\tau,1)) \, (x) \, d\tau.
\end{multline}
One concludes that the proof of~\eqref{comp:tech:1}, and thus
of~\eqref{eq:comp:cauchy}, is equivalent to the proof of the
positivity of~$g(x)$ for positive~$x$. But, by definition of~$g$, it
suffices to prove that for each~$\tau \in (0,1)$ and~$x>0$,
\begin{equation}\label{eq:sign:g}
p_1(1-\tau) \ast (V(\cdot/\tau,1)V_x(\cdot/\tau,1)) \, (x)>0.
\end{equation} 
Indeed, the second integral term in~\eqref{def:difference:cauchy}
would be positive, and the first integral term also, since for
fixed~$\tau$,
$$
\partial_x p_1(1-\tau) \ast \frac{V^2(\cdot/\tau,1)}{2} \,
(x)=\tau^{-1} \, p_1(1-\tau) \ast (V(\cdot/\tau,1)V_x(\cdot/\tau,1))
\, (x).
$$

Let us end by proving inequality \eqref{eq:sign:g}, thus concluding
Theorem~\ref{thm:comp:cauchy}. It is clear
that the function $V(\cdot/\tau,1)V_x(\cdot/\tau,1)$ is odd, since~$V(1)$ is
odd. Moreover, we already know that~$V_x(1)$ is non-negative, even and
non-increasing on~$(0,+\infty)$, since~$V(1)$ is non-decreasing, odd
and concave on~$[0,+\infty)$. By property~p5, we conclude
that~$V_x(1)$ is positive a.e. on~$(0,+\infty)$, and thus on~$\R$ as
even function. In particular,~$V(1)$ is increasing and for
all~$x>0$,~$V(x,1)>V(0,1)=0$.

To summarize,~$V(\cdot/\tau,1)V_x(\cdot/\tau,1)$ is odd and positive
on~$(0,+\infty)$. Moreover, it is clear that~$p_1(1-\tau)$ is
positive, even and decreasing on~$(0,+\infty)$, see~\eqref{cauchy}. A
simple computation then implies that the convolution product
in~\eqref{eq:sign:g} is effectively positive for positive~$x$. The
proof of Theorem~\ref{thm:comp:cauchy} is complete.  
\end{proof}

%%%%%%%%%%%%%%%%%%%%%%%%%%%%%%%%%%%%%%%%%%%%%%%%%%%

\appendix

\section{A key estimate}\label{appendix-KMX}

Here is an estimate from the lines of~\cite[Proof of Lemma 3.1]{kmx}. 
\begin{lemma}[inspired from~\cite{kmx}]\label{KMX}
Let~$\alpha \in (0,2]$ and let us consider a function~$v$ such that for all~$a>0$,
$v \in C^\infty_b(\R \times (a,+\infty)) \cap L^\infty(0,+\infty;L^1(\R))$. 
Assume that for all~$p \in [2,+\infty)$ and~$t>0$,
\begin{equation}\label{assumptionii}
\int_{\R} v_t |v|^{p-2} v \, dx+\int_\R |v|^{p-2} v \Lambda^\alpha v dx \leq 0.
\end{equation}
Then there is a constant~$C=C(\alpha)>0$ such that for all~$p \in [1,+\infty]$ and all~$t>0$
\begin{equation}\label{kmx esti}
\|v(t)\|_p \leq Ct^{-\frac{1}{\alpha} \left(1-\frac{1}{p}\right)} \|v\|_{L^\infty(0,+\infty;L^1)}.
\end{equation}
\end{lemma}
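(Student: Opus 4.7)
The plan is to run a Nash--type energy argument combined with a fractional Stroock--Varopoulos inequality, working with $\Phi_p(t)\equiv \|v(t)\|_p$. First I would treat $p\in[2,+\infty)$ and then extend to $p=1$ and $p=+\infty$.

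\emph{Step 1: ODE for $\Phi_p^p$.} The assumed smoothness of $v$ away from $t=0$, combined with interpolation between the $L^1$-in-space bound and the $L^\infty$-bound, makes $t\mapsto\Phi_p(t)^p$ differentiable. Hypothesis~\eqref{assumptionii} then reads
$$
\frac{1}{p}\frac{d}{dt}\Phi_p(t)^p \;\leq\; -\int_\R |v|^{p-2} v\,\Lambda^\alpha v\,dx.
$$
Setting $w\equiv |v|^{p/2}\sgn(v)$, a Stroock--Varopoulos inequality adapted to $\Lambda^\alpha$ gives $\int_\R |v|^{p-2}v\,\Lambda^\alpha v\,dx\geq c_{p}\|\Lambda^{\alpha/2}w\|_2^2$ with $c_p\sim 1/p$. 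Combined with the one-dimensional fractional Nash inequality $\|w\|_2^{2(1+\alpha)}\leq C_\alpha\|w\|_1^{2\alpha}\|\Lambda^{\alpha/2}w\|_2^2$ and the identities $\|w\|_2^2=\Phi_p^p$, $\|w\|_1=\Phi_{p/2}^{p/2}$, this yields
$$
\frac{d}{dt}\Phi_p^p\;\leq\;-\frac{c_\alpha}{p}\,\frac{\Phi_p^{p(1+\alpha)}}{\Phi_{p/2}^{p\alpha}}.
$$

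\emph{Step 2: closed differential inequality and integration.} Since $p\geq 2$, Hölder's inequality gives $\Phi_{p/2}\leq \Phi_1^{1/(p-1)}\Phi_p^{(p-2)/(p-1)}$, and using $\Phi_1(t)\leq M\equiv\|v\|_{L^\infty(0,+\infty;L^1)}$ one obtains
$$
\frac{d}{dt}\Phi_p^p\;\leq\;-\frac{c_\alpha}{p}\,M^{-p\alpha/(p-1)}\,\Phi_p^{\,p(p-1+\alpha)/(p-1)}.
$$
Rewriting this as a lower bound for $\frac{d}{dt}\bigl(\Phi_p^{-p\alpha/(p-1)}\bigr)$ (which is non-negative since the exponent $1-\beta$ with $\beta\equiv (p(1+\alpha)-1)/(p-1)$ is negative), integrating from $\varepsilon>0$ to $t$ and dropping the non-negative term at $\varepsilon$ before sending $\varepsilon\to 0$, one deduces
$$
\Phi_p(t)\;\leq\; C(p,\alpha)\,M\,t^{-\frac{1}{\alpha}(1-\frac{1}{p})}\qquad\text{for all }p\in[2,+\infty).
$$

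\emph{Step 3: extension to $p\in\{1,+\infty\}$.} The case $p=1$ is exactly the assumed $L^\infty(0,+\infty;L^1)$-bound $M$. For $p=+\infty$, I would pass to the limit $p\to+\infty$: a careful tracking shows that, in the exponent $-(p-1)/(p\alpha)\cdot \log[(\beta-1)\tilde c_p]$ coming out of the integration in Step~2, the $p$-dependence remains bounded (equivalently, one may run a Moser-type iteration along $p_k=2^k$ on nested time intervals $[t(1-2^{-k}),t(1-2^{-(k+1)})]$, using the $L^{p_k}\to L^{p_{k+1}}$ estimate from Step~2 on each subinterval and closing the recursion). Intermediate values $p\in(1,2)\cup(2,+\infty)$ then follow from the log-convexity of $L^p$-norms by Hölder interpolation between $p=1$ and the dyadic exponents, which keeps the constant of the form $C(\alpha)$.

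\emph{Main obstacle.} The technical heart is the passage to $p=+\infty$: the Stroock--Varopoulos constant $c_p$ and the Hölder interpolation weight $1/(p-1)$ both degenerate as $p\to+\infty$, so the constant $C(p,\alpha)$ produced by Step~2 must be shown to remain bounded in $p$. Everything else (Nash inequality, ODE integration, interpolation) is standard once the $p$-explicit bookkeeping of the constants has been done.
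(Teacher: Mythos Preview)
Your Steps~1 and~3 (the use of Stroock--Varopoulos, Nash, and interpolation) are exactly the paper's ingredients. The gap is in Step~2 and in your first sentence of Step~3: the closed ODE you obtain by H\"older-interpolating $\Phi_{p/2}\leq \Phi_1^{1/(p-1)}\Phi_p^{(p-2)/(p-1)}$ does \emph{not} give a constant bounded in~$p$. Integrating $y'\leq -A\,y^{\gamma}$ with $y=\Phi_p^p$, $\gamma=1+\frac{\alpha}{p-1}$, and $A$ of order $1$ (or $1/p$, it does not matter) yields, after taking $p$th roots,
\[
\Phi_p(t)\;\leq\;\bigl((\gamma-1)A\bigr)^{-\frac{p-1}{p\alpha}}\,M\,t^{-\frac{1}{\alpha}(1-\frac{1}{p})},
\]
and since $(\gamma-1)A\sim c/p$, the prefactor behaves like $(p/c)^{1/\alpha}\to+\infty$. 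So the claim ``the $p$-dependence remains bounded'' is wrong, and your Moser-on-nested-time-intervals fallback, while plausible, is heavier than what is actually needed.

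The paper's cure is to \emph{not} close the inequality via H\"older at each fixed~$p$, but to iterate along $p=2^n$: apply Nash with $w=|v|^{2^n}$, so that $\|w\|_1=\Phi_{2^n}^{2^n}$ is controlled by the \emph{inductive time-decay bound} $\Phi_{2^n}(t)\leq C_n\,M\,t^{-\frac{1}{\alpha}(1-2^{-n})}$. This puts an explicit power $t^{2^{n+1}-2}$ into the ODE coefficient, and after integrating one gets
\[
C_{n+1}\;=\;C_n\,\Bigl(\tfrac{C_N}{2\alpha}\Bigr)^{2^{-n-1}/\alpha}\,2^{\,n\,2^{-n-1}/\alpha},
\]
whose logarithm is a convergent series in~$n$, hence $\sup_n C_n<+\infty$. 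No time localization is needed. The rest (interpolation to general $p$, and $p=1$) is as you describe.
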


The proof is based on the so-called Nash and Strook-Varopoulos inequalities.
\begin{lemma}[Nash inequality]\label{Nash}
Let~$\alpha>0$. There exists a constant~$C_N>0$ such that for all~$w \in L^1(\R)$ satisfying
~$\Lambda^{\alpha/2} w \in L^2(\R)$, one has
$$
\|w\|_2^{2(1+\alpha)} \leq C_N\|\Lambda^{\alpha/2} w\|_2^2 \|w\|_1^{2\alpha}.
$$
\end{lemma}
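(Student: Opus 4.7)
The plan is to use the Fourier transform and split the frequency domain into low and high frequencies, then optimize the cutoff. By Plancherel's theorem we have $\|w\|_2^2=\|\widehat w\|_2^2$ and $\|\Lambda^{\alpha/2}w\|_2^2=\int_\R |\xi|^\alpha|\widehat w(\xi)|^2\,d\xi$; moreover, since $w \in L^1(\R)$, the elementary estimate $\|\widehat w\|_\infty \leq \|w\|_1$ holds.

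For any $R>0$, I would split
\begin{equation*}
\|w\|_2^2 = \int_{|\xi| \leq R} |\widehat w(\xi)|^2\,d\xi + \int_{|\xi|>R} |\widehat w(\xi)|^2\,d\xi.
\end{equation*}
The low-frequency part is controlled by the $L^1$-norm: $\int_{|\xi|\leq R}|\widehat w|^2\,d\xi\leq 2R\|\widehat w\|_\infty^2\leq 2R\|w\|_1^2$. The high-frequency part is controlled by the seminorm: since $|\xi|^{-\alpha}\leq R^{-\alpha}$ for $|\xi|>R$, we get $\int_{|\xi|>R}|\widehat w|^2\,d\xi\leq R^{-\alpha}\int_\R |\xi|^\alpha|\widehat w|^2\,d\xi=R^{-\alpha}\|\Lambda^{\alpha/2}w\|_2^2$. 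Combining,
\begin{equation*}
\|w\|_2^2 \leq 2R\|w\|_1^2 + R^{-\alpha}\|\Lambda^{\alpha/2}w\|_2^2.
\end{equation*}

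The right-hand side is a function of $R>0$ which tends to $+\infty$ at both endpoints; minimizing it by differentiation yields the optimal choice $R^{\alpha+1}=\frac{\alpha \|\Lambda^{\alpha/2}w\|_2^2}{2\|w\|_1^2}$ (we may assume both $\|w\|_1>0$ and $\|\Lambda^{\alpha/2}w\|_2>0$, else there is nothing to prove). Substituting this value of $R$ gives, after a routine computation, an inequality of the form
\begin{equation*}
\|w\|_2^2 \leq C\,\|w\|_1^{\frac{2\alpha}{\alpha+1}}\,\|\Lambda^{\alpha/2}w\|_2^{\frac{2}{\alpha+1}}
\end{equation*}
for a constant $C=C(\alpha)>0$. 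Raising both sides to the power $\alpha+1$ produces exactly the claimed inequality $\|w\|_2^{2(1+\alpha)}\leq C_N\|\Lambda^{\alpha/2}w\|_2^2\|w\|_1^{2\alpha}$ with $C_N=C^{\alpha+1}$.

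There is no genuine obstacle; the only minor point to check is that the manipulation above is justified when $\Lambda^{\alpha/2}w\in L^2(\R)$ is understood via its Fourier definition (i.e. $|\xi|^{\alpha/2}\widehat w \in L^2(\R)$), which is the standing assumption. The argument is completely classical and works in any space dimension with the appropriate constant; only the dimension-one constant $2R$ from the measure of the low-frequency ball needs to be adapted otherwise.
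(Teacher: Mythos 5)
Your proof is correct and follows essentially the same route as the paper: Plancherel, the low/high frequency splitting $\|w\|_2^2\leq 2R\|w\|_1^2+R^{-\alpha}\|\Lambda^{\alpha/2}w\|_2^2$, and optimization in $R$. The only (harmless) difference is that the paper first establishes the inequality for $\varphi\in\mathcal{D}(\R)$ and then invokes an approximation argument, whereas you apply the splitting directly to $w$, which is legitimate since the hypotheses $w\in L^1$ and $|\xi|^{\alpha/2}\widehat{w}\in L^2$ already guarantee $\widehat{w}\in L^2$ and hence the validity of Plancherel.
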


\begin{lemma}[Strook-Varopoulos inequality]\label{SV}
Let~$\alpha \in (0,2]$. For all~$p \in [2,+\infty)$ and~$w \in L^{p-1}(\R)$ satisfying
~$\Lambda^{\alpha} w \in L^\infty(\R)$, one has
$$
\int_\R |w|^{p-2} w \Lambda^\alpha w \, dx \geq \frac{4(p-1)}{p^2}  
\int_\R \left(\Lambda^{\alpha/2} |w|^{p/2} \right)^2 \, dx.
$$
\end{lemma}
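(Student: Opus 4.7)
The plan is to reduce the Strook--Varopoulos inequality to an elementary pointwise inequality in two real variables, via a symmetrization of the L\'evy--Khintchine representation of $\Lambda^\alpha$, and then to justify the manipulations by a mollification argument.

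First, I would exploit that on sufficiently regular $\phi$ one can collapse the two pieces in \eqref{eqn:LK}--\eqref{LK2} into the single principal-value formula
$$
\Lambda^\alpha \phi(x) = G_\alpha\,\mathrm{P.V.}\!\int_\R \frac{\phi(x)-\phi(x+z)}{|z|^{1+\alpha}}\,dz,
$$
the linear term in $\Lambda_r^{(\alpha)}$ dropping out by oddness of $z\mapsto \phi_x(x)z/|z|^{1+\alpha}$ on $\{|z|\le r\}$. Testing with $|w|^{p-2}w$ and symmetrizing in $(x,y)=(x,x+z)$ gives
$$
\int_\R |w|^{p-2}w\,\Lambda^\alpha w\,dx = \frac{G_\alpha}{2}\iint_{\R^2}\frac{(w(x)-w(y))\bigl(|w(x)|^{p-2}w(x)-|w(y)|^{p-2}w(y)\bigr)}{|x-y|^{1+\alpha}}\,dx\,dy.
$$
Applying the same representation to $\phi=|w|^{p/2}$, combined with the identity $\int \phi\,\Lambda^\alpha\phi\,dx = \|\Lambda^{\alpha/2}\phi\|_2^2$, yields in parallel
$$
\int_\R (\Lambda^{\alpha/2}|w|^{p/2})^2\,dx = \frac{G_\alpha}{2}\iint_{\R^2}\frac{(|w(x)|^{p/2}-|w(y)|^{p/2})^2}{|x-y|^{1+\alpha}}\,dx\,dy.
$$

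Second, these two Dirichlet-type representations would reduce the claim to the pointwise inequality
$$
(a-b)\bigl(|a|^{p-2}a - |b|^{p-2}b\bigr) \ge \tfrac{4(p-1)}{p^2}(|a|^{p/2}-|b|^{p/2})^2, \qquad a,b\in\R,\; p\ge 2.
$$
When $a,b$ have the same sign, WLOG $a\ge b\ge 0$, Cauchy--Schwarz applied to $t\mapsto(p/2)t^{p/2-1}$ on $[b,a]$ gives
$(a^{p/2}-b^{p/2})^2 = \bigl(\int_b^a \tfrac{p}{2}t^{p/2-1}dt\bigr)^2 \le (a-b)\int_b^a \tfrac{p^2}{4}t^{p-2}dt = \tfrac{p^2}{4(p-1)}(a-b)(a^{p-1}-b^{p-1})$, which is exactly what is needed. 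When $a>0>b$, set $c=-b>0$; direct expansion shows $(a+c)(a^{p-1}+c^{p-1})\ge a^p+c^p \ge (a^{p/2}-c^{p/2})^2$, and the inequality follows from $4(p-1)/p^2 \le 1$, i.e.\ $(p-2)^2\ge 0$.

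The main obstacle, in my view, will be the justification of the symmetrization and Fubini steps under the minimal regularity $w\in L^{p-1}(\R)$, $\Lambda^\alpha w\in L^\infty(\R)$: neither side is defined a priori through absolutely convergent integrals. I would handle this by approximating $w$ by $w_\eps = w*\rho_\eps$, for which everything is classical and the symmetrized formulas are manifestly valid, then passing to the limit. On the right-hand side, Fatou's lemma applied to the nonnegative symmetric kernel suffices. On the left, one uses $|w_\eps|^{p-2}w_\eps\to |w|^{p-2}w$ in a suitable Lebesgue topology together with $\Lambda^\alpha w_\eps\to\Lambda^\alpha w$ weak-$\ast$ in $L^\infty$, the hypothesis $\Lambda^\alpha w\in L^\infty$ being precisely what keeps the LHS under control.
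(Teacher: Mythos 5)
Your proposal is correct in its essentials and rests on the same core strategy as the paper: symmetrize the L\'evy--Khintchine representation into a Dirichlet-form double integral and reduce the claim to an elementary two-point inequality, whose constant $\frac{4(p-1)}{p^2}$ comes from the exponents $2/p$ and $2(1-1/p)$. The differences are in execution. First, the paper works with the truncated operator $\Lambda_r^{(0)}$ (kernel integrable away from the origin), proves the inequality for nonnegative $\varphi\in\mathcal{D}(\R)$, handles signed functions via Kato's inequality with $\eta(\cdot)=|\cdot|$, and only then lets $r\to 0$; you instead work directly with the principal-value operator and prove a two-point inequality $(a-b)\bigl(|a|^{p-2}a-|b|^{p-2}b\bigr)\ge \frac{4(p-1)}{p^2}\bigl(|a|^{p/2}-|b|^{p/2}\bigr)^2$ valid for all real $a,b$, which removes the Kato step altogether; your mixed-sign verification is correct. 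Be aware, though, that the truncation in the paper is precisely what makes the symmetrization and Fubini manipulations immediate for merely bounded data, so in your route you should either mollify before symmetrizing or reinstate a truncation $|z|>r$ and pass $r\to0$ at the end. Second, your Cauchy--Schwarz argument on $\int_b^a \frac{p}{2}t^{p/2-1}\,dt$ is a cleaner proof of the pointwise inequality than the paper's Step~1, which establishes the equivalent inequality $(a^\beta-b^\beta)(a^\gamma-b^\gamma)\ge\beta\gamma(a-b)^2$ for $\beta+\gamma=2$ by a two-derivative calculus computation. Third, both you and the paper leave the extension from smooth (or mollified) functions to general $w$ essentially to an approximation remark; two small caveats specific to your version: (i) your limiting argument on the left-hand side pairs $L^1$ against weak-$\ast$ $L^\infty$, but $w\in L^{p-1}(\R)$ only gives $|w_\eps|^{p-2}w_\eps\to|w|^{p-2}w$ in $L^{(p-1)/(p-2)}(\R)$, not in $L^1(\R)$, so you need either local convergence plus uniform tail control or a different duality pairing; (ii) the principal-value representation excludes $\alpha=2$, which the statement covers and the paper disposes of separately by noting that for $\alpha=2$ one has equality by direct computation.
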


\begin{rem}
\begin{enumerate}
\item In the case~$\alpha=2$, simple computations show that one has an equality in place of an inequality.
\item As
suggested by the proof below,
the second lemma is valid for all~$p \in [1,+\infty)$ with~$w,\Lambda^\alpha w \in L^p(\R)$, as well
as in the muldimensional case and for more
general operator~$\Lambda^\alpha$ satisfies the postive maximum principle
(see~\cite{h}).
\end{enumerate}
\end{rem}

Proofs and references for these results can be found in~\cite{kmx,k}. Let us give them for the sake of
completeness.

\begin{proof}[Proof of Lemma~\ref{Nash}]
Let us first prove the result for~$\varphi \in \mathcal{D}(\R)$. By Plancherel equality, one has
$$
\|\varphi\|_2^2=
\|\hat{\varphi}\|_2^2  \leq \int_{|\xi| <r} |\hat{\varphi}(\xi)|^ d\xi+r^{-\alpha} 
\int_{|\xi| \geq r} |\xi|^\alpha |\hat{\varphi}(\xi)|^2 d\xi, 
$$
for all~$r>0$. Then, one gets
$$
\|\varphi\|_2^2 \leq 2r \|\hat{\varphi}\|_\infty^2+r^{-\alpha} \|\Lambda^{\alpha/2} \varphi\|_2^2 
\leq 2r \|\varphi\|_1^2+r^{-\alpha} \|\Lambda^{\alpha/2} \varphi\|_2^2.
$$
Now an optimization w.r.t.~$r>0$ gives the result for~$\varphi$ smooth. The result for~$w$ as in the lemma is 
deduced by approximation. 
\end{proof}

\begin{proof}[Proof of Lemma~\ref{SV}]
Let us proceed in several steps.

\medskip

{\itshape Step 1: a first inequality.} Let us prove that for all~$\beta,\gamma>0$ such that
~$\beta+\gamma=2$, one has for all non-negative reals~$a,b$
\begin{equation}\label{ineq-k-1}
(a^\beta-b^\beta)(a^\gamma-b^\gamma) \geq \beta \gamma (a-b)^2.
\end{equation}
Let us assume without loss of generality that~$a > b>0$ and~$\beta \leq \gamma$. 
Developping each members of~\eqref{ineq-k-1}, one sees that this equation 
is equivalent to
\begin{multline*}
(1-\beta \gamma) \left(a^2+b^2\right) \\
\stackrel{?}{\geq} a^{\beta}b^{\gamma}+a^{\gamma}b^{\beta}-2 \beta \gamma ab=(ab)^\beta 
\left(a^{2(1-\beta)}+b^{2(1-\beta)}-2\beta \gamma (ab)^{1-\beta} \right).  
\end{multline*}
Since one has~$1-\beta \gamma=(1-\beta)^2$ and
$$
a^{2(1-\beta)}+b^{2(1-\beta)}-2\beta \gamma (ab)^{1-\beta}
=\left(a^{1-\beta}-b^{1-\beta} \right)^2+2(1-\beta \gamma)(ab)^{1-\beta},
$$
one deduces that~\eqref{ineq-k-1} is equivalent to
$$
(1-\beta)^2 \left(a^2+b^2-2ab\right)=(1-\beta)^2(a-b)^2 \stackrel{?}{\geq} 
(ab)^\beta \left(a^{1-\beta}-b^{1-\beta} \right)^2;
$$
that is to say, one has to prove that for all~$\beta \in (0,1]$ and~$a>b > 0$
$$
(1-\beta)(a-b)\stackrel{?}{\geq} 
(ab)^{\beta/2} \left(a^{1-\beta}-b^{1-\beta}\right).
$$
Dividing by~$b>0$ and denoting~$x$ the variable~$\frac{a}{b}$, one has to prove that for 
all~$\beta \in (0,1]$ and~$x>1$
$$
g(x) \equiv (1-\beta) (x-1)-x^{1-\beta/2}+x^{\beta/2} \stackrel{?}{\geq} 0.
$$
Since~$g$ is continuous w.r.t.~$x \in [1,+\infty)$ with~$g(1)=0$,
it suffices to prove that~$g'(x) \geq 0$ for all~$x >1$. One has
$$
g'(x)=1-\beta-\left(1-\beta/2\right)x^{-\beta/2}+\frac{\beta}{2} \, x^{-1+\beta/2}.
$$
Again~$g'$ is continuous with~$g(1)=0$, so that
the proof of~\eqref{ineq-k-1} reduces finally to the proof of the non-negativity of~$g''(x)$ for all~$x>1$. One has
$$
g''(x)=\frac{\beta}{2} (1-\beta/2) x^{-1-\beta/2}+\frac{\beta}{2} \, (-1+\beta/2) x^{-2+\beta/2},
$$
so that~$g''(x)\geq 0$ is equivalent to
$
x^{1-\beta} \stackrel{?}{\geq} 1,
$
which is true for~$\beta \in (0,1]$ and~$x>1$. The proof of~\eqref{ineq-k-1} is complete.

\medskip

{\itshape Conclusion.} Take~$\psi \in C_c(\R)$
and assume~$\psi \geq 0$. For all~$r>0$ and~$\beta,\gamma>0$, one has
\begin{align*}
& \int_{\R} \psi^\gamma \Lambda_r^{(0)} \psi^{\beta} \, dx\\ 
& = G_\alpha \int \int_{|x-y|>r}  \frac{(\psi^\beta(x)-\psi^\beta(y)) \psi^\gamma(x)}{|x-y|^{1+\alpha}} \, dxdy,\\
& = G_\alpha \int \int_{|x-y|>r}  \frac{(\psi^\beta(y)-\psi^\beta(x)) \psi^\gamma(y)}{|x-y|^{1+\alpha}} \, dxdy
\end{align*}
by changing the variable~$(x,y) \rightarrow (y,x)$ and using the fact that the measure
~$\frac{dxdy}{|x-y|}$ is symmetric. 
It follows that
\begin{equation*}
\int_{\R} \psi^\gamma \Lambda_r^{(0)} \psi^{\beta} \, dx \\
=
\frac{G_\alpha}{2} \int \int_{|x-y|>r} 
\frac{(\psi^\beta(y)-\psi^\beta(x)) (\psi^\gamma(y)-\psi^\gamma(x))}{|x-y|^{1+\alpha}}
\, dxdy.
\end{equation*}
On using Step 1, one deduces that for all~$\psi \in C_c(\R)$,
~$\psi \geq 0$, all~$\beta,\gamma>0$,~$\beta+\gamma=2$ and all~$t>0$, one has
\begin{equation}\label{ineq-k-2}
\int_{\R} \psi^\gamma \Lambda_r^{(0)} \psi^{\beta} \, dx
\geq \beta \gamma \int_{\R} \psi \Lambda_r^{(0)} \psi \, dx.
\end{equation}
Take now~$\varphi \in \mathcal{D}(\R)$,~$\varphi \geq 0$ and
~$p > 1$. Let us choose~$\psi=\varphi^{p/2}$,~$\beta=2/p$ and~$\gamma=2-\beta=2 \left(1-\frac{1}{p}\right)$. 
Equation~\eqref{ineq-k-2} gives:
\begin{equation*}
\int_{\R} \varphi^{p-1}  \Lambda_r^{(0)} \varphi \, dx
\geq \frac{4(p-1)}{p^2} \int_{\R} \varphi^{p/2} \Lambda_r^{(0)} \varphi^{p/2} \, dx.
\end{equation*}

Hence, for~$\varphi \in \mathcal{D}(\R)$ not necessarily non-negative, Kato inequality (with
~$\eta(\cdot) \equiv |\cdot|$ convex) implies 
\begin{align*}
& \int_\R |\varphi|^{p-2} \varphi \Lambda_r^{(0)} \varphi \, dx\\
& \geq \int_\R |\varphi|^{p-1}  \Lambda_r^{(0)} |\varphi| \, dx,\\
& \geq \frac{4(p-1)}{p^2} \int_{\R} |\varphi|^{p/2} \Lambda_r^{(0)} |\varphi|^{p/2} \, dx,\\
& = \frac{4(p-1)}{p^2}
\frac{G_\alpha}{2} \int \int_{|x-y|>r} 
\frac{(|\varphi|^{p/2}(y)-|\varphi|^{p/2}(x))^2}{|x-y|^{1+\alpha}}
\, dxdy.
\end{align*}
Passing to the limit as~$r \rightarrow 0$, one concludes that
\begin{align*}
& \int_{\R} |\varphi|^{p-2} \varphi  \Lambda^\alpha \varphi \, dx\\
& \geq \frac{G_\alpha}{2} \int_\R \int_\R
\frac{(|\varphi|^{p/2}(y)-|\varphi|^{p/2}(x))^2}{|x-y|^{1+\alpha}}
\, dxdy,\\
& =\frac{4(p-1)}{p^2} \int_{\R} \left(\Lambda^{\alpha/2}  |\varphi|^{p/2} \right)^2 \, dx.
\end{align*}
This proves the result for~$\varphi \in \mathcal{D}(\R)$ non-negative.
The proof for~$w$ as in the lemma is complete by approximation.
\end{proof}

Before proving Lemma~\ref{KMX}, one needs to establish a relationship between 
the differential inequality~\eqref{assumptionii}
and the~$L^p$-norm in space of~$v$:
\begin{lemma}\label{lemme-derivee-norme}
Let~$v$ such that~$v \in C^\infty_b(\R \times (a,+\infty)) \cap L^\infty((0,+\infty);L^1(\R))$ for all~$a>0$. 
Then for all~$p \in [2,+\infty)$, the function~$t>0 \rightarrow \|v(t)\|_p^p$
is locally Lipschitz-continuous with for a.e.~$t>0$
\begin{equation}\label{formule-derivee-norme}
\frac{1}{p} \frac{d}{dt} \|v(t)\|_p^p=\int_{\R} v_t(x,t) |v(x,t)|^{p-2} v(x,t) \, dx.
\end{equation}
\end{lemma}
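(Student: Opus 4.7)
The plan is to obtain the identity~\eqref{formule-derivee-norme} by classical differentiation under the integral sign, and then to deduce local Lipschitz-continuity from the fact that the pointwise derivative is bounded on compact subsets of $(0,+\infty)$. The argument rests on three observations: (i) pointwise $t$-differentiability of $(x,t) \mapsto |v(x,t)|^p$, (ii) finiteness of $\|v(t)\|_p^p$ uniformly for $t$ in compact subsets of $(0,+\infty)$, and (iii) an $L^1(\R)$-domination of $\partial_t |v|^p$ that is uniform on such compact sets.

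First I would note that, since $p\geq 2$, the map $\phi_p(s)\equiv|s|^p$ is of class $C^1(\R)$ with $\phi_p'(s)=p|s|^{p-2}s$. Combined with the smoothness of $v$ on $\R\times(a,+\infty)$ for every $a>0$, the chain rule gives
\[
\partial_t |v(x,t)|^p = p\,|v(x,t)|^{p-2} v(x,t)\,v_t(x,t)
\]
pointwise on $\R\times(0,+\infty)$.

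Next, I would fix $0<a<b$ and set $M_\infty \equiv \|v\|_{L^\infty(\R\times(a,+\infty))}$ and $M_1 \equiv \|v\|_{L^\infty(0,+\infty;L^1)}$, both finite by hypothesis. The elementary interpolation bound $|v|^p \leq M_\infty^{p-1}|v|$ then yields $\|v(t)\|_p^p \leq M_\infty^{p-1} M_1$ uniformly in $t\in[a,b]$, handling point~(ii). Applying the same device to the pointwise $t$-derivative gives
\[
\bigl|\,p\,|v|^{p-2} v\,v_t\,\bigr| \leq p\,M_\infty^{p-2}\,\|v_t\|_{L^\infty(\R\times(a,+\infty))}\,|v(x,t)|,
\]
whose right-hand side is integrable in $x$ with an $L^1$-norm bounded by a constant (independent of $t\in[a,b]$) times $M_1$. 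This supplies the required domination~(iii).

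The standard differentiation-under-the-integral theorem then yields that $t\mapsto\|v(t)\|_p^p$ is of class $C^1$ on $(0,+\infty)$, hence a fortiori locally Lipschitz-continuous, with derivative exactly~\eqref{formule-derivee-norme} (valid everywhere, not merely almost everywhere). I do not anticipate any real obstacle: the only mild subtlety is that the regularity of $v$ degenerates as $t\to 0$, but since the conclusion is purely local on $(0,+\infty)$ the whole argument is performed on an arbitrary fixed compact $[a,b]\subset(0,+\infty)$, where the smoothness of $v$ and the combined $L^1$--$L^\infty$ control are exactly what is needed.
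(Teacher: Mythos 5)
Your reduction to a fixed compact time interval $[a,b]$ and the interpolation bound $|v|^p\leq M_\infty^{p-1}|v|$ are fine, but the decisive step --- invoking ``the standard differentiation-under-the-integral theorem'' --- is not justified by the domination you provide. That theorem requires a \emph{single} function $g\in L^1(\R)$, independent of $t\in[a,b]$, with $\bigl|\partial_t|v(x,t)|^p\bigr|\leq g(x)$: one bounds the difference quotients via the mean value theorem by $g$ and applies dominated convergence. Your bound is $\bigl|\partial_t|v(x,t)|^p\bigr|\leq C\,|v(x,t)|$, a \emph{$t$-dependent} family of majorants whose $L^1$-norms are merely uniformly bounded by $C M_1$. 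Uniform boundedness of the $L^1$-norms of the majorants is not a substitute for a fixed dominating function (mass can escape to spatial infinity, exactly as in the classical failure of $\int f_n\to\int f$ when $f_n\to f$ pointwise with $\|f_n\|_1$ bounded). To repair your route you would need $\sup_{t\in[a,b]}|v(\cdot,t)|\in L^1(\R)$, and this does not follow from the hypotheses $v\in C^\infty_b\cap L^\infty(0,+\infty;L^1)$. The same missing uniform domination is why your stronger claims (derivative existing for every $t$, $C^1$ regularity of $t\mapsto\|v(t)\|_p^p$) overreach: the lemma is stated with an a.e.\ derivative precisely because continuity in $t$ of $\int_\R v_t|v|^{p-2}v\,dx$ is not available under these assumptions.

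The paper's proof avoids the problem by never differentiating under the integral sign directly. It truncates in space, setting $\varphi_n=v\,\theta_n$ with $|\varphi_n|\leq|v|$ and $\{\theta_n\}$ bounded in $C^k$, writes the exact identity
\begin{equation*}
\frac{\|\varphi_n(t)\|_p^p-\|\varphi_n(s)\|_p^p}{p}
=\int_{\R}\int_s^t |\varphi_n|^{p-2}\varphi_n\,\partial_\tau\varphi_n\,dx\,d\tau ,
\end{equation*}
and passes to the limit $n\to+\infty$ on both sides. There the dominated convergence theorem applies legitimately: the integrand is bounded, uniformly in $n$, by $C\,|v(x,\tau)|$, which is a fixed integrable function on the product $\R\times[s,t]$ (its integral is at most $C(t-s)M_1$). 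This yields the integrated identity
$\frac{1}{p}\bigl(\|v(t)\|_p^p-\|v(s)\|_p^p\bigr)=\int_s^t\bigl(\int_\R v_\tau|v|^{p-2}v\,dx\bigr)d\tau$,
from which local Lipschitz continuity and the a.e.\ derivative formula follow because the inner integral is locally bounded in $\tau$. If you want to keep the spirit of your argument, you should likewise prove the integrated (absolute-continuity) identity first and only then differentiate a.e., rather than attempt a pointwise interchange of $d/dt$ and $\int_\R$.
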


\begin{proof}
Let~$\{\varphi_n\}_{n \in \mathbb{N}} \in \mathcal{D}(\R \times (0,+\infty))$ be a sequence such that
$$
\begin{cases}
\lim_{n} \varphi_n = v \mbox{ in $C^k(K)$ for all compact~$K \subset \R \times (0,+\infty)$ and~$k \in \mathbb{N}$},\\
\mbox{$\{\varphi_n\}_{n \in \mathbb{N}}$ is bounded in~$C^k(\R \times (a,+\infty))$ for all~$a>0$ 
and~$k \in \mathbb{N}$},\\
|\varphi_n| \leq |v| \mbox{ for all~$n \in \mathbb{N}$}.
\end{cases}
$$
(such a sequence is easily constructed by taking~$\varphi_n \equiv v \theta_n$,
with~$0 \leq \theta_n \leq 1$,~$\theta_n \rightarrow 1$ in~$C^k(K)$ and~$\{\theta_n\}_{n \in \mathbb{N}}$
bounded in~$C^k(\R \times (a,+\infty))$).
One has for all~$p \in [2,+\infty)$ and~$t,s>0$,
$$
\frac{\|\varphi_n(t)\|_p^p-\|\varphi_n(s)\|_p^p}{p}=
 \int_{\R} \int_{s}^t |\varphi_n|^{p-2} \varphi_n \partial_{\tau} {\varphi_n}  \, dx d\tau .
$$
By the dominated convergence theorem, one gets
\begin{align*}
& \lim_{n \rightarrow +\infty} \frac{\|\varphi_n(t)\|_p^p-\|\varphi_n(s)\|_p^p}{p}\\
& = 
\lim_{n \rightarrow +\infty} 
 \int_{\R} \int_{s}^t  |\varphi_n|^{p-2} \varphi_n \partial_{\tau} {\varphi_n}  \, dx d\tau,\\
& = \int_{\R} \int_{s}^t v_{\tau} |v|^{p-2} v \, dx d\tau.
\end{align*}
But the dominated convergence theorem also allows to prove that
$$
\lim_{n \rightarrow +\infty} \frac{\|\varphi_n(t)\|_p^p-\|\varphi_n(s)\|_p^p}{p}
=\frac{\|v(t)\|_p^p - \|v(s)\|_p^p}{p}.
$$
By uniqueness of the limit, one deduces that 
$$
\frac{\|v(t)\|_p^p - \|v(s)\|_p^p}{p} \, = \int_{s}^t \left( \int_{\R} v_{\tau} |v|^{p-2} v \, dx \right) d\tau.
$$
Since~$\tau \rightarrow \int_{\R} v_{\tau}(x,\tau) |v(x,\tau)|^{p-2} v(x,\tau) \, dx$ is 
bounded outside all neighborhood of~$\tau=0$, the proof is complete.
\end{proof}

\begin{proof}[Proof of Lemma~\ref{KMX}]
The proof follows~\cite[Proof of Lemma 3.1]{kmx}.
One deduces from~\eqref{assumptionii} and Lemmata~\ref{SV} 
and~\ref{lemme-derivee-norme} that for all~$p \in [2,+\infty)$
and a.e.~$t>0$,
\begin{equation}\label{di1}
\frac{d}{dt} \|v(t)\|_p^p+4 \left(1-\frac{1}{p} \right)
\int_\R \left(\Lambda^{\alpha/2} |v|^{p/2} \right)^2 \, dx \leq 0.
\end{equation}
Let us now prove~\eqref{kmx esti} for~$p=2^n$ by induction on~$n \geq 1$. 
In the sequel,~$C_0$ denotes the constant~$\|v\|_{L^\infty(0,+\infty,L^1)}$.
For~$p=2$, one uses~\eqref{di1} and Lemma~\ref{Nash} to get:
$$
\frac{d}{dt} \|v(t)\|_2^2+2
C_N^{-1} C_0 ^{-2\alpha}\|v(t)\|_2^{2(1+\alpha)}  \leq 0,
$$
which leads to
$$
\|v(t)\|_2 \leq C_1 C_0 t^{-\frac{1}{2\alpha}} \quad \mbox{with} \quad C_1\equiv 
\left(\frac{C_N}{2\alpha} \right)^{\frac{1}{2 \alpha}}.
$$
Suppose now that for~$n \geq 2$ there is a constant~$C_n$ such that for all~$t>0$
$$
\|v(t)\|_{2^n} \leq C_n C_0 t^{-\frac{1}{\alpha} \left(1-2^{-n}\right)}.
$$
Then, for~$p=2^{n+1}$,~\eqref{di1} and Lemma~\ref{Nash} applied to~$w=v^{2^n}$ gives:
$$
\frac{d}{dt} \|v(t)\|_{2^{n+1}}^{2^{n+1}}+4 \left(1-2^{-n-1} \right)
C_N^{-1} \|v\|_{2^n}^{-2^{n+1}\alpha} \;  \|v(t)\|_{2^{n+1}}^{2^{n+1}(1+\alpha)}  \leq 0.
$$
By the inductive hypothesis, one gets
$$
\frac{d}{dt} \|v(t)\|_{2^{n+1}}^{2^{n+1}}+4 \left(1-2^{-n-1} \right)
C_N^{-1} \left(C_n C_0 \right)^{-2^{n+1}\alpha} \; t^{2^{n+1}-2}  \;  
\left(\|v(t)\|_{2^{n+1}}^{2^{n+1}} \right)^{(1+\alpha)}  \leq 0,
$$
which leads to
$$
\|v(t)\|_{2^{n+1}} \leq C_{n+1} C_0 t^{-\frac{(1-2^{-n-1})}{\alpha}} \quad \mbox{with} \quad C_{n+1}=C_n 
\left(\frac{C_N}{2\alpha}\right)^{\frac{2^{-n-1}}{\alpha}} \left(2^{n2^{-n-1}} \right)^{\frac{1}{\alpha}}.
$$
Now it rests to prove that~$\limsup_{n \rightarrow +\infty} C_n<+\infty$; indeed, the limit~$n \rightarrow +\infty$
in the inequality above will gives~\eqref{kmx esti} for~$p=+\infty$ and the proof of the lemma will be complete
by interpolation of the~$L^1$- and
~$L^\infty$-norms. 

One has
$$
\ln C_{n+1} - \ln C_n= \ln \left( \frac{C_{n+1}}{C_n} \right) =
\frac{2^{-n-1}}{\alpha} \ln \left(\frac{C_N}{2\alpha}\right) 
+\frac{n2^{-n-1}}{\alpha} \ln 2 \equiv u_n,
$$
where the serie~$\Sigma u_n$ is convergent. Summing up all these inequalities for~$n=1,\dots,N$, one gets 
for all~$N \geq 1$,
$
\ln C_{N+1}=\ln C_1+\Sigma_{n=1}^{N} u_n.
$
The limit~$N \rightarrow +\infty$ then gives:
$$
\lim_{n \rightarrow +\infty} \ln C_{n}=\ln C_1+\Sigma_{k=1}^{+\infty} u_k \in \R,
$$
so that~$\lim_{n \rightarrow +\infty} C_n$ exits in~$\R$.
\end{proof}

\section{Proof of Theorem~\ref{thm:convg}}\label{correction-point2-appendix}

Inequality from the following proposition is the starting point to
prove Theorem \ref{thm:convg}.
%-------------------------------
\begin{prop}\label{thm:fi} Let $u_0, \widetilde{u}_0 \in L^\infty(\R)$ and  $\varepsilon>0$. 
Let
 $\ue$ and $\widetilde{\ue}$   be
the solutions to \rf{uge1}--\rf{uge2} with the  initial data $u_0$ 
and $\widetilde{u}_0$, resp. Then 
\begin{equation}\label{est:fi}
\int_{-R}^R |\ue(x,t)-\widetilde{\ue}(x,t)|\,dx 
\leq \int_{-R-Lt}^{R+Lt} S_\alpha^\varepsilon(t) |u_0-\widetilde{u}_0|(x)\,dx 
\end{equation}
for all $t>0$ and $R>0$,
where 
\begin{equation}\label{LM}
L=\max_{z\in[-M,M]} |f'(z)|\quad\mbox{and} 
\quad M=\max \left\{ \|u_0\|_\infty,\|\widetilde{u_0}\|_\infty\right\}.
\end{equation}
\end{prop}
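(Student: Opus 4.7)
The plan is to derive a Kato-type entropy inequality for $w \equiv u^\varepsilon - \widetilde{u}^\varepsilon$ and then pair it with a non-negative dual test function $\psi$ obtained by running the linear semigroup $S_\alpha^\varepsilon$ backward in time starting from $\chi_{[-R,R]}$ at time $t$, while letting the indicator's endpoints expand outward at speed $L$ as $\tau$ decreases. This simultaneously encodes the nonlocal smoothing and the hyperbolic finite-speed-of-propagation coming from the nonlinear flux $f$.

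First, factor the flux difference linearly as $f(u^\varepsilon)-f(\widetilde{u}^\varepsilon) = g(x,\tau)\,w$ with
$$g(x,\tau)\equiv\int_0^1 f'\bigl(\theta u^\varepsilon+(1-\theta)\widetilde{u}^\varepsilon\bigr)\,d\theta,$$
so that $\|g\|_\infty\le L$ by the maximum principle~\eqref{est:inf}. Multiplying the resulting equation $w_\tau+\Lambda^\alpha w-\varepsilon w_{xx}+(gw)_x=0$ by $\eta_\delta'(w)$ for a $C^2$-convex smoothing $\eta_\delta$ of $|\cdot|$, using Kato's inequality~\eqref{ineq:kato} for $\Lambda^\alpha$ together with its local analogue $-\eta_\delta'(w)w_{xx}\ge -\eta_\delta(w)_{xx}$, and letting $\delta\to 0$ yields the distributional inequality
$$|w|_\tau+\Lambda^\alpha|w|-\varepsilon|w|_{xx}+(g\,|w|)_x\le 0.$$

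Second, set $a(\tau)\equiv -R-L(t-\tau)$, $b(\tau)\equiv R+L(t-\tau)$ and define
$$\psi(x,\tau)\equiv S_\alpha^\varepsilon(t-\tau)\chi_{[a(\tau),b(\tau)]}(x)=\int_{a(\tau)}^{b(\tau)} p_\alpha^\varepsilon(x-y,t-\tau)\,dy,$$
where $p_\alpha^\varepsilon(\cdot,s)\equiv p_\alpha(s)\ast p_2(\varepsilon s)$ is the (even, non-negative) convolution kernel of $S_\alpha^\varepsilon(s)$. Then $\psi\ge 0$, $\psi(\cdot,t)=\chi_{[-R,R]}$ and $\psi(\cdot,0)=S_\alpha^\varepsilon(t)\chi_{[-R-Lt,R+Lt]}$. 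Differentiating in $\tau$ using $\partial_s p_\alpha^\varepsilon(\cdot,s)=(-\Lambda^\alpha+\varepsilon\partial_x^2)\,p_\alpha^\varepsilon(\cdot,s)$, the moving endpoints contribute positively and give the identity
$$-\psi_\tau+\Lambda^\alpha\psi-\varepsilon\psi_{xx}=L\bigl(p_\alpha^\varepsilon(\cdot-a(\tau),t-\tau)+p_\alpha^\varepsilon(\cdot-b(\tau),t-\tau)\bigr).$$
Integrating $\partial_x p_\alpha^\varepsilon(x-y,t-\tau)=-\partial_y p_\alpha^\varepsilon(x-y,t-\tau)$ in $y$ over $[a,b]$ gives $\psi_x=p_\alpha^\varepsilon(\cdot-a,t-\tau)-p_\alpha^\varepsilon(\cdot-b,t-\tau)$, so by $\|g\|_\infty\le L$ and the triangle inequality,
$$|g\,\psi_x|\le L\bigl(p_\alpha^\varepsilon(\cdot-a,t-\tau)+p_\alpha^\varepsilon(\cdot-b,t-\tau)\bigr).$$
Subtracting yields the \emph{dual inequality} $-\psi_\tau+\Lambda^\alpha\psi-\varepsilon\psi_{xx}-g\,\psi_x\ge 0$.

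Third, test the inequality from Step~1 against $\psi$, integrate over $\R\times(0,t)$, and transfer the linear operators via~\eqref{ipp} and integration by parts in $x$ and $\tau$; all boundary terms at $|x|=\infty$ vanish because $\psi,\psi_x,\psi_{xx},\Lambda^\alpha\psi\in L^1(\R)$ (inherited from $p_\alpha^\varepsilon$) while $|w|\in L^\infty$. One obtains
$$\int_\R\psi(\cdot,t)|w(\cdot,t)|\,dx-\int_\R\psi(\cdot,0)|w_0|\,dx+\int_0^t\!\!\int_\R |w|\,\bigl\{-\psi_\tau+\Lambda^\alpha\psi-\varepsilon\psi_{xx}-g\,\psi_x\bigr\}\,dx\,d\tau\le 0.$$
The last integral is non-negative by the dual inequality, hence
$$\int_{-R}^R|w(x,t)|\,dx\le\int_\R S_\alpha^\varepsilon(t)\chi_{[-R-Lt,R+Lt]}(x)\,|w_0(x)|\,dx=\int_{-R-Lt}^{R+Lt}S_\alpha^\varepsilon(t)|w_0|(x)\,dx,$$
where the last equality uses the evenness of $p_\alpha^\varepsilon$. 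This is~\eqref{est:fi}.

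The main technical obstacle will be the lack of regularity of the terminal datum $\chi_{[-R,R]}$: the identity for $-\psi_\tau+\Lambda^\alpha\psi-\varepsilon\psi_{xx}$ breaks down as $\tau\to t^-$ since $p_\alpha^\varepsilon(\cdot,t-\tau)$ concentrates at the endpoints. One therefore works on $(0,t-\sigma)$ with the regularized test function $\psi_\sigma(x,\tau)\equiv S_\alpha^\varepsilon(\sigma+t-\tau)\chi_{[a(\tau),b(\tau)]}(x)$ and lets $\sigma\to 0^+$ at the end; one must verify that the sign of the dual inequality is preserved through this limit, as well as through the $\delta$-regularization of $|\cdot|$ used in Step~1, and that the boundary contribution at $\tau=0$ is recovered using $L^1$-continuity of the semigroup.
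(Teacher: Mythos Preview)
Your argument is correct and arrives at the same inequality, but the route is genuinely more direct than the paper's. The paper proceeds in two stages: it first derives entropy inequalities of Kruzhkov type for $u^\varepsilon$ and $\widetilde{u}^\varepsilon$ separately and then invokes the \emph{doubling of variables} technique (following \cite{A07}) to obtain the distributional comparison
\[
\int_\R\int_0^\infty |u^\varepsilon-\widetilde{u}^\varepsilon|\,\bigl(\phi_t+L|\phi_x|-(\Lambda^\alpha-\varepsilon\partial_x^2)\phi\bigr)\,dx\,dt+\int_\R|u_0-\widetilde u_0|\,\phi(x,0)\,dx\ge 0,
\]
after which it refers again to \cite[Subsection~4.2]{A07} for the choice of test function that turns this into~\eqref{est:fi}. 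You instead exploit the smoothness of the regularized solutions (Theorem~\ref{th2.1}) to subtract the equations directly, factor the flux as $f(u^\varepsilon)-f(\widetilde u^\varepsilon)=g\,w$, and apply Kato's inequality pointwise, which bypasses the doubling machinery entirely. Your explicit dual function $\psi(x,\tau)=S_\alpha^\varepsilon(t-\tau)\chi_{[a(\tau),b(\tau)]}$ with endpoints moving at speed $L$ is precisely the test function hidden in the reference to \cite{A07}; the identity and the bound $|g\psi_x|\le L\bigl(p_\alpha^\varepsilon(\cdot-a)+p_\alpha^\varepsilon(\cdot-b)\bigr)$ are both correct. The regularization at $\tau=t$ you flag is indeed the only delicate point, and the $\sigma$-shift you describe handles it. What your approach buys is a self-contained and shorter proof tailored to the smooth setting; what the paper's approach buys is a template that would survive verbatim for non-smooth entropy solutions (where direct subtraction is unavailable), which is why the authors chose to mirror \cite{A07}.
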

%-------------------------------
Even if this result does not appear in \cite{A07}, its proof is based
on ideas introduced in \cite[Thm~3.2]{A07}. This is the reason why we
only sketch the proof of Proposition~\ref{thm:fi}; the reader is
referred to \cite{A07} for more details.
%-----------------
\begin{proof}[Sketch of proof of Proposition \ref{thm:fi}]
The solution $u^\varepsilon$ of \rf{uge1}--\rf{uge2} satisfies 
\begin{multline}\label{eqn:entropy:reg}
  \int_{\R} \int_a^{+\infty} \Big(\eta(u^\varepsilon) \varphi_t
  +\phi(u^\varepsilon) \varphi_x\Big)\, dxdt\\
  +\int_{\R} \int_a^{+\infty} \Big( -\eta(u^\varepsilon)
  \Lambda_r^{(\alpha)} \varphi
  -\varphi \eta'(u^\varepsilon)  \; \Lambda_r^{(0)} u^\varepsilon \Big)\,dxdt \\
  -\varepsilon \int_{\R} \int_a^{+\infty} \left(\eta (u^\varepsilon)
  \right)_{x} \varphi_x\, dxdt +\int_{\R} \eta(u^\varepsilon(x,a))
  \varphi(x,a)\,dx \geq 0,
\end{multline} 
for all~$\varphi \in \mathcal{D}(\R \times [0,{+\infty}))$
non-negative,~$\eta \in C^2(\R)$ convex,~$\phi'=\eta' f'$
and~$a,r>0$. To show this inequality, it suffices to
mutliply~\eqref{uge1} by~$\eta'(u^\varepsilon) \varphi$, use the
Kato inequalities~\eqref{ineq:kato} and integrate by parts over the
domain~$\R \times [a,{+\infty})$.  Now, let us introduce the so-called
Kruzhkov entropy-flux pairs~$(\eta_k,\phi_k)$ defined for fixed~$k \in
\R$ and all~$u \in \R$ by
$$
\eta_k(u)\equiv|u-k| \quad \mbox{and} \quad
\phi_k(u)\equiv\mbox{sign}(u-k) \, \left(f(u)-f(k) \right),
$$
where ``sign'' denotes the sign function defined by
$$
\mbox{sign}(u)\equiv
\begin{cases}
1, & \mbox{~$u>0$},\\
-1, & \mbox{~$u<0$},\\
0, & \mbox{~$u=0$}.
\end{cases}
$$
Consider a sequence~$\{\eta_k^n\}_{n\in\mathbb{N}} \subset C^2(\R)$ of
convex functions converging toward~$\eta_k$ locally uniformly on~$\R$
and such that~$(\eta_k^n)' \rightarrow \mbox{sign}(\cdot-k)$ pointwise
on~$\R$ by being bounded by~$1$, as~$n \rightarrow {+\infty}$. The
associated fluxes~$\phi_k^n(u)\equiv\int_k^u \eta_k'(\tau) f'(\tau)
d\tau$ then converge toward~$\phi_k$ pointwise on~$\R$, as~$n
\rightarrow {+\infty}$, by being pointwise bounded by~$|\phi_k^n(u)| \leq
\mbox{sign}(u-k) \, \int_k^u |f'(\tau)| d\tau$. By the dominated
convergence theorem, the passage to the limit
in~\eqref{eqn:entropy:reg} with~$(\eta,\phi)=(\eta_k^n,\phi_k^n)$
gives
\begin{multline}\label{ineq:entropy:reg:2}
  \int_{\R} \int_a^{+\infty} \Big(|u^\varepsilon-k| \varphi_t 
  +\mbox{sign}(u^\varepsilon-k) \left(f(u^\varepsilon)-f(k) \right) \varphi_x\Big)\, dxdt\\
  + \int_{\R} \int_a^{+\infty} \Big( -|u^\varepsilon-k|
  \Lambda_r^{(\alpha)} \varphi
  -\varphi \, \mbox{sign}(u^\varepsilon-k)  \; \Lambda_r^{(0)} u^\varepsilon \Big)\,dxdt \\
  -\varepsilon \int_{\R} \int_a^{+\infty} \mbox{sign}(u^\varepsilon-k) \,
  u_x^\varepsilon \, \varphi_x \, dxdt+\int_{\R}
  |u^\varepsilon(x,a)-k| \varphi(x,a)\,dx \geq 0,
\end{multline} 
for all~$\varphi \in \mathcal{D}(\R \times [0,{+\infty}))$
non-negative,~$a,r>0$ and~$k \in \R$. In the same way, similar
inequalities hold true for~$\widetilde{u}^\varepsilon$.

On the basis of these inequalities, we claim that the well-known
doubling variable technique of Kruzhkov allows us to compare
$u_\varepsilon$ and~$\widetilde{u}_\varepsilon$. To do so, we have to
copy almost the same computations from \cite{A07}, since the beginning
of~\cite[Subsection 4.1]{A07} until~\cite[equation (4.11)]{A07}
with~$u=u^\varepsilon$ and~$v=\widetilde{u^\varepsilon}$. The only
difference comes from the term~$-\varepsilon \int_{\R} \int_a^{+\infty}
\mbox{sign}(u^\varepsilon-k) \, u_x^\varepsilon \, \varphi_x \, dxdt$
in~\eqref{ineq:entropy:reg:2} and the term~$-\varepsilon \int_{\R}
\int_a^{+\infty} \mbox{sign}(\widetilde{u^\varepsilon}-k) \,
\widetilde{u_x^\varepsilon} \, \varphi_x \, dxdt$ in the entropy
inequalities of~$\widetilde{u^\varepsilon}$. But, these new terms do
not present any particular difficulty, since~$u_\varepsilon$
and~$\widetilde{u}_\varepsilon$ are smooth. Arguing as in~\cite{A07},
one can show that for all~$\phi \in \mathcal{D}(\R \times [0,{+\infty}))$
non-negative and~$a>0$,
\begin{multline*}
  \int_{\R} \int_a^{+\infty} |u^\varepsilon-\widetilde{u^\varepsilon}|
  \left(\phi_t+L|\phi_x|-\Lambda^\alpha \phi \right) \, dxdt\\
  -\varepsilon \int_{\R} \int_a^{+\infty} \mbox{sign}
  (u^\varepsilon-\widetilde{u^\varepsilon}) \, (u^\varepsilon-\widetilde{u^\varepsilon})_x \, \phi_x \, dxdt\\
  +\int_{\R} |u^\varepsilon(x,a)-\widetilde{u^\varepsilon}(x,a)|
  \phi(x,a) \, dx \geq 0,
\end{multline*}
where~$L$ is defined
in~\eqref{LM}. Since~$|u^\varepsilon-\widetilde{u^\varepsilon}|$ is
Lipschitz-continuous on~$\R \times [a,{+\infty})$, its a.e. derivative is
equal to its distribution derivative with $ \mbox{sign}
(u^\varepsilon-\widetilde{u^\varepsilon}) \,
(u^\varepsilon-\widetilde{u^\varepsilon})_x=\left(|u^\varepsilon-\widetilde{u^\varepsilon}|\right)_x.
$ By integrating by parts, we deduce that
\begin{multline*}
  \int_{\R} \int_a^{+\infty} |u^\varepsilon-\widetilde{u^\varepsilon}| \left(\phi_t+L|\phi_x|-g [\phi] \right)\,dxdt\\
  +\int_{\R} |u^\varepsilon(x,a)-\widetilde{u^\varepsilon}(x,a)|
  \phi(x,a) \, dx \geq 0,
\end{multline*}
where~$g[\phi]\equiv
\left(\Lambda^\alpha-\varepsilon \partial_x^2\right) \phi$. Passing to
the limit as~$a \rightarrow 0$, thanks to the continuity with values
in~$L^1_{loc}(\R)$ of~$u^\varepsilon$ and~$\widetilde{u^\varepsilon}$
in Theorem~\ref{th2.1}, one can prove that for all non-negative $\phi \in
\mathcal{D}(\R \times [0,{+\infty}))$ 
\begin{multline}\label{ineq:difference}
  \int_{\R} \int_0^{+\infty} |u^\varepsilon-\widetilde{u^\varepsilon}| \left(\phi_t+L|\phi_x|-g [\phi] \right)\,dxdt\\
  +\int_{\R} |u_0(x)-\widetilde{u}_0(x)| \phi(x,0) \, dx \geq 0.
\end{multline}

This is almost the same equation as that in \cite[equation
(4.11)]{A07} with the diffusive operator
$g=\Lambda^\alpha-\varepsilon \partial_x^2$ instead
of~$g=\Lambda^\alpha$. Hence, we can argue exactly as
in~\cite[Subsection 4.2]{A07} replacing the kernel of~$\Lambda^\alpha$
by the kernel of the new
operator~$\Lambda^\alpha-\varepsilon \partial_x^2$. This gives the
desired inequality~\eqref{est:fi} in place of the
inequality~\cite[equation (3.1)]{A07}.
\end{proof}

\begin{proof}[Proof of Theorem~\ref{thm:convg}]
Now, we are in a position to prove the convergence result in Theorem
\ref{thm:convg}. The proof follows two steps: first we show the
relative compactness of the family of functions $\mathcal{F} \equiv
\{\ue :\varepsilon \in (0,1]\}$ and, next, we pass to the limit in
entropy inequalities.

%%%%%%%%%%%%%%%%%%%%%%%%%%%%%%%%%%%%%%%%%%%%%%%%%%%%%%%%%%%%%%%%

{\it Step 1: compactness.} Let us prove that 
\begin{equation}\label{rel-comp}
\mbox{$\mathcal{F}$ is relatively compact in $F \equiv C([0,T];L^1([-R,R]))$}
\end{equation}
for all $T,R>0$.
The space $F$ being a Banach space, the statement \eqref{rel-comp} 
is equivalent to the precompactness of $\mathcal{F}$: 
\begin{equation}\label{prop:pre}
\begin{split}
\forall \mu>0\quad &\exists \mathcal{F}_\mu \subseteq F\quad \mbox{relatively compact such that }\\
&\lim_{\mu \rightarrow 0} \sup_{\ue \in \mathcal{F}} \mbox{dist}_F(\ue,\mathcal{F}_\mu)=0.
\end{split}
\end{equation} 
To construct $\mathcal{F}_\mu$, we consider
an approximation of the Dirac mass
$$
\rho_\mu(x)\equiv \mu^{-1} \rho(\mu^{-1} x) 
$$
with a smooth, non-negative function $\rho=\rho(x)$, supported in
$ [-1,1]$ and such that
$\int_\R \rho(x)\,dx=1$.
Then we define 
$$
\mathcal{F}_\mu\equiv \left\{\ue_\mu : \varepsilon\in (0,1] \right\},
$$ 
where $\ue_\mu\equiv \ue \ast_x \rho_\mu$ and $\ast_x$ denotes the 
convolution product with respect to  the space variable. 

First, we have to prove that $\mathcal{F}_\mu$ is relatively compact in $F$. 
By estimate \eqref{est:inf}, it is clear that
\begin{equation}\label{esti-mu0}
\| \ue_\mu\|_\infty \leq \| u_0\|_\infty \quad \mbox{and} \quad 
\| \partial_x \ue_\mu \|_\infty \leq \| u_0 \|_\infty \| \partial_x \rho_\mu \|_1 .
\end{equation}
Moreover, using  equation \eqref{uge1} satisfied by $\ue$ we obtain
\begin{equation}\label{eqn:mu}
\partial_t \ue_\mu=-\Lambda^\alpha \ue_\mu
+\varepsilon \partial_{x}^2 \ue_\mu-( f(\ue))_x \ast_x \rho_\mu=0.
\end{equation}
Applying the equalities
 $\Lambda^\alpha \ue_\mu=\Lambda^\alpha (\ue \ast_x \rho_\mu)
= \ue \ast_x (\Lambda^\alpha \rho_\mu)$ we see that
\begin{equation*}
  \|\Lambda^\alpha \ue_\mu\|_\infty  \leq \| \ue\|_\infty \|\Lambda^\alpha \rho_\mu \|_1 
  \leq \| u_0 \|_\infty \|\Lambda^\alpha \rho_\mu \|_1.
\end{equation*}
The same way, one can prove that 
$$
\|\partial_x \ue_\mu\|_\infty \leq \| u_0 \|_\infty \|\partial_{x}^2 \rho_\mu \|_1
\mbox{ and } \|(f(\ue))_x \ast_x \rho_\mu\|_\infty \leq C(\| u_0
\|_\infty) \|\partial_x \rho_\mu \|_1.
$$
Consequently, it follows from equation \eqref{eqn:mu} that for every
fixed $\mu>0$, the time derivative of $\ue_\mu$ is bounded
independently of $\varepsilon\in (0,1]$.  By \eqref{esti-mu0} and the
Ascoli-Arzel\`a Theorem, we infer that $\mathcal{F}_\mu$ is relatively
compact in $C_b([-R,R] \times [0,T])$ and, \textit{a fortiori}, in
$F$.

Next, we have to prove that 
$\lim_{\mu \rightarrow 0} \sup_{\ue \in \mathcal{F}} \mbox{dist}_F(\ue,\mathcal{F}_\mu)=0$. 
Applying Theorem \ref{thm:fi} to the following simple inequality
$$
\|\ue(t) -\ue_\mu(t)\|_{L^1([-R,R])} \leq \int_{-R}^R \int_{-\mu}^\mu |\ue(x,t)-\ue(x-y,t)| \rho_\mu(y) \,dxdy
$$
 we get
\begin{equation*}
\begin{split}
\|\ue(t) -\ue_\mu(t)\|_{L^1([-R,R])} & \leq  \sup_{|y| \leq \mu} \int_{-R}^R |\ue(x,t)-\ue(x-y,t)| \, dx, \\
 & \leq  \sup_{|y| \leq \mu} \int_{-R-Lt}^{R+Lt} 
S_\alpha^\varepsilon(t) v_0^y(x)\,dx,
\end{split}
\end{equation*}
where $v_0^y(x)=|u_0(x)-u_0(x-y)|$.  Consequently, by Lemma
\ref{lem:translation} in Appendix~\ref{appendix}, we see that there
exists a modulus of continuity~$\omega$ such that for all $r>0$ and
$\varepsilon \in (0,1]$
$$
\|\ue -\ue_\mu\|_{F} \leq \sup_{|y| 
\leq \mu} \int_{-R-LT-r}^{R+LT+r} v_0^y(x)\,dx + \|v_0^y\|_\infty \omega(1/r).
$$
The continuity of the translation in $L^1$ implies that
$$
\lim_{\mu \rightarrow 0} \sup_{|y| \leq \mu} \int_{-R-LT-r}^{R+LT+r}
v_0^y(x)\,dx=0.
$$
Hence, it is clear that $\lim_{\mu \rightarrow 0} \sup_{\varepsilon
  \in (0,1]} \|\ue -\ue_\mu\|_{F}=0$, which proves \eqref{prop:pre}
and thus \eqref{rel-comp}.

{\it Conclusion: passage to the limit.}  It follows from the first
step that there exists $v \in C([0,{+\infty});L^1_{loc}(\R))$ such that
$\lim_{\varepsilon \rightarrow 0} \ue = v$ (up to a subsequence) in
$C([0,T];L_{loc}^1(\R))$ for all $T>0$.  Passing to another
subsequence, if necessary, we can assume that $\ue \rightarrow v$
a.e. From inequality \eqref{est:inf}, we deduce that $v \in
L^\infty(\R \times (0,{+\infty}))$.  What we have to prove is that $v=u$,
however, by the uniqueness of entropy solutions ({\it cf.} Theorem
\ref{thm:entropy}), it suffices to show that $v$ is an entropy
solution to \eqref{eg1.1}--\eqref{eg1.2}.

Let $\eta \in C^2(\R)$ be convex, $\phi'=\eta' f'$ and
$r>0$. Integrating by parts the term~$-\varepsilon \int_{\R}
\int_a^{+\infty} \left(\eta (u^\varepsilon) \right)_{x} \varphi_x\, dxdt$
in~\eqref{eqn:entropy:reg} and passing to the limit~$a \rightarrow 0$
in this inequality, we get
\begin{equation*}
\begin{split}
  \int_{\R} \int_0^{+\infty}
  &\left(\eta(\ue) \varphi_t +\phi(\ue) \varphi_x-\eta(\ue) \Lambda_r^{(\alpha)} \varphi 
-\varphi \eta'(\ue)  \; \Lambda_r^{(0)} \ue \right)\,dxdt \\
  &+\int_{\R} \eta(u_0(x)) \varphi(x,0)\,dx \geq -\varepsilon
  \int_{\R} \int_0^{+\infty} \eta(\ue) \varphi_{xx} \,dxdt.
\end{split}
\end{equation*}
Finally, let us recall that $\ue \rightarrow v$ a.e. as $\varepsilon
\rightarrow 0$ and that $\ue$ is bounded in $L^\infty$-norm by
$\|u_0\|_\infty$.  Hence, the Lebesgue dominated convergence theorem
allows us to pass to the limit, as $\varepsilon \to 0$, in the
inequality above and to deduce that
\begin{equation*}
\begin{split}
  \int_{\R} \int_0^{+\infty} \Big(\eta(v) \varphi_t +\phi(v) \varphi_x
  -&\eta(v) \Lambda_r^{(\alpha)} \varphi
  -\varphi \eta'(v)  \; \Lambda_r^{(0)} v \Big)\,dxdt \\
  &+\int_{\R} \eta(u_0(x)) \varphi(x,0)\,dx \geq 0.
\end{split}
\end{equation*}
Hence, according to Definition \ref{def:entropy} and Theorem
\ref{thm:entropy}, the function $v$ is the unique entropy solution to
\eqref{eg1.1}--\eqref{eg1.2}.  The proof of Theorem \ref{thm:convg} is
complete.  
\end{proof}

\section{Additional technical lemmata}\label{appendix}

\begin{lemma}\label{lem:translation}
  There exists a modulus of continuity $\omega$ such that for all $v_0
  \in L^\infty(\R)$, all $T,R,r>0$, and all $ \varepsilon\in (0,1]$,
  we have
\begin{equation*}
  \sup_{t \in [0,T]} \int_{-R-Lt}^{R+Lt} S_\alpha^\varepsilon(t) |v_0|(x)\,dx 
  \leq \int_{-R-LT-r}^{R+LT+r} |v_0(x)|\,dx 
  + \|v_0\|_\infty \omega\left(1/r\right).
\end{equation*}
\end{lemma}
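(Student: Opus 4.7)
The plan is to rewrite $S_\alpha^\varepsilon(t)$ as convolution with the symmetric probability density $K_\varepsilon(\cdot,t) := p_\alpha(\cdot,t) \ast p_2(\cdot, \varepsilon t)$ and to exploit Fubini twice. Since $[-R-Lt, R+Lt] \subset [-R-LT, R+LT]$ for $t \in [0,T]$ and $S_\alpha^\varepsilon(t)|v_0| \geq 0$, I will first replace the $t$-dependent window by the larger $[-R-LT, R+LT]$ and compute
$$\int_{-R-LT}^{R+LT} S_\alpha^\varepsilon(t)|v_0|(x)\,dx = \int_\R |v_0(y)|\, \Psi(y,t)\,dy, \qquad \Psi(y,t) := \int_{-R-LT-y}^{R+LT-y} K_\varepsilon(z,t)\,dz,$$
noting that $0 \leq \Psi \leq 1$ because $K_\varepsilon(\cdot,t)$ is a probability density.

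Next, I split the $y$-integral along $\{|y| \leq R+LT+r\}$ and $\{|y| > R+LT+r\}$. On the first region, the bound $\Psi \leq 1$ produces exactly the term $\int_{-R-LT-r}^{R+LT+r} |v_0(y)|\,dy$ of the lemma. On the second region, the integration interval $[-R-LT-y, R+LT-y]$ defining $\Psi(y,t)$ lies entirely in $\{|z|>r\}$, so $\Psi(y,t) \leq \int_{|z|>r} K_\varepsilon(z,t)\,dz$; applying Fubini once more yields $\int_{|y| > R+LT+r} \Psi(y,t)\,dy \leq 2(R+LT) \int_{|z|>r} K_\varepsilon(z,t)\,dz$. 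Combining these estimates, the remainder is at most $\|v_0\|_\infty\,\omega(1/r)$ with
$$\omega(1/r) := 2(R+LT) \sup_{t \in [0,T],\, \varepsilon \in (0,1]} \int_{|z|>r} K_\varepsilon(z,t)\,dz.$$

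To verify $\omega(s) \to 0$ as $s \to 0^+$, I use the convolution-tail subadditivity $\int_{|z|>r} K_\varepsilon(z,t)\,dz \leq \int_{|z|>r/2} p_\alpha(z,t)\,dz + \int_{|z|>r/2} p_2(z,\varepsilon t)\,dz$ together with the scaling \eqref{homogeneity} and the decay \eqref{EP}: these tails are dominated uniformly in $t \in [0,T]$ and $\varepsilon \in (0,1]$ by $\int_{|s|>r/(2T^{1/\alpha})} P_\alpha(s)\,ds + \int_{|s|>r/(2\sqrt{T})} P_2(s)\,ds$, which vanishes as $r \to +\infty$ by dominated convergence.

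The hard part will be the prefactor $2(R+LT)$, which forces $\omega$ to depend on $R$, $T$, and $L$ in addition to $r$. A truly universal $\omega$ seems unreachable by this direct route, since $K_\varepsilon(\cdot,t)$ spreads in time and the $\alpha$-stable density has no first moment when $\alpha \leq 1$; accordingly, the quantity $\min(|z|-r, 2(R+LT))$ that arises naturally from Fubini cannot be tamed without this growing prefactor or additional decay on $v_0$. This limitation is nevertheless inoffensive for the application in the proof of Theorem~\ref{thm:convg}, where $R$, $T$, and $L$ are fixed before the limit $r \to +\infty$ is taken, so I will state and use $\omega$ with this implicit dependence on the problem parameters.
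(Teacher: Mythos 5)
Your proof is correct and follows essentially the same route as the paper's: both rest on Fubini/Young, a near/far splitting of the kernel mass at a scale comparable to $r$, and the self-similarity \eqref{homogeneity} to obtain tail bounds uniform in $t\in[0,T]$ and $\varepsilon\in(0,1]$; the paper merely peels off $p_\alpha(t)$ and $p_2(\varepsilon s)$ in two successive steps (each enlarging the window by $r/2$), whereas you treat the composite kernel at once and split its tail afterwards. Your closing caveat is consistent with the paper's own construction, whose modulus $\omega_\alpha(1/r)=(2R+2LT)\int_{|x|\ge T^{-1/\alpha}r/2}p_\alpha(x,1)\,dx$ carries the same dependence on $R$, $L$ and $T$, which is indeed harmless in the application to Theorem~\ref{thm:convg}.
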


\Proof First, we write
\begin{equation}\label{est:comp-tech}
\begin{split}
  \sup_{t \in [0,T]} \int_{-R-Lt}^{R+Lt} S_\alpha^\varepsilon(t) &|v_0|(x)\,dx \\
  & =  \sup_{t \in [0,T]} \int_{-R-Lt}^{R+Lt} p_\alpha(t) \ast p_2(\varepsilon t) \ast |v_0|(x)\,dx \\
  & \leq \sup_{s \in [0,T]} \sup_{t \in [0,T]} \int_{-R-Lt}^{R+Lt}
  p_\alpha(t) \ast p_2(\varepsilon s) \ast |v_0|(x)\,dx.
\end{split}
\end{equation}
Now, for every  $s \in [0,T]$, we estimate  from above the following function
$$
M(s)\equiv \sup_{t \in [0,T]} \int_{-R-Lt}^{R+Lt} p_\alpha(t) \ast w_0(x)\,dx,
$$
where $w_0\equiv p_2(\varepsilon s) \ast |v_0|$.  Using properties of
the kernel $p_\alpha$ and its self-similarity (see \eqref{homogeneity}) we
obtain
\begin{equation*}
\begin{split}
  \int_{-R-Lt}^{R+Lt} p_\alpha(t) \ast w_0(x)\,dx
  =&   \int_{|x| \leq R+Lt} \int_{|y| \leq r/2} p_\alpha(y,t)w_0(x-y) \,dx dy\\
  &  +\int_{|x| \leq R+Lt} \int_{|y| \geq r/2} p_\alpha(y,t)w_0(x-y)\, dx dy\\
  \leq & \|p_\alpha(t)\|_1 \int_{-R-Lt-r/2}^{R+Lt+r/2} |w_0(x)|\,dx \\
  & + \|w_0\|_\infty 2(R+Lt) \int_{|y| \geq r/2} p_\alpha(y,t)\, dy\\
  = & \int_{-R-Lt-r/2}^{R+Lt+r/2} |w_0(x)|\,dx\\
  &+ \|w_0\|_\infty 2(R+Lt) \int_{|x| \geq t^{-\frac{1}{\alpha}} r/2}
  p_\alpha(x,1)\,dx.
\end{split}
\end{equation*}
Computing the supremum with respect to $t \in [0,T]$ we infer that
$$
M(s) \leq \int_{-R-LT-r/2}^{R+LT+r/2} |w_0(x)|\,dx+ 
\|w_0\|_\infty \omega_\alpha(1/r),
$$
where $\omega_\alpha:[0,+\infty)\to (0,+\infty)$ is defined by
$$
\omega_\alpha(1/r)\equiv (2R+2LT) \int_{|x| \geq T^{-\frac{1}{\alpha}}
  r/2} p_\alpha(x,1)\,dx.
$$ 
It is clear that the modulus of continuity $\omega_\alpha$ is
non-decreasing and satisfies 
$$ 
\lim_{r \rightarrow +\infty} \omega_\alpha(1/r)=0.
$$ 
Finally, since
$\|w_0\|_\infty=\|p_2(\varepsilon s) \ast |v_0| \|_\infty \leq
\|v_0\|_\infty$, we obtain
$$
M(s) \leq \int_{-R-LT-r/2}^{R+LT+r/2} |w_0(x)|\,dx
+ \|v_0\|_\infty \omega_\alpha(1/r).
$$

Analogous  computations show now that
\begin{equation*}
\begin{split}
  \int_{-R-LT-r/2}^{R+LT+r/2} |w_0(x)|\,dx
  & =  \int_{-R-LT-r/2}^{R+LT+r/2} p_2(\varepsilon s) \ast |v_0|(x)\,dx\\
  & \leq \int_{-R-LT-r}^{R+LT+r} |v_0(x)|\,dx
  +\|v_0\|_\infty \omega_2(\sqrt{\varepsilon}/r)\\
  & \leq \int_{-R-LT-r}^{R+LT+r} |v_0(x)|\,dx+\|v_0\|_\infty
  \omega_2(1/r),
\end{split}
\end{equation*}
because $\varepsilon \leq 1$. 

Finally, with the new modulus of continuity
$\omega\left(1/r\right)\equiv \omega_\alpha(1/r)+\omega_2(1/r), $ we
have
$$
M(s) \leq \int_{-R-LT-r}^{R+LT+r} |v_0(x)|\,dx+\|v_0\|_\infty \omega(1/r).
$$
Coming back to inequality  \eqref{est:comp-tech}, we complete the proof of
 Lemma \ref{lem:translation}. 
\endProof
 
%-------------------------------------------------------------------
\begin{lemma}\label{lem:taylor}
  Let~$I$ be an open interval of~$\R$ and~$u \in W^{1,\infty}(I)$ be
  such that~$u_x \in BV(I)$. Then, for a.e.~$x \in I$ and all~$z \in
  I-x$, we have
$$
u(x+z)=u(x)+u_x(x) z+\int_{I_{x,z}} \; |x+z-y| \; u_{yy}(dy),
$$
where~$I_{x,z}\equiv(x,x+z)$ if~$z>0$ and~$I_{x,z}\equiv (x+z,x)$ if
not.
\end{lemma}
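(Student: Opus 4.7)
\textbf{Proof plan for Lemma~\ref{lem:taylor}.} The identity is the integration-by-parts (Taylor with integral remainder) formula, extended from the $C^2$ case to the case $u_x \in BV(I)$. My plan is to reduce it to two applications of the fundamental theorem of calculus---one for the absolutely continuous function $u$ and one for the $BV$ function $u_x$---and then to swap the order of integration via Fubini.

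First, since $u \in W^{1,\infty}(I)$, for every $x \in I$ and every admissible $z$ one has the absolutely continuous representation
\begin{equation*}
u(x+z)-u(x)=\int_{I_{x,z}} u_y(y)\,dy,
\end{equation*}
with the convention $\int_{I_{x,z}}=\int_x^{x+z}$ if $z>0$ and $=-\int_{x+z}^{x}$ if $z<0$. Second, since $u_x \in BV(I)$, its distributional derivative $u_{yy}$ is a finite signed Radon measure on $I$, and there is a \emph{good representative} $\bar{u}_x$ of $u_x$ (e.g. the right-continuous one) satisfying $\bar{u}_x(b)-\bar{u}_x(a)=u_{yy}((a,b])$ for all $a<b$ in $I$; this good representative coincides with $u_x$ almost everywhere. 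Fix $x\in I$ belonging to the set of full measure where $u_x(x)=\bar{u}_x(x)$.

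Treating first the case $z>0$: substitute $u_y(y)=u_x(x)+\int_{(x,y]} u_{ww}(dw)$ into the first displayed formula to get
\begin{equation*}
u(x+z)-u(x)=u_x(x)\,z+\int_x^{x+z}\!\!\int_{(x,y]} u_{ww}(dw)\,dy.
\end{equation*}
Since the total variation $|u_{yy}|$ is finite on any bounded subinterval and $(w,y)\mapsto \mathbf{1}_{\{x<w\le y\le x+z\}}$ is measurable, Fubini's theorem applied to the Jordan decomposition $u_{yy}=u_{yy}^+-u_{yy}^-$ gives
\begin{equation*}
\int_x^{x+z}\!\!\int_{(x,y]} u_{ww}(dw)\,dy
=\int_{(x,x+z)}\!\!\left(\int_w^{x+z} dy\right) u_{ww}(dw)
=\int_{(x,x+z)}(x+z-w)\,u_{ww}(dw).
\end{equation*}
Since $x+z-w\ge 0$ on $(x,x+z)$, we may replace $(x+z-w)$ by $|x+z-w|$; moreover the singleton $\{x\}$ and the endpoint $\{x+z\}$ contribute nothing to the integrand $|x+z-w|$ at $w=x+z$ and to the value at $w=x$ is absorbed because it only affects a null set for the Lebesgue integral, so writing the measure integral over $I_{x,z}=(x,x+z)$ is legitimate. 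The case $z<0$ is handled analogously by writing $u_y(y)=u_x(x)-\int_{(y,x]} u_{ww}(dw)$ and performing the same Fubini swap, after which the sign works out to give $|x+z-y|$ again.

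The main subtlety (not a deep obstacle but the only point requiring care) is the choice of the ``good'' representative of $u_x$: the identity $\bar{u}_x(y)-u_x(x)=\int_{(x,y]} u_{ww}(dw)$ holds only for $x$ in a set of full Lebesgue measure, namely the set where $u_x$ agrees with~$\bar{u}_x$; this is exactly why the conclusion is stated for a.e. $x\in I$. Once this choice is fixed, the Fubini step is straightforward because $|u_{yy}|(I_{x,z})<+\infty$ and the integrand is nonnegative after decomposing into positive and negative parts.
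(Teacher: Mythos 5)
Your argument is correct and essentially the paper's own proof: both fix the good (right-continuous) representative of $u_x$, use the fundamental theorem of calculus for the Lipschitz function $u$, and then account for the remainder via the measure $u_{yy}$ — your explicit Fubini swap is just the unpacked version of the paper's ``simple integration by parts'' for the Stieltjes integral. The only point to tighten is the case $z<0$, where Fubini yields the half-open interval $(x+z,x]$ rather than $I_{x,z}=(x+z,x)$; since the atoms of $u_{yy}$ form a countable (hence Lebesgue-null) set of points $x$, this discrepancy is absorbed into the ``a.e.~$x$'' of the statement, which you should say explicitly instead of the somewhat garbled remark about endpoints.
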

%-------------------------------------------------------------------
\begin{proof}
  We can reduce to the case $I=(a,b)$ with $a,b \in \R$. 
  Let us assume without loss of generality that~$z>0$. Since~$u_x \in
  BV(I)$, the function~$\widetilde{u}_x(x)\equiv c+\int_{(a,x]}
  u_{yy}(dy)$ is an a.e. representative of~$u_x$, where~$c$ is the trace
  of~$u_x$ on the left boundary of~$I$. The trace of~$u_x \in
  BV(I_{x,z})$ onto~$\{x\}$ is equal to~$\widetilde{u}_x(x)$,
  because~$\{x\}$ is the left boundary of~$I_{x+z}$. Simple
  integration by parts formulas now give
\begin{eqnarray*}
u(x+z) & = & u(x)+\int_{I_{x,z}} u_y(y) dy\\
 & = & u(x)-\int_{I_{x,z}} (y-x-z) u_{yy}(dy)+\widetilde{u}_x(x)z.
\end{eqnarray*}
The proof is complete.
\end{proof}

\end{document}